\newcommand{\tun}{\begin{picture}(5,0)(-2,-1)
\put(0,0){\circle*{2}}
\end{picture}}
\newcommand{\tdeux}{\begin{picture}(7,7)(0,-1)
\put(3,0){\circle*{2}}
\put(3,5){\circle*{2}}
\put(3,0){\line(0,1){5}}
\end{picture}}
\newcommand{\ttroisun}{\begin{picture}(15,12)(-5,-1)
\put(3,0){\circle*{2}}
\put(6,7){\circle*{2}}
\put(0,7){\circle*{2}}
\put(-0.65,0){$\vee$}
\end{picture}}
\newcommand{\ttroisdeux}{\begin{picture}(5,15)(-2,-1)
\put(0,0){\circle*{2}}
\put(0,5){\circle*{2}}
\put(0,10){\circle*{2}}
\put(0,0){\line(0,1){5}}
\put(0,5){\line(0,1){5}}
\end{picture}}
\newcommand{\tquatreun}{\begin{picture}(15,12)(-5,-1)
\put(3,0){\circle*{2}}
\put(6,7){\circle*{2}}
\put(0,7){\circle*{2}}
\put(3,7){\circle*{2}}
\put(-0.65,0){$\vee$}
\put(3,0){\line(0,1){7}}
\end{picture}}
\newcommand{\tquatredeux}{\begin{picture}(15,18)(-5,-1)
\put(3,0){\circle*{2}}
\put(6,7){\circle*{2}}
\put(0,7){\circle*{2}}
\put(0,14){\circle*{2}}
\put(-0.65,0){$\vee$}
\put(0,7){\line(0,1){7}}
\end{picture}}
\newcommand{\tquatretrois}{\begin{picture}(15,18)(-5,-1)
\put(3,0){\circle*{2}}
\put(6,7){\circle*{2}}
\put(0,7){\circle*{2}}
\put(6,14){\circle*{2}}
\put(-0.65,0){$\vee$}
\put(6,7){\line(0,1){7}}
\end{picture}}
\newcommand{\tquatrequatre}{\begin{picture}(15,18)(-5,-1)
\put(3,5){\circle*{2}}
\put(6,12){\circle*{2}}
\put(0,12){\circle*{2}}
\put(3,0){\circle*{2}}
\put(-0.65,5){$\vee$}
\put(3,0){\line(0,1){5}}
\end{picture}}
\newcommand{\tquatrecinq}{\begin{picture}(9,19)(-2,-1)
\put(0,0){\circle*{2}}
\put(0,5){\circle*{2}}
\put(0,10){\circle*{2}}
\put(0,15){\circle*{2}}
\put(0,0){\line(0,1){5}}
\put(0,5){\line(0,1){5}}
\put(0,10){\line(0,1){5}}
\end{picture}}
\newcommand{\tdun}[1]{\begin{picture}(10,5)(-2,-1)
\put(0,0){\circle*{2}}
\put(3,-2){\tiny #1}
\end{picture}}
\newcommand{\tddeux}[2]{\begin{picture}(12,5)(0,-1)
\put(3,0){\circle*{2}}
\put(3,5){\circle*{2}}
\put(3,0){\line(0,1){5}}
\put(6,-2){\tiny #1}
\put(6,3){\tiny #2}
\end{picture}}
\newcommand{\tdtroisun}[3]{\begin{picture}(20,12)(-5,-1)
\put(3,0){\circle*{2}}
\put(6,7){\circle*{2}}
\put(0,7){\circle*{2}}
\put(-0.65,0){$\vee$}
\put(5,-2){\tiny #1}
\put(9,5){\tiny #2}
\put(-5,5){\tiny #3}
\end{picture}}
\newcommand{\tdtroisdeux}[3]{\begin{picture}(12,15)(-2,-1)
\put(0,0){\circle*{2}}
\put(0,5){\circle*{2}}
\put(0,10){\circle*{2}}
\put(0,0){\line(0,1){5}}
\put(0,5){\line(0,1){5}}
\put(3,-2){\tiny #1}
\put(3,3){\tiny #2}
\put(3,9){\tiny #3}
\end{picture}}
\newcommand{\tdquatreun}[4]{\begin{picture}(20,12)(-5,-1)
\put(3,0){\circle*{2}}
\put(6,7){\circle*{2}}
\put(0,7){\circle*{2}}
\put(3,7){\circle*{2}}
\put(-0.6,0){$\vee$}
\put(3,0){\line(0,1){7}}
\put(5,-2){\tiny #1}
\put(8,5){\tiny #2}
\put(1,10){\tiny #3}
\put(-6,5){\tiny #4}
\end{picture}}
\newcommand{\hdeux}{\begin{picture}(12,8)(-3,-1)
\textcolor{red}{\put(0,0){\circle*{2}}
\put(5,0){\circle*{2}}
\put(0,0){\line(1,0){5}}}
\end{picture}}
\newcommand{\htroisun}{\begin{picture}(12,8)(-3,-1)
\put(3,0){\circle*{2}}
\put(-0.6,0.1){$\vee$}
\textcolor{red}{\put(6,7){\circle*{2}}
\put(0,7){\circle*{2}}
\put(0,7){\line(1,0){5}}}
\end{picture}}
\newcommand{\htroisdeux}{\begin{picture}(12,8)(-3,-1)
\put(0,5){\circle*{2}}
\put(0,0){\line(0,1){5}}
\textcolor{red}{\put(0,0){\circle*{2}}
\put(5,0){\circle*{2}}
\put(0,0){\line(1,0){5}}}
\end{picture}}
\newcommand{\htroistrois}{\begin{picture}(12,8)(-3,-1)
\put(5,5){\circle*{2}}
\put(5,0){\line(0,1){5}}
\textcolor{red}{\put(5,0){\circle*{2}}
\put(0,0){\circle*{2}}
\put(0,0){\line(1,0){5}}}
\end{picture}}
\newcommand{\htroisquatre}{\begin{picture}(17,0)(-3,-1)
\textcolor{red}{\put(0,0){\circle*{2}}
\put(5,0){\circle*{2}}
\put(10,0){\circle*{2}}
\put(0,0){\line(1,0){10}}}
\end{picture}}
\newcommand{\hquatreun}{\begin{picture}(12,12)(-3,-1)
\put(3,0){\circle*{2}}
\put(6,7){\circle*{2}}
\put(-0.5,0.1){$\vee$}
\put(3,0){\line(0,1){7}}
\textcolor{red}{\put(0,7){\line(1,0){2}}
\put(0,7){\circle*{2}}
\put(3,7){\circle*{2}}}
\end{picture}}
\newcommand{\hquatredeux}{\begin{picture}(12,12)(-3,-1)
\put(3,0){\circle*{2}}
\put(0,7){\circle*{2}}
\put(-0.5,0.1){$\vee$}
\put(3,0){\line(0,1){7}}
\textcolor{red}{\put(3,7){\line(1,0){2}}
\put(6,7){\circle*{2}}
\put(3,7){\circle*{2}}}
\end{picture}}
\newcommand{\hquatretrois}{\begin{picture}(12,12)(-3,-1)
\put(3,0){\circle*{2}}
\put(-0.5,0.1){$\vee$}
\put(3,0){\line(0,1){7}}
\textcolor{red}{\put(0,7){\line(1,0){5}}
\put(6,7){\circle*{2}}
\put(0,7){\circle*{2}}
\put(3,7){\circle*{2}}}
\end{picture}}
\newcommand{\hquatrequatre}{\begin{picture}(12,18)(-3,-1)
\put(3,0){\circle*{2}}
\put(0,14){\circle*{2}}
\put(-0.5,0.1){$\vee$}
\put(0,7){\line(0,1){7}}
\textcolor{red}{\put(0,7){\line(1,0){5}}
\put(6,7){\circle*{2}}
\put(0,7){\circle*{2}}}
\end{picture}}
\newcommand{\hquatrecinq}{\begin{picture}(12,18)(-3,-1)
\put(3,0){\circle*{2}}
\put(6,14){\circle*{2}}
\put(-0.5,0.1){$\vee$}
\put(6,7){\line(0,1){7}}
\textcolor{red}{\put(0,7){\line(1,0){5}}
\put(6,7){\circle*{2}}
\put(0,7){\circle*{2}}}
\end{picture}}
\newcommand{\hquatresix}{\begin{picture}(12,18)(-3,-1)
\put(3,5){\circle*{2}}
\put(3,0){\circle*{2}}
\put(-0.5,5.1){$\vee$}
\put(3,0){\line(0,1){5}}
\textcolor{red}{\put(0,12){\line(1,0){5}}
\put(6,12){\circle*{2}}
\put(0,12){\circle*{2}}}
\end{picture}}
\newcommand{\hquatresept}{\begin{picture}(18,8)(-5,-1)
\put(6,7){\circle*{2}}
\put(0,7){\circle*{2}}
\put(-0.6,0.1){$\vee$}
\textcolor{red}{\put(3,0){\line(1,0){5}}
\put(8,0){\circle*{2}}
\put(3,0){\circle*{2}}}
\end{picture}}
\newcommand{\hquatrehuit}{\begin{picture}(18,8)(-5,-1)
\put(9,7){\circle*{2}}
\put(3,7){\circle*{2}}
\put(2.4,0.1){$\vee$}
\textcolor{red}{\put(0,0){\line(1,0){5}}
\put(0,0){\circle*{2}}
\put(6,0){\circle*{2}}}
\end{picture}}
\newcommand{\hquatreneuf}{\begin{picture}(12,12)(-3,-1)
\put(0,10){\circle*{2}}
\put(0,5){\circle*{2}}
\put(0,0){\line(0,1){5}}
\put(0,5){\line(0,1){5}}
\textcolor{red}{\put(0,0){\line(1,0){5}}
\put(0,0){\circle*{2}}
\put(5,0){\circle*{2}}}
\end{picture}}
\newcommand{\hquatredix}{\begin{picture}(12,12)(-3,-1)
\put(5,5){\circle*{2}}
\put(5,10){\circle*{2}}
\put(5,0){\line(0,1){5}}
\put(5,5){\line(0,1){5}}
\textcolor{red}{\put(0,0){\line(1,0){5}}
\put(0,0){\circle*{2}}
\put(5,0){\circle*{2}}}
\end{picture}}
\newcommand{\hquatreonze}{\begin{picture}(18,8)(-5,-1)
\put(-0.6,0.1){$\vee$}
\textcolor{red}{\put(3,0){\line(1,0){5}}
\put(3,0){\circle*{2}}
\put(8,0){\circle*{2}}}
\textcolor{blue}{\put(-3.5,7){\line(1,0){5}}
\put(2.5,7){\circle*{2}}
\put(-3.5,7){\circle*{2}}}
\end{picture}}
\newcommand{\hquatredouze}{\begin{picture}(12,8)(-3,-1)
\put(2.4,0.1){$\vee$}
\textcolor{red}{\put(0,0){\line(1,0){5}}
\put(6,0){\circle*{2}}
\put(0,0){\circle*{2}}}
\textcolor{blue}{\put(-.5,7){\line(1,0){5}}
\put(5.5,7){\circle*{2}}
\put(-.5,7){\circle*{2}}}
\end{picture}}
\newcommand{\hquatretreize}{\begin{picture}(12,8)(-3,-1)
\put(0,5){\circle*{2}}
\put(5,5){\circle*{2}}
\put(5,0){\line(0,1){5}}
\put(0,0){\line(0,1){5}}
\textcolor{red}{\put(0,0){\line(1,0){5}}
\put(0,0){\circle*{2}}
\put(5,0){\circle*{2}}}
\end{picture}}
\newcommand{\hquatrequatorze}{\begin{picture}(17,8)(-3,-1)
\put(0,5){\circle*{2}}
\put(0,0){\line(0,1){5}}
\textcolor{red}{\put(0,0){\line(1,0){10}}
\put(0,0){\circle*{2}}
\put(5,0){\circle*{2}}
\put(10,0){\circle*{2}}}
\end{picture}}
\newcommand{\hquatrequinze}{\begin{picture}(17,8)(-2,-1)
\put(5,5){\circle*{2}}
\put(5,0){\line(0,1){5}}
\textcolor{red}{\put(0,0){\line(1,0){10}}
\put(0,0){\circle*{2}}
\put(5,0){\circle*{2}}
\put(10,0){\circle*{2}}}
\end{picture}}
\newcommand{\hquatreseize}{\begin{picture}(17,8)(-3,-1)
\put(10,5){\circle*{2}}
\put(10,0){\line(0,1){5}}
\textcolor{red}{\put(0,0){\line(1,0){10}}
\put(0,0){\circle*{2}}
\put(5,0){\circle*{2}}
\put(10,0){\circle*{2}}}
\end{picture}}
\newcommand{\hquatredixsept}{\begin{picture}(21,8)(-3,-1)
\textcolor{red}{\put(0,0){\circle*{2}}
\put(5,0){\circle*{2}}
\put(10,0){\circle*{2}}
\put(15,0){\circle*{2}}
\put(0,0){\line(1,0){15}}}
\end{picture}}
\newcommand{\hddeux}[2]{\begin{picture}(16,8)(-5,-1)
\textcolor{red}{\put(0,0){\circle*{2}}
\put(5,0){\circle*{2}}
\put(0,0){\line(1,0){5}}}
\put(-5,-2){\tiny #1}
\put(7,-2){\tiny #2}
\end{picture}}
\newcommand{\hdtroisun}[3]{\begin{picture}(20,12)(-5,-1)
\put(3,0){\circle*{2}}
\put(-0.6,0.1){$\vee$}
\put(5,-2){\tiny #1}
\put(9,5){\tiny #2}
\put(-5,5){\tiny #3}
\textcolor{red}{\put(6,7){\circle*{2}}
\put(0,7){\circle*{2}}
\put(0,7){\line(1,0){5}}}
\end{picture}}
\newcommand{\hdquatreun}[4]{\begin{picture}(20,12)(-5,-1)
\put(3,0){\circle*{2}}
\put(6,7){\circle*{2}}
\put(-0.5,0.1){$\vee$}
\put(3,0){\line(0,1){7}}
\textcolor{red}{\put(0,7){\line(1,0){2}}
\put(0,7){\circle*{2}}
\put(3,7){\circle*{2}}}
\put(5,-2){\tiny #1}
\put(8,5){\tiny #2}
\put(1,10){\tiny #3}
\put(-6,5){\tiny #4}
\end{picture}}
\newcommand{\hdquatretrois}[4]{\begin{picture}(20,12)(-5,-1)
\put(3,0){\circle*{2}}
\put(-0.5,0.1){$\vee$}
\put(3,0){\line(0,1){7}}
\textcolor{red}{\put(0,7){\line(1,0){5}}
\put(6,7){\circle*{2}}
\put(0,7){\circle*{2}}
\put(3,7){\circle*{2}}}
\put(5,-2){\tiny #1}
\put(8,5){\tiny #2}
\put(1,10){\tiny #3}
\put(-6,5){\tiny #4}
\end{picture}}
\title{Cofree Com-PreLie algebras}
\date{}
\author{Lo\"\i c Foissy\\ \\
{\small \it Fédération de Recherche Mathématique du Nord Pas de Calais FR 2956}\\
{\small \it Laboratoire de Mathématiques Pures et Appliquées Joseph Liouville}\\
{\small \it Université du Littoral Côte dOpale-Centre Universitaire de la Mi-Voix}\\ 
{\small \it 50, rue Ferdinand Buisson, CS 80699,  62228 Calais Cedex, France}\\ \\
{\small \it Email: foissy@univ-littoral.fr}}
\theoremstyle{plain}
\newtheorem{theo}{Theorem}[section]
\newtheorem{lemma}[theo]{Lemma}
\newtheorem{cor}[theo]{Corollary}
\newtheorem{prop}[theo]{Proposition}
\newtheorem{defi}[theo]{Definition}
\theoremstyle{remark}
\newtheorem{remark}{Remark}[section]
\newtheorem{notation}{Notations}[section]
\newtheorem{example}{Example}[section]
\renewcommand{\geq}{\geqslant}
\renewcommand{\leq}{\leqslant}
\newcommand{\RF}{\mathcal{RF}}
\newcommand{\K}{\mathbb{K}}
\newcommand{\tdelta}{\tilde{\Delta}}
\newcommand{\T}{\mathcal{T}}
\newcommand{\D}{\mathcal{D}}
\newcommand{\PT}{\mathcal{PT}}
\newcommand{\UPT}{\mathcal{UPT}}
\newcommand{\g}{\mathfrak{g}}
\renewcommand{\PT}{\mathcal{PT}}
\newcommand{\N}{\mathbb{N}}
\newcommand{\h}{\mathcal{H}}
\newcommand{\Id}{\mathcal{I}d}
\newcommand{\id}{\mathrm{Id}}
\newcommand{\prim}{\mathrm{Prim}}
\newcommand{\vect}{\mathrm{Vect}}
\renewcommand{\ker}{\mathrm{Ker}}
\newcommand{\UCP}{\mathit{UCP}}
\newcommand{\CP}{\mathit{CP}}
\newcommand{\Sh}{\mathit{Sh}}
\newcommand{\sh}{\mathcal{S}h}
\newcommand{\CK}{\mathit{CK}}
\newcommand{\bl}{\mathcal{B}l}
\newcommand{\st}{\mathcal{S}t}
\newcommand{\morp}{\mathrm{End}}
\begin{document}

\maketitle

\begin{abstract}
A Com-PreLie bialgebra is a commutative bialgebra with an extra preLie product satisfying some compatibilities with the product and the coproduct.
We here give examples of cofree Com-PreLie bialgebras, including all the ones such that the preLie product is homogeneous of degree $\geq -1$. 
We also give a graphical description of free unitary Com-PreLie algebras, explicit their canonical bialgebra structure
and exhibit with the help of a rigidity theorem certain cofree quotients, including the Connes-Kreimer Hopf algebra of rooted trees.
We finally prove that the dual of these bialgebras are also enveloping algebras of preLie algebras, combinatorially described.
\end{abstract}

\textbf{AMS classification}. 17D25 16T05 05C05

\tableofcontents

\section*{Introduction}

Com-PreLie bialgebras, introduced in \cite{Foissy28,Foissy30}, are commutative bialgebras with an extra preLie product, compatible
with the product and coproduct, see Definition \ref{defi1} below. They appeared in Control Theory, as the Lie algebra of the group of Fliess operators
\cite{Gray2011} naturally owns a Com-PreLie bialgebra structure, and its underlying bialgebra is a shuffle Hopf algebra.
Free (non unitary) Com-PreLie bialgebras were also described, in terms of partitioned rooted trees.

We here give examples of connected cofree Com-PreLie bialgebras. As cocommutative cofree bialgebras are, up to isomorphism, shuffle algebras
$\Sh(V)=(T(V),\shuffle,\Delta)$, where $V$ is the space of primitive elements, we firstly characterize Com-PreLie bialgebras structures on $\Sh(V)$
in term of operators $\varpi:T(V)\otimes T(V)\longrightarrow V$, satisfying two identities, see Proposition \ref{prop9}. In particular,
if we assume that the obtained preLie bracket is homogeneous of degree $0$ for the graduation of $\Sh(V)$ by the length, then $\varpi$ is reduced to a linear map $f:V\longrightarrow V$,
and the obtained preLie product is given by (Proposition \ref{propf}):
\begin{align*}
&\forall x_1,\ldots,x_m,y_1,\ldots,y_n\in V,&
x_1\ldots x_m\bullet y_1\ldots y_n=\sum_{i=0}^n x_1\ldots x_{i-1}f(x_i)(x_{i+1}\ldots x_m\shuffle y_1\ldots y_n).
\end{align*}
In particular, if $V=\vect(x_0,x_1)$ and $f$ is defined by $f(x_0)=0$ and $f(x_1)=x_0$, we obtain the Com-PreLie bialgebra of Fliess operators
in dimension $1$. If we assume that the obtained preLie bracket is homogeneous of degree $-1$, then $\varpi$ is given by two bilinear products
$*$ and $\{-,-\}$ on $V$ such that $*$ is preLie, $\{-,-\}$ is antisymmetric and for all $x,y,z\in V$,
\begin{align*}
x*\{y,z\}&=\{x*y,z\},\\
\{x,y\}*z&=\{x*y,z\}+\{x,y*z\}+\{\{x,y\},z\}.
\end{align*}
This includes preLie products on $V$ when $\{-,-\}=0$ and nilpotent Lie algebras of nilpotency order $2$ when $*=0$, see Proposition \ref{prop11}.\\

We then extend the construction of free Com-PreLie algebras of \cite{Foissy28} in terms of partitioned trees (see Definition \ref{defiarbres})
to free unitary Com-PreLie algebras $\UCP(\D)$, with the help of a complementary decoration by integers. 
We obtain free Com-PreLie algebras $\CP(\D)$ as the augmentation ideal of a quotient of $\UCP(\D)$, the right action of the unit $\emptyset$
on the generators of $\UCP(\D)$ being arbitrarily chosen (Proposition \ref{prop16}). 
Recall that partitioned trees are rooted forests with an extra structure of a partition of its vertices into blocks;
forgetting the blocks, we obtain the Connes-Kreimer Hopf  algebra $\h_{\CK}$ of rooted trees \cite{Connes98},
which is given in this way a natural structure of Com-PreLie bialgebra (Proposition \ref{prop17}).
Using Livernet's rigidity theorem for preLie algebras, we prove that the augmentation ideals of $\UCP(\D)$ and $\CP(\D)$ are free as preLie algebras.
Theorem \ref{theo27} is a rigidity theorem which gives a simple criterion for a connected (as a coalgebra) Com-PreLie bialgebra to be cofree,
in terms of the right action of the unit on its primitive elements. Applied to $\CP(\D)$ and $\h_{\CK}$, it proves that they are isomorphic to shuffle
bialgebras, which was already known for $\h_{\CK}$. We also consider the dual Hopf algebras of $\UCP(\D)$ and $\CP(\D)$:
as these Hopf algebras are right-sided combinatorial in the sense of \cite{Loday2010}, there dual are enveloping algebras of other preLie algebras,
which we explicitly describe in Theorem \ref{theo31}, and then compare to the original Com-PreLie algebras.\\

This text is organized as follows: the first section contains reminders and lemmas on Com-PreLie algebras,
including the extension of the Guin-Oudom extension of the preLie product in the Com-PreLie case.
The second section deals with the characterization of preLie products on shuffle algebras.
In the next section contains the description of free unitary Com-PreLie algebras and two families of quotients,
whereas the fifth and last one contains results on the bialgebraic structures of these objects: existence of the coproduct,
the rigidity theorem \ref{theo27} and its applications, the dual preLie algebras, and an application to a family of subalgebras,
named Connes-Moscovici subalgebras. \\ 

\textbf{Acknowledgments}. The author thanks the anonymous referee for his helpful comments. 
The author acknowledges support from the grant ANR-20-CE40-0007 \emph{Combinatoire Algébrique, Résurgence, Probabilités Libres et Opérades}.\\

\begin{notation} \begin{enumerate}
\item Let $\K$ be a commutative field of characteristic zero. All the objects (vector spaces, algebras, coalgebras, preLie algebras$\ldots$)
in this text will be taken over $\K$.
\item For all $n\in \mathbb{N}$, we denote by $[n]$ the set $\{1,\ldots,n\}$. In particular, $[0]=\emptyset$.
\end{enumerate}\end{notation}

 \section{Reminders on Com-PreLie algebras}

Let $V$ be a vector space.
\begin{itemize}
\item We denote by $T(V)$ the tensor algebra of $V$. Its unit is the empty word, which we denote by $\emptyset$.
The element $v_1\otimes \ldots \otimes v_n \in V^{\otimes n}$, with $v_1,\ldots,v_n \in V$, will be shortly denoted by $v_1\ldots v_n$.
The deconcatenation coproduct of $T(V)$ is defined by
\begin{align*}
&\forall v_1,\ldots,v_n \in V,&\Delta(v_1\ldots v_n)&=\sum_{i=0}^n v_1\ldots v_i \otimes v_{i+1}\ldots v_n.
\end{align*}
The shuffle product of $T(V)$ is denoted by $\shuffle$. 
Recall that it can be inductively defined by
\begin{align*}
&\forall  x,y \in V,\: \forall u,v \in T(V),&\emptyset \shuffle v&=0,&xu\shuffle yv&=x(u\shuffle yv)+y(xu\shuffle v).
\end{align*}
For example, if $v_1,v_2,v_3,v_4\in V$,
\begin{align*}
v_1\shuffle v_2v_3v_4&=v_1v_2v_3v_4+v_2v_1v_3v_4+v_2v_3v_1v_4+v_2v_3v_4v_1,\\
v_1v_2\shuffle v_3v_4&=v_1v_2v_3v_4+v_1v_3v_2v_4+v_1v_3v_4v_2+v_3v_1v_2v_4+v_3v_1v_4v_2+v_3v_4v_1v_2,\\
v_1v_2v_3\shuffle v_4&=v_1v_2v_3v_4+v_1v_2v_4v_3+v_1v_2v_4v_3+v_1v_4v_2v_3+v_4v_1v_2v_3.
\end{align*}
$\Sh(V)=(T(V),\shuffle,\Delta)$ is a Hopf algebra, known as the shuffle algebra of $V$.
\item $S(V)$ is the symmetric algebra of $V$. It is a Hopf algebra, with the coproduct defined by
\begin{align*}
&\forall v\in V,&\Delta(v)&=v\otimes \emptyset+\emptyset\otimes v.
\end{align*}
\item $coS(V)$ is the subalgebra of $(T(V),\shuffle)$ generated by $V$. It is the greatest cocommutative Hopf subalgebra of $(T(V),\shuffle,\Delta)$,
and is isomorphic to $S(V)$ via the algebra morphism
\[\theta:\left\{\begin{array}{rcl}
(S(V),m,\Delta)&\longrightarrow&(coS(V),\shuffle,\Delta)\\
v_1\ldots v_k&\longrightarrow&v_1\shuffle \ldots \shuffle v_k.\\
\end{array}\right.\]
\end{itemize}

\subsection{Definitions}

\begin{defi}\label{defi1}
\begin{enumerate}
\item A \emph{Com-PreLie algebra} \cite{Foissy28,Foissy30}  is a family $A=(A,\cdot,\bullet)$, where $A$ is a vector space,
$\cdot$ and $\bullet$ are bilinear products on $A$, such that
\begin{align*}
&\forall a,b\in A,&a\cdot b&=b\cdot a,\\
&\forall a,b,c\in A,&(a\cdot b)\cdot c&=a\cdot (b\cdot c),\\
&\forall a,b,c\in A,&(a\bullet b)\bullet c-a\bullet(b\bullet c)&=(a\bullet c)\bullet b-a\bullet(c\bullet b)&\mbox{(preLie identity)},\\
&\forall a,b,c\in A,&(a\cdot b)\bullet c&=(a\bullet c)\cdot b+a\cdot (b\bullet c)&\mbox{(Leibniz identity)}.
\end{align*}
In particular, $(A,\cdot)$ is an associative, commutative algebra and $(A,\bullet)$ is a right preLie algebra.
We shall say that $A$ is unitary if the algebra $(A,\cdot)$ is unitary.
\item A \emph{Com-PreLie bialgebra} is a family $(A,\cdot,\bullet,\Delta)$, such that
\begin{enumerate}
\item $(A,\cdot,\bullet)$ is a unitary Com-PreLie algebra.
\item $(A,\cdot,\Delta)$ is a bialgebra.
\item For all $a,b\in A$,
\[\Delta(a\bullet b)=a^{(1)}\otimes a^{(2)}\bullet b+a^{(1)}\bullet b^{(1)}\otimes a^{(2)}\cdot b^{(2)},\]
with Sweedler's notation $\Delta(x)=x^{(1)}\otimes x^{(2)}$.
\end{enumerate}
\end{enumerate}\end{defi}

\begin{remark} If $(A,\cdot,\bullet,\Delta)$ is a Com-PreLie bialgebra, then for any $\lambda \in \K$, $(A,\cdot,\lambda \bullet,\Delta)$ also is. 
\end{remark}

\begin{lemma}\label{lem3} \begin{enumerate}
\item Let $(A,\cdot,\bullet)$ be a unitary Com-PreLie algebra. Its unit is denoted by $\emptyset$.
For all $a \in A$, $\emptyset\bullet a=0$.
\item  Let $A$ be a Com-PreLie bialgebra, with counit $\varepsilon$. For all $a,b \in A$, $\varepsilon(a\bullet b)=0$.
\end{enumerate}\end{lemma}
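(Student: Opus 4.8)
The plan is to prove both statements by the same device: exhibiting for the relevant quantity $x$ an identity of the form $x=2x$, which forces $x=0$ since $\K$ has characteristic zero.

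For the first part, I would specialize the Leibniz identity $(a\cdot b)\bullet c=(a\bullet c)\cdot b+a\cdot(b\bullet c)$ to the case $a=b=\emptyset$. Using that $\emptyset$ is the unit of $(A,\cdot)$, so that $\emptyset\cdot\emptyset=\emptyset$ and $x\cdot\emptyset=\emptyset\cdot x=x$ for all $x$, the left-hand side becomes $\emptyset\bullet c$ while the right-hand side becomes $(\emptyset\bullet c)+(\emptyset\bullet c)$. Hence $\emptyset\bullet c=2(\emptyset\bullet c)$, giving $\emptyset\bullet c=0$ for all $c\in A$.

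For the second part, I would start from the coproduct compatibility
\[\Delta(a\bullet b)=a^{(1)}\otimes a^{(2)}\bullet b+a^{(1)}\bullet b^{(1)}\otimes a^{(2)}\cdot b^{(2)}\]
and apply $\varepsilon\otimes\varepsilon$ to both sides. On the left, the counit relation $(\varepsilon\otimes\varepsilon)\circ\Delta=\varepsilon$ yields $\varepsilon(a\bullet b)$. On the right I would treat the two terms separately: for the first term, summing $\varepsilon(a^{(1)})a^{(2)}=a$ inside the bilinear product $\bullet$ and then applying $\varepsilon$ gives $\varepsilon(a\bullet b)$; for the second term, I would use that $\varepsilon$ is an algebra morphism, being the counit of the bialgebra $(A,\cdot,\Delta)$, to factor $\varepsilon(a^{(2)}\cdot b^{(2)})=\varepsilon(a^{(2)})\varepsilon(b^{(2)})$, and then the counit identities $\varepsilon(a^{(2)})a^{(1)}=a$ and $\varepsilon(b^{(2)})b^{(1)}=b$ collapse it again to $\varepsilon(a\bullet b)$. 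Thus $\varepsilon(a\bullet b)=2\,\varepsilon(a\bullet b)$, so $\varepsilon(a\bullet b)=0$.

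The computations are short; the only point requiring care is the bookkeeping with Sweedler's notation in the second part, in particular remembering that the counit of a bialgebra is an algebra morphism, so that it splits across the commutative product $\cdot$ in the last tensor factor. No serious obstacle is expected.
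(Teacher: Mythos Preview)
Your proof is correct and follows essentially the same approach as the paper: both parts use the $x=2x$ trick, the first via the Leibniz identity with $a=b=\emptyset$, and the second by applying $\varepsilon\otimes\varepsilon$ to the coproduct compatibility and using that $\varepsilon$ is multiplicative.
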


\begin{proof} 1. Indeed, $\emptyset\bullet a=(\emptyset\cdot\emptyset)\bullet a=(\emptyset\bullet a)\cdot\emptyset
+\emptyset\cdot(\emptyset\bullet a)=2(\emptyset\bullet a)$, so $\emptyset\bullet a=0$.\\

2. For all $a,b \in A$,
\begin{align*}
\varepsilon(a\bullet b)&=(\varepsilon \otimes \varepsilon)\circ \Delta(a\bullet b)\\
&=\varepsilon(a^{(1)})\varepsilon( a^{(2)}\bullet b)+\varepsilon(a^{(1)}\bullet b^{(1)})\varepsilon(a^{(2)}
\cdot b^{(2)})\\
&=\varepsilon(a^{(1)})\varepsilon( a^{(2)}\bullet b)
+\varepsilon(a^{(1)}\bullet b^{(1)})\varepsilon(a^{(2)})\varepsilon(b^{(2)})\\
&=\varepsilon(a\bullet b)+\varepsilon(a\bullet b),
\end{align*}
so $\varepsilon(a\bullet b)=0$. \end{proof}

\begin{remark} 
Let us give a few reminders on the (dual) Hochschild cohomology for coalgebras, also called Cartier-Quillen cohomology, see \cite{Connes98}.
Let $(C,\Delta)$ be a coalgebra, and $(M,\rho_L,\rho_R)$ be a bicomodule over $C$, with left coaction $\rho_L$
and right coaction $\rho_R$. A $n$-cochain is a map $L:M\longrightarrow C^{\otimes n}$. The coboundary $d$ is given on any $n$-cochain $n$ by
\[d(L)=(\id \otimes L)\circ \rho_L+\sum_{i=1}^n (\id^{\otimes (i-1)}\otimes \Delta\otimes \id^{\otimes (n-i)})\circ L+(-1)^{n-1}(L\otimes \id)\circ \rho_R.\]
In particular, if $(B,m,\Delta)$ is a bialgebra, we can consider the bicomodule $(M,\rho_L,\rho_R)$ defined by $M=B$, $\rho_R=\Delta$ and
\begin{align*}
&\forall x\in B,&\rho_L(x)&=1\otimes x.
\end{align*}
A 1-cocycle is then a map $L:B\longrightarrow B$ such that for any $x\in B$,
\[\Delta\circ L(x)=1\otimes L(x)+(L\otimes \id)\circ \Delta.\]
Observe that in any Com-PreLie bialgebra $A$, if $a$ is primitive, for any $b\in A$,
\begin{align}
\label{eqcocycle}
\Delta(a\bullet b)=\emptyset\otimes a\bullet b+a\bullet b^{(1)}\otimes b^{(2)}.
\end{align}
Therefore, the map $b\mapsto a\bullet b$ is a $1$-cocycle for this cohomology.
\end{remark}

\subsection{Linear endomorphism on primitive elements}

\begin{notation}
If $A$ is a bialgebra, we denote by $\prim(A)$ the space of its primitive elements. 
\end{notation}

\begin{prop} \label{prop4}
Let $A$ be a Com-PreLie bialgebra. Its unit is denoted by $\emptyset$. 
\begin{enumerate}
\item If $x \in \prim(A)$, then $x\bullet \emptyset\in \prim(A)$.
We denote by $f_A$ the map 
\[f_A:\left\{\begin{array}{rcl}
\prim(A)&\longrightarrow&\prim(A)\\
a&\longrightarrow&a\bullet \emptyset.
\end{array}\right.\]
\item $\prim(A)$ is a preLie subalgebra of $(A,\bullet)$ if, and only if, $f_A=0$.
\end{enumerate} \end{prop}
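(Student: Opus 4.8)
The plan is to prove both parts by direct computation of coproducts, leaning entirely on the formula recorded in the Remark following Lemma \ref{lem3}: for $a$ primitive and any $b$,
\[\Delta(a\bullet b)=\emptyset\otimes a\bullet b+a\bullet b^{(1)}\otimes b^{(2)}.\]
This formula already packages the two ingredients I need, namely $\emptyset\bullet c=0$ (Lemma \ref{lem3}.1) and the grouplikeness of $\emptyset$, so the remaining work is purely bookkeeping of Sweedler terms.

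For part 1, I would apply this formula with $a=x$ primitive and $b=\emptyset$. Since the unit is grouplike, $\Delta(\emptyset)=\emptyset\otimes\emptyset$, so the sum $a\bullet b^{(1)}\otimes b^{(2)}$ collapses to the single term $(x\bullet\emptyset)\otimes\emptyset$ (using that $\emptyset$ is the unit of $\cdot$). This yields $\Delta(x\bullet\emptyset)=\emptyset\otimes(x\bullet\emptyset)+(x\bullet\emptyset)\otimes\emptyset$, which is exactly the statement that $x\bullet\emptyset\in Prim(A)$, so $f_A$ is well-defined as a map $Prim(A)\longrightarrow Prim(A)$.

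For part 2, I would apply the same formula with $a=x$ and $b=y$ both primitive. Substituting $\Delta(y)=y\otimes\emptyset+\emptyset\otimes y$ and again using $\emptyset\bullet y=0$, the term $a\bullet b^{(1)}\otimes b^{(2)}$ splits as $(x\bullet y)\otimes\emptyset+(x\bullet\emptyset)\otimes y$, giving
\[\Delta(x\bullet y)=\emptyset\otimes(x\bullet y)+(x\bullet y)\otimes\emptyset+(x\bullet\emptyset)\otimes y.\]
Hence $x\bullet y$ is primitive precisely when the extra term $(x\bullet\emptyset)\otimes y$ vanishes. The equivalence then reads off immediately: if $f_A=0$ then $x\bullet\emptyset=0$ for all primitive $x$, the extra term is always zero, and $Prim(A)$ is stable under $\bullet$; conversely, if $Prim(A)$ is a preLie subalgebra, then $(x\bullet\emptyset)\otimes y=0$ in $A\otimes A$ for all primitive $x,y$.

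The only genuinely delicate point is the converse direction, where I must deduce $x\bullet\emptyset=0$ from the vanishing of $(x\bullet\emptyset)\otimes y$ without assuming a priori the existence of an independent nonzero primitive element. I would resolve this by specializing $y$ to the primitive element $x\bullet\emptyset$ itself (legitimate by part 1): if $x\bullet\emptyset\neq 0$ for some primitive $x$, then $(x\bullet\emptyset)\otimes(x\bullet\emptyset)\neq 0$, a contradiction. Thus $x\bullet\emptyset=0$ for every primitive $x$, i.e.\ $f_A=0$, completing the proof.
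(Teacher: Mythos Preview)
Your proof is correct and follows essentially the same route as the paper: both arguments compute $\Delta(x\bullet y)$ for primitive $x,y$ and isolate the extra term $f_A(x)\otimes y$. The only differences are cosmetic---you start from the Remark's packaged formula rather than the raw compatibility axiom---and that you are more explicit on the converse in part~2, where the paper simply asserts that $f_A(a)\otimes b=0$ for all primitive $a,b$ forces $f_A=0$, while you supply the missing justification by specializing $y=x\bullet\emptyset$.
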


\begin{proof} 1. Indeed, if $a$ is primitive,
\[\Delta(a\bullet \emptyset)=a\otimes \emptyset\bullet \emptyset+\emptyset\otimes a\bullet \emptyset
+a\bullet \emptyset\otimes \emptyset\cdot \emptyset+\emptyset\bullet \emptyset\otimes a\cdot \emptyset
=0+\emptyset\otimes \emptyset\bullet a+a\bullet \emptyset\otimes \emptyset+0,\]
so $a\bullet \emptyset$ is primitive.\\

2. Let $a,b \in \prim(A)$.
\begin{align*}
\Delta(a\bullet b)&=a\otimes \emptyset \bullet b+\emptyset\otimes a\bullet b+\emptyset\bullet \emptyset\otimes a\cdot b
+a\bullet \emptyset\otimes b+\emptyset\bullet b\otimes a+a\bullet b\otimes \emptyset\\
&=\emptyset\otimes a\bullet b+a\bullet b\otimes \emptyset+f_A(a)\otimes b. 
\end{align*}
Hence, $\prim(A)$ is a preLie subalgebra if, and only if, for any $a,b\in A$, $f_A(a)\otimes b=0$, 
that is to say if, and only if, $f_A=0$. \end{proof}

\subsection{Extension of the pre-Lie product}

Let $A$ be a Com-PreLie algebra. It is a Lie algebra, with the bracket defined by 
\begin{align*}
&\forall x,y\in A,& [x,y]&=x\bullet y-y\bullet x.
\end{align*}
We shall use the Oudom-Guin construction of its enveloping algebra \cite{Oudom2008,Oudom2005}. 
In order to avoid confusions, we shall denote by $\times$ the usual product of $S(A)$ and by $1$ its unit. 
We extend the preLie product $\bullet$ into a product from $S(A) \otimes S(A)$ into $S(A)$ by
\begin{itemize}
\item If $a_1,\ldots,a_k \in A$, $(a_1\times \ldots \times a_k) \bullet 1=a_1\times \ldots \times a_k$.
\item If $a,a_1,\ldots,a_k \in A$,
\[a\bullet (a_1\times\ldots \times a_k)=(a\bullet (a_1\times \ldots \times a_{k-1}))\bullet a_k
-\sum_{i=1}^{k-1}a\bullet (a_1\times \ldots\times (a_i \bullet a_k) \times\ldots \times a_{k-1}).\]
\item If $x,y,z\in S(A)$, $(x\times y)\bullet z=(x\bullet z^{(1)})\times (y\bullet z^{(2)})$,
where $\Delta(z)=z^{(1)}\otimes z^{(2)}$ is the usual coproduct of $S(A)$.
\end{itemize}

\begin{notation} If $c_1,\ldots,c_n \in A$ and $I=\{i_1,\ldots,i_k\}\subseteq [n]$, we put
\[\prod_{i\in I}^\times c_i=c_{i_1}\times \ldots \times c_{i_k}.\]
\end{notation}

\begin{prop} \label{prop6}
 \begin{enumerate}
\item Let $A$ be a Com-PreLie algebra. If $a,b,c_1,\ldots,c_n \in A$,
\[(a\cdot b)\bullet (c_1\times \ldots \times c_k)=\sum_{I\subseteq [n]} \left(a\bullet \prod_{i\in I}^\times c_i\right) \cdot
\left(b\bullet \prod_{i\notin I}^\times c_i\right).\]
\item Let $A$ be a Com-PreLie bialgebra. If $a,b_1,\ldots, b_n \in A$,
\[\Delta(a\bullet (b_1\times \ldots \times b_n))=\sum_{I\subseteq [n]} a^{(1)}\bullet\left(\prod_{i\in I}^\times b_i^{(1)}\right)\otimes 
\left(\prod_{i\in I} b_i^{(2)}\right)a^{(2)}\bullet \left(\prod_{i\notin I}^\times b_i\right).\]
\end{enumerate}\end{prop}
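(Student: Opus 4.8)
The plan is to prove both identities by induction on $n$, the only tool beyond the defining rules 1--3 of the Oudom-Guin extension being the recursion of rule 2, which lowers the number of factors. It is convenient to first reformulate part 1: writing $\Delta$ for the coproduct of $S(A)$, for which each $c_i$ is primitive, one has $\Delta(c_1\times\cdots\times c_n)=\sum_{I\subseteq[n]}\prod_{i\in I}^\times c_i\otimes\prod_{i\notin I}^\times c_i$, so part 1 is exactly the assertion $(a\cdot b)\bullet z=(a\bullet z^{(1)})\cdot(b\bullet z^{(2)})$ for $z=c_1\times\cdots\times c_n$. In this form it is the $\cdot$-analogue of rule 3.

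For part 1 the base cases $n=0$ (rule 1) and $n=1$ (the Leibniz identity) are immediate. For the step I would put $w=c_1\times\cdots\times c_{n-1}$ and apply rule 2 to $(a\cdot b)\bullet(w\times c_n)$, obtaining $((a\cdot b)\bullet w)\bullet c_n$ minus the $n-1$ correction terms in which one $c_j$ is replaced by $c_j\bullet c_n$. To the leading term I apply the induction hypothesis to expand $(a\cdot b)\bullet w$ as $\sum_{J\subseteq[n-1]}(a\bullet w_J)\cdot(b\bullet w_{\bar{J}})$ with $w_J=\prod_{i\in J}^\times c_i$, then the Leibniz identity to move $\bullet\,c_n$ across $\cdot$, and finally rule 2 read backwards to rewrite each $(a\bullet w_J)\bullet c_n$ as $a\bullet(w_J\times c_n)$ plus its own corrections. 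The honest terms recombine, after setting $I=J\cup\{n\}$ or $I=J$, into the sum over $I\subseteq[n]$ claimed; the corrections produced here cancel, with opposite sign, those coming from the subtracted terms evaluated by the induction hypothesis.

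For part 2 I would again induct on $n$, the base case $n=1$ being axiom (c) of Definition \ref{defi1}. For the step, apply rule 2 to $a\bullet(w\times b_n)$ and then $\Delta$; on the leading term $\Delta((a\bullet w)\bullet b_n)$ I use the coproduct compatibility with $X:=a\bullet w\in A$ as left entry, giving $X^{(1)}\otimes X^{(2)}\bullet b_n+X^{(1)}\bullet b_n^{(1)}\otimes X^{(2)}\cdot b_n^{(2)}$. Substituting the induction hypothesis for $\Delta(X)$ produces a right-hand factor of the shape $X^{(2)}=(\prod_{i\in J}b_i^{(2)})\cdot(a^{(2)}\bullet w_{\bar{J}})$, and here I invoke part 1 (equivalently the Leibniz identity) to distribute the actions $\bullet\,b_n$ and $\bullet\,b_n^{(1)}$ over this $\cdot$-product. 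The two summands of the compatibility correspond to the cases $n\notin I$ and $n\in I$ of the target formula: in the first, $b_n$ acts on the $a^{(2)}$-block, while in the second, $b_n^{(1)}$ is adjoined on the left and $b_n^{(2)}$ joins the right tensor factor. As before, the corrections coming from the inner uses of rule 2 cancel against $\Delta$ of the subtracted terms $a\bullet(w|_{b_j\to b_j\bullet b_n})$.

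In both parts the main obstacle is purely combinatorial: keeping track of the correction terms generated each time rule 2 is applied backwards and checking that they cancel once the index sets are reindexed. No identity beyond rules 1--3, the Leibniz identity and the coproduct compatibility is required, and part 2 relies on part 1 precisely to handle the action of $\bullet\,b_n$ on the $\cdot$-products appearing in the right-hand tensor factor.
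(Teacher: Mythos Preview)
Your proposal is correct and matches the paper's approach: the paper's proof of this proposition is literally the single sentence ``These are proved by direct, but quite long, inductions on $n$,'' and your sketch is precisely such an induction, with the expected use of rule~2 to peel off $c_n$ (resp.\ $b_n$), the Leibniz identity and coproduct compatibility for the base step, and the bookkeeping of correction terms that you outline. Your observation that part~1 is the $\cdot$-analogue of rule~3 and that part~2 needs Leibniz (the $n=1$ case of part~1) to distribute $\bullet\,b_n$ over the $\cdot$-product in the right tensor factor is exactly right.
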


\begin{proof}  These are proved by direct, but quite long, inductions on $n$. \end{proof}

\begin{lemma}\label{lemmacoproduit} 
Let $A$ be a Com-PreLie bialgebra. For all $a\in \prim(A)$, $k\geq 0$, $b_1,\ldots, b_l \in A$,
\[a\bullet \emptyset^{\times k}\times b_1\times \ldots \times b_l=f_A^k(a)\bullet b_1\times \ldots \times b_l.\]
\end{lemma}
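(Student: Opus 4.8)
The plan is to reduce everything to the case $k=1$ and then induct on $k$. Throughout I will use the defining recursion of the Oudom--Guin extension, namely that for $g\in A$ and factors $c_1,\dots,c_m\in A$,
\[
g\bullet(c_1\times\cdots\times c_m)=\bigl(g\bullet(c_1\times\cdots\times c_{m-1})\bigr)\bullet c_m-\sum_{i=1}^{m-1}g\bullet\bigl(c_1\times\cdots\times(c_i\bullet c_m)\times\cdots\times c_{m-1}\bigr),
\]
together with the fact that $\times$ is commutative on $S(A)$, so that any factor may be taken as the ``last'' one.

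First I would establish the key identity $(\star)$: for every $a\in A$ and all $b_1,\dots,b_l\in A$,
\[
a\bullet(\emptyset\times b_1\times\cdots\times b_l)=(a\bullet\emptyset)\bullet(b_1\times\cdots\times b_l).
\]
I prove $(\star)$ by induction on $l$. For $l=0$ both sides equal $a\bullet\emptyset$, since $g\bullet 1=g$ by the first defining clause of the extension. For the inductive step I peel off the factor $b_l$ on the left-hand side using the recursion, the remaining factors being $\emptyset,b_1,\dots,b_{l-1}$. The crucial point is that the summand in which $\emptyset$ is replaced by $\emptyset\bullet b_l$ vanishes, because $\emptyset\bullet b_l=0$ by Lemma~\ref{lem3}; hence the sum runs only over $j=1,\dots,l-1$, each term being $a\bullet(\emptyset\times b_1\times\cdots\times(b_j\bullet b_l)\times\cdots\times b_{l-1})$. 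Applying the induction hypothesis to these $l-1$ terms and to the leading term rewrites the whole expression as
\[
\bigl((a\bullet\emptyset)\bullet(b_1\times\cdots\times b_{l-1})\bigr)\bullet b_l-\sum_{j=1}^{l-1}(a\bullet\emptyset)\bullet\bigl(b_1\times\cdots\times(b_j\bullet b_l)\times\cdots\times b_{l-1}\bigr),
\]
which is exactly the recursion applied to $(a\bullet\emptyset)\bullet(b_1\times\cdots\times b_l)$ with $b_l$ as last factor. This proves $(\star)$.

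With $(\star)$ in hand I finish by induction on $k$. The case $k=0$ is the tautology $a\bullet(b_1\times\cdots\times b_l)=f_A^0(a)\bullet(b_1\times\cdots\times b_l)$. For $k\geq 1$, writing $\emptyset^{\times k}=\emptyset\times\emptyset^{\times(k-1)}$ and applying $(\star)$ with the list of factors $\emptyset^{\times(k-1)},b_1,\dots,b_l$ gives
\[
a\bullet(\emptyset^{\times k}\times b_1\times\cdots\times b_l)=(a\bullet\emptyset)\bullet(\emptyset^{\times(k-1)}\times b_1\times\cdots\times b_l).
\]
Since $a$ is primitive, $a\bullet\emptyset=f_A(a)$ is again primitive by Proposition~\ref{prop4}, so the induction hypothesis for $k-1$ applies to it and yields $f_A^{k-1}\bigl(f_A(a)\bigr)\bullet(b_1\times\cdots\times b_l)=f_A^k(a)\bullet(b_1\times\cdots\times b_l)$, as required.

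The main obstacle is the base case $(\star)$: one must check that the two recursive expansions, of $a\bullet(\emptyset\times b_1\times\cdots\times b_l)$ and of $(a\bullet\emptyset)\bullet(b_1\times\cdots\times b_l)$, match term by term, the essential simplification being that every summand involving a factor $\emptyset\bullet b_l$ disappears by Lemma~\ref{lem3}. Once this bookkeeping is settled, primitivity of $a$ is needed only at the very end, to guarantee through Proposition~\ref{prop4} that $f_A$ may be iterated.
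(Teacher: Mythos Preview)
Your proof is correct and follows essentially the same approach as the paper: first establish the case $k=1$ by induction on $l$ (your identity $(\star)$), using that $\emptyset\bullet b_l=0$ to match the two recursive expansions, and then induct on $k$. Your write-up is in fact slightly cleaner, since you keep the correct minus signs from the Oudom--Guin recursion and you make explicit that primitivity of $a$ is used only at the last step to iterate $f_A$.
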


\begin{proof} This is obvious if $k=0$. Let us prove it for $k=1$ by induction on $l$. It is obvious if $l=0$.
Let us assume the result at rank $l-1$. Then 
\begin{align*}
a\bullet \emptyset \times b_1\times \ldots \times b_l&=(a\bullet \emptyset \times b_1\times \ldots \times b_{l-1})\bullet b_l
+a\bullet (\emptyset \bullet b_l)\times b_1\times \ldots \times b_{l-1}\\
&+\sum_{i=1}^{l-1}a\bullet \emptyset \times b_1\times \ldots \times (b_i \bullet b_l)\times \ldots \times b_{l-1}\\
&=(f_A(a)\bullet b_1\times \ldots \times b_{l-1})\bullet b_l+0+\sum_{i=1}^{l-1}f_A(a)\bullet b_1\times \ldots \times (b_i \bullet b_l)\times \ldots \times b_{l-1}\\
&=f_A(a)\bullet b_1\times \ldots \times b_l.
\end{align*}
The result is proved for $k\geq 2$ by an induction on $k$. \end{proof}

\section{Examples on shuffle algebras}

\subsection{Preliminary lemmas}

We shall denote by $\pi:T(V)\longrightarrow V$ the canonical projection.

\begin{lemma} \label{lemma1}
Let $\varpi:T(V)\otimes T(V)\longrightarrow V$ be a linear map. 
\begin{enumerate}
\item There exists a unique map $\bullet:T(V)\otimes T(V)\longrightarrow T(V)$ such that 
\begin{enumerate}
\item $\pi \circ \bullet=\varpi$.
\item For all $u,v\in T(V)$,
\begin{align}
\label{EQ1}
\Delta(u\bullet v)&=u^{(1)}\otimes u^{(2)}\bullet v+u^{(1)}\bullet v^{(1)}\otimes u^{(2)}\shuffle v^{(2)}.
\end{align}
\end{enumerate}
This product $\bullet$ is given by 
\begin{align}
\label{eqbullet}
&\forall u,v\in T(V),&u\bullet v&=u^{(1)}\varpi(u^{(2)}\otimes v^{(1)})(u^{(3)}\shuffle v^{(2)}).
\end{align}
\item The following conditions are equivalent:
\begin{enumerate}
\item For all $u,v,w\in T(V)$,
\begin{align*}
(u\shuffle v)\bullet w&=(u\bullet w)\shuffle v+u\shuffle (v\bullet w).
\end{align*}
\item For all $u,v,w\in T(V)$,
\begin{align}
\label{EQ2} \varpi((u\shuffle v)\otimes w)&=\varepsilon(u) \varpi(v\otimes w)+\varepsilon(v)\varpi(u\otimes w).
\end{align}
\end{enumerate}
\item Let $N\in \mathbb{Z}$. The following conditions are equivalent:
\begin{enumerate}
\item $\bullet$ is homogeneous of degree $N$, that is to say 
\begin{align*}
&\forall k,l\geq 0,&V^{\otimes k}\bullet V^{\otimes l}\subseteq V^{\otimes (k+l+N)}.
\end{align*}
\item For all $k,l\geq 0$, such that $k+l+N\neq 1$, $\varpi(V^{\otimes k}\otimes V^{\otimes l})=(0)$.
\end{enumerate}
We use the convention $V^{\otimes p}=(0)$ if $p<0$. \end{enumerate}\end{lemma}

\begin{proof} 1. \textit{Existence.} Let $\bullet$ be the product on $T(V)$ defined by 
\begin{align*}
&\forall u,v\in T(V),&u\bullet v&=u^{(1)}\varpi(u^{(2)}\otimes v^{(1)})(u^{(3)}\shuffle v^{(2)}).
\end{align*}
As $\varpi$ takes its values in $V$, for all $u,v\in T(V)$,
\begin{align*}
\pi(u\bullet v)&=\varepsilon(u^{(1)}) \varpi(u^{(2)}\otimes v^{(1)}) \varepsilon(u^{(3)}\shuffle v^{(2)})\\
&=\varepsilon(u^{(1)}) \varpi(u^{(2)}\otimes v^{(1)}) \varepsilon(u^{(3)})\varepsilon(v^{(2)})\\
&=\varpi(u\otimes v).
\end{align*}
We denote by $m$ the concatenation product of $T(V)$. As $(T(V),m,\Delta)$ is an infinitesimal bialgebra (see \cite{Loday2004-2,Loday2006}),
for all $u,v\in T(V)$,
\begin{align*}
\Delta(u\bullet v)&=u^{(1)}\otimes u^{(2)} \varpi(u^{(3)}\otimes v^{(1)})(u^{(4)}\shuffle v^{(2)})
+u^{(1)}\varpi(u^{(2)}\otimes v^{(1)})\otimes u^{(3)}\shuffle v^{(2)}\\
&+u^{(1)}\otimes \varpi(u^{(2)}\otimes v^{(1)})(u^{(3)}\shuffle v^{(2)})+u^{(1)}\varpi(u^{(2)}\otimes v^{(1)})(u^{(3)}\shuffle v^{(2)})\otimes u^{(4)}\shuffle v^{(3)}\\
&-u^{(1)}\varpi(u^{(2)}\otimes v^{(1)})\otimes u^{(3)}\shuffle v^{(2)}-u^{(1)}\otimes \varpi(u^{(2)}\otimes v^{(1)})(u^{(3)}\otimes v^{(2)})\\
&=u^{(1)}\otimes u^{(2)} \varpi(u^{(3)}\otimes v^{(1)})(u^{(4)}\shuffle v^{(2)})
+u^{(1)}\varpi(u^{(2)}\otimes v^{(1)})(u^{(3)}\shuffle v^{(2)})\otimes u^{(4)}\shuffle v^{(3)}\\
&=u^{(1)}\otimes u^{(2)}\bullet v+u^{(1)}\bullet v^{(1)}\otimes u^{(2)}\shuffle v^{(2)}.
\end{align*}

\textit{Unicity}. Let $\diamond$ be another product satisfying the required properties. Let us denote that $u\diamond v=u\bullet v$ for any words 
$u,v$ of respective lengths $k$ and $l$. If $k=0$, then we can assume that $u=\emptyset$.  We proceed by induction on $l$. 
If $l=0$, then we can assume that $v=\emptyset$. By (\ref{EQ1}), $\emptyset\bullet \emptyset$ and 
$\emptyset \diamond \emptyset$ are primitive elements of $T(V)$, so belong to $V$. Hence,
\[\emptyset\bullet \emptyset=\pi(\emptyset\bullet \emptyset)=\varpi(\emptyset\otimes \emptyset)
=\pi(\emptyset\diamond \emptyset)=\emptyset\diamond\emptyset.\]
If $l\geq 1$, then, by (\ref{EQ1}),
\begin{align*}
\Delta(\emptyset\bullet v)&=\emptyset\otimes \emptyset\bullet v+\emptyset\bullet v\otimes \emptyset+\emptyset\bullet \emptyset\otimes v
+\emptyset\bullet v^{(1)}\otimes v^{(2)},\\
\tdelta(\emptyset\bullet v)&=\emptyset\bullet \emptyset\otimes v+\emptyset\bullet v^{(1)}\otimes v^{(2)}.
\end{align*}
The same computation for $\diamond$ and the induction hypothesis on $l$, applied to $(\emptyset,v^{(1)})$, imply that
$\tdelta(\emptyset\bullet v-\emptyset\diamond v)=0$, so $\emptyset\bullet v-\emptyset\diamond v\in V$. Finally,
\begin{align*}
\emptyset\bullet v-\emptyset\diamond v&=\pi(\emptyset\bullet v-\emptyset\diamond v)=\varpi(\emptyset\otimes v-\emptyset\otimes v)=0.
\end{align*}
If $k\geq 1$, we proceed by induction on $l$. If $l=0$, we can assume that $v=\emptyset$; 
(\ref{EQ1}) implies that $\tdelta(u\bullet \emptyset-u\diamond \emptyset)=0$,
so $u\bullet \emptyset-u\diamond \emptyset=0$ and, applying $\pi$, finally $u\bullet\emptyset=u\diamond \emptyset$.
If $l\geq 1$, by (\ref{EQ1}), the induction hypothesis on $k$ applied to $(u^{(1)},v)$ and the induction hypothesis on $l$
applied to $(u,\emptyset)$ and $(u,v^{(1)})$ gives
\begin{align*}
\tdelta(u\bullet v)&=u^{(1)}\otimes u^{(2)}\bullet v+u\bullet \emptyset\otimes v+u\bullet v^{(1)}\otimes v^{(2)}\\
&=u^{(1)}\otimes u^{(2)}\diamond v+u\diamond \emptyset\otimes v+u\diamond v^{(1)}\otimes v^{(2)}=\tdelta(u\diamond v).
\end{align*}
As before, $u\bullet v=u\diamond v$.\\

2. $\Longrightarrow$. As $\varpi$ takes its values in $V$, we have 
 \begin{align*}
 \varpi(u\shuffle v)\otimes w)&=\varpi ((u\bullet w)\shuffle v+u\shuffle (v\bullet w))\\
 &=\varepsilon(v)\varpi(u\otimes w)+\varepsilon(u)\varpi(v\otimes w).
 \end{align*}
 $\Longleftarrow$. For all $u,v,w\in T(V)$,
\begin{align*}
(u\shuffle v)\bullet w&=(u^{(1)}\shuffle v^{(1)})\varpi((u^{(2)}\shuffle v^{(2)})\otimes w^{(1)})(u^{(3)}\shuffle v^{(3)}\shuffle w^{(2)})\\
&=\varepsilon(u^{(2)})(u^{(1)}\shuffle v^{(1)})\varpi(v^{(2)}\otimes w^{(1)})(u^{(3)}\shuffle v^{(3)}\shuffle w^{(2)})\\
&+\varepsilon(v^{(2)})(u^{(1)}\shuffle v^{(1)})\varpi(u^{(2)}\otimes w^{(1)})(u^{(3)}\shuffle v^{(3)}\shuffle w^{(2)})\\
&=(u^{(1)}\shuffle v^{(1)})\varpi(v^{(2)}\otimes w^{(1)})(u^{(2)}\shuffle v^{(3)}\shuffle w^{(2)})\\
&+(u^{(1)}\shuffle v^{(1)})\varpi(u^{(2)}\otimes w^{(1)})(u^{(3)}\shuffle v^{(2)}\shuffle w^{(2)})\\
&=u\shuffle \left(v^{(1)}\varpi(v^{(2)}\otimes w^{(1)})(v^{(3)}\shuffle w^{(2)})\right)\\
&+v\shuffle \left(u^{(1)}\varpi(u^{(2)}\otimes w^{(1)})(u^{(3)}\shuffle w^{(2)})\right)\\
&=u\shuffle (v\bullet w)+(u\bullet w)\shuffle v.
\end{align*}
So the compatibility between $\shuffle$ and $\bullet$ is satisfied. \\

3. $(a)\Longrightarrow (b)$: immediately implied by $\varpi=\pi\circ \bullet$.
$(b)\Longrightarrow (a)$: comes from (\ref{eqbullet}).
 \end{proof}

\begin{remark} If (\ref{EQ2}) is satisfied, for $u=v=\emptyset$, we obtain 
\begin{align*}
&\forall w\in T(V),&\varpi(\emptyset \otimes w)=0.
\end{align*}\end{remark}

\begin{lemma}
Let $\varpi:T(V)\otimes T(V)\longrightarrow V$, satisfying (\ref{EQ2}), and let $\bullet$ be the product associated to $\varpi$ in Lemma \ref{lemma1}.
Then $(T(V),\shuffle,\bullet,\Delta)$ is a Com-PreLie bialgebra if, and only if 
\begin{align}\label{EQ3}
&\forall u,v,w\in T(V),&\varpi(u\bullet v\otimes w)-\varpi(u\otimes v\bullet w)=
\varpi(u\bullet w\otimes v)-\varpi(u\otimes w\bullet v).
\end{align} \end{lemma}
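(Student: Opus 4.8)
The plan is to observe first that almost all of the defining axioms of a Com-PreLie bialgebra come for free, so that the statement collapses to a single condition on $\bullet$. Indeed, $(T(V),\shuffle,\Delta)=Sh(V)$ is already a commutative bialgebra; the coproduct compatibility (item (c) of Definition \ref{defi1}) is precisely the identity (\ref{EQ1}), which holds by Lemma \ref{lemma1}.1; and the Leibniz identity holds by Lemma \ref{lemma1}.2, since $\varpi$ is assumed to satisfy (\ref{EQ2}). Hence $(T(V),\shuffle,\bullet,\Delta)$ is a Com-PreLie bialgebra if and only if $\bullet$ satisfies the preLie identity, and it remains only to show that this identity is equivalent to (\ref{EQ3}). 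Throughout I would write $\Phi(u,v,w)=(u\bullet v)\bullet w-u\bullet(v\bullet w)$, so that the preLie identity reads $\Phi(u,v,w)=\Phi(u,w,v)$.

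For the direct implication I would simply apply the projection $\pi$ to the preLie identity. Since $\pi(x\bullet y)=\varpi(x\otimes y)$ by Lemma \ref{lemma1}.1, applying $\pi$ to $\Phi(u,v,w)=\Phi(u,w,v)$ yields exactly (\ref{EQ3}).

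The substance is the converse. Assuming (\ref{EQ3}), I would prove $D(u,v,w):=\Phi(u,v,w)-\Phi(u,w,v)=0$ by induction on $\ell(u)+\ell(v)+\ell(w)$, where $\ell$ denotes the length. The key computation is to expand $\Delta(\Phi(u,v,w))$ using (\ref{EQ1}) twice, once for the outer and once for the inner $\bullet$, together with the Leibniz rule to rewrite the term $(u^{(2)}\shuffle v^{(2)})\bullet w$. After the summands matching $\Delta(u\bullet(v\bullet w))$ cancel, one is left with the two ``mixed'' terms $(u^{(1)}\bullet v^{(1)})\otimes((u^{(2)}\bullet w)\shuffle v^{(2)})$ and $(u^{(1)}\bullet w^{(1)})\otimes((u^{(2)}\bullet v)\shuffle w^{(2)})$, whose sum is symmetric in $v$ and $w$. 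Consequently these cancel in the antisymmetrization $D$, leaving the clean recursion
\[\Delta(D(u,v,w))=u^{(1)}\otimes D(u^{(2)},v,w)+D(u^{(1)},v^{(1)},w^{(1)})\otimes(u^{(2)}\shuffle v^{(2)}\shuffle w^{(2)}).\]

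From this recursion the conclusion follows quickly. Subtracting the two group-like terms $\emptyset\otimes D(u,v,w)$ and $D(u,v,w)\otimes\emptyset$, every remaining summand of $\tdelta(D(u,v,w))$ contains a factor $D$ whose arguments have strictly smaller total length, hence vanishes by the induction hypothesis; thus $\tdelta(D(u,v,w))=0$. Since the primitives of $Sh(V)$ reduce to $V$, this forces $D(u,v,w)\in V$, so $D(u,v,w)=\pi(D(u,v,w))$; but $\pi(D(u,v,w))$ is exactly the difference of the two sides of (\ref{EQ3}), which is zero, and therefore $D(u,v,w)=0$. The hard part will be carrying out the double expansion of $\Delta(\Phi(u,v,w))$ carefully enough to witness the announced cancellation of the mixed terms; once the recursion for $\Delta(D)$ is established, the induction and the appeal to primitivity are routine.
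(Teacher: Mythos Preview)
Your proposal is correct and follows essentially the same strategy as the paper: both establish the coproduct recursion
\[\Delta(D(u,v,w))=u^{(1)}\otimes D(u^{(2)},v,w)+D(u^{(1)},v^{(1)},w^{(1)})\otimes u^{(2)}\shuffle v^{(2)}\shuffle w^{(2)},\]
then use it to force $D$ to be primitive, hence in $V$, hence zero by (\ref{EQ3}). Your computation of $\Delta(\Phi(u,v,w))$ and the cancellation of the mixed terms $(u^{(1)}\bullet v^{(1)})\otimes((u^{(2)}\bullet w)\shuffle v^{(2)})+(u^{(1)}\bullet w^{(1)})\otimes((u^{(2)}\bullet v)\shuffle w^{(2)})$ under the $(v,w)$-antisymmetrization is exactly right.

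The only difference is bookkeeping: the paper first uses $\emptyset\bullet v=0$ to reduce to $\varepsilon(u)=0$, then treats the case $v=\emptyset$ separately by induction on $\ell(w)$, and finally runs a nested induction on $\ell(u)$ and on $\ell(v)+\ell(w)$. Your single induction on the total length $\ell(u)+\ell(v)+\ell(w)$ is cleaner and avoids these case splits entirely; since $\varepsilon(a\bullet b)=0$ (as $\varpi$ lands in $V$), the two ``group-like'' occurrences of $D$ in the recursion are unambiguous, and every other summand carries a $D$ with strictly smaller total length, so the argument goes through without separating out the boundary cases.
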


\begin{proof} $\Longrightarrow$. This is immediately obtained by applying $\pi$ to the preLie identity, as $\varpi=\pi\circ \bullet$.\\

$\Longleftarrow$. By lemma \ref{lemma1}, it remains to prove that $\bullet$ is preLie.
For any $u,v,w\in T(V)$, we put
\[PL(u,v,w)=(u\bullet v)\bullet w-u\bullet(v\bullet w)-(u\bullet w)\bullet v+u\bullet(w\bullet v).\]
By hypothesis, $\pi\circ PL(u,v,w)=0$ for any $u,v,w\in T(V)$.
Let us prove that $PL(u,v,w)=0$ for any $u,v,w\in T(V)$.
A direct computation using (\ref{EQ1}) shows that
\begin{align}\label{EQ4}
\Delta(PL(u,v,w))&=u^{(1)}\otimes PL(u^{(2)},v,w)\otimes u^{(1)}+PL(u^{(1)},v^{(1)},w^{(1)})\otimes u^{(2)}\shuffle v^{(2)}\shuffle w^{(2)}.
\end{align}
Let $v\in T(V)$. Then
\begin{align*}
\emptyset\bullet v&=(\emptyset\shuffle \emptyset)\bullet v=(\emptyset\bullet v)\shuffle \emptyset+\emptyset\shuffle(\emptyset\bullet v)=2 \emptyset\bullet v,
\end{align*}
so $\emptyset \bullet v=0$ for any $v\in T(V)$. Hence, for any $v,w\in T(V)$, $PL(\emptyset,v,w)=0$: by trilinearity of $PL$, we can assume that
$\varepsilon(u)=0$. In this case, (\ref{EQ4}) becomes 
\begin{align*}
\Delta(PL(u,v,w))&=\emptyset \otimes PL(u,v,w)+PL(u,v^{(1)},w^{(1)})\otimes v^{(2)}\shuffle w^{(2)}\\
&+PL(u^{(1)},v^{(1)},w^{(1)})\otimes u^{(2)}\shuffle v^{(2)}\shuffle w^{(2)}.
\end{align*}
We assume that $u,v,w$ are words of respective lengths $k$, $l$ and $n$, with $k\geq 1$.
Let us first prove that $PL(u,v,w)=0$ if $l=0$, or equivalently if $v=\emptyset$, by induction on $n$.
 If $n=0$, then we can take $w=\emptyset$
and, obviously, $PL(u,\emptyset,\emptyset)=0$. If $n\geq 1$, (\ref{EQ4}) becomes 
\begin{align*}
\Delta(PL(u,\emptyset,w))&=\emptyset \otimes PL(u,v,w)+PL(u,\emptyset,w^{(1)})\otimes w^{(2)}\\
&=\emptyset \otimes PL(u,v,w)+PL(u,\emptyset,w)\otimes \emptyset+PL(u,\emptyset,w^{(1)})\otimes w^{(2)}.
\end{align*}
By the induction hypothesis on $n$, $PL(u,\emptyset,w^{(1)})=0$, so $PL(u,\emptyset,w)$ is primitive, so belongs to $V$.
As $\pi\circ PL=0$, $PL(u,\emptyset,w)=0$. 

Therefore, we can now assume that $l\geq 1$. By symmetry in $v$ and $w$, we can also assume that $n\geq 1$. Let us now prove 
that $PL(u,v,w)=0$ by induction on $k$. If $k=0$, there is nothing more to prove. If $k\geq 1$, we proceed by induction on $l+n$. 
If $l+n\leq 1$, there is nothing more to prove. Otherwise, using both induction hypotheses, (\ref{EQ4}) becomes 
\begin{align*}
\Delta(PL(u,v,w))&=PL(u,v,w)\otimes \emptyset+\emptyset\otimes PL(u,v,w).
\end{align*}
So $PL(u,v,w)\in V$. As $\pi\circ PL=0$, $PL(u,v,w)=0$. \end{proof}

Consequently: 

\begin{prop} \label{prop9}
Let $\varpi:T(V)\otimes T(V) \longrightarrow V$ be a linear map such that (\ref{EQ2}) and (\ref{EQ3}) are satisfied.
The product $\bullet$ defined by (\ref{EQ1}) makes $(T(V),\shuffle,\bullet,\Delta)$ a Com-PreLie bialgebra.
We obtain in this way all the preLie products $\bullet$ such that $(T(V),\shuffle,\bullet,\Delta)$ a Com-PreLie bialgebra.
Moreover, for any $N\in \mathbb{Z}$, $\bullet$ is homogeneous of degree $N$ if, and only if 
\begin{align}\label{EQ5}
&\forall k,l\in \mathbb{N},& k+l+N\neq 1&\Longrightarrow \varpi(V^{\otimes k}\otimes V^{\otimes l})=(0).
\end{align}\end{prop}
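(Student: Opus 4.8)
The plan is to assemble the statement directly from the two lemmas just proved, and then verify the homogeneity condition \eqref{EQ5} by an appeal to part~3 of Lemma \ref{lemma1}. First I would observe that Lemma \ref{lemma1}(1) already guarantees, for any linear map $\varpi:T(V)\otimes T(V)\longrightarrow V$, the existence and uniqueness of a product $\bullet$ on $T(V)$ satisfying $\pi\circ\bullet=\varpi$ together with the coproduct compatibility \eqref{EQ1}. Given that \eqref{EQ2} holds, Lemma \ref{lemma1}(2) upgrades this to the Leibniz-type identity
\begin{align*}
(u\shuffle v)\bullet w&=(u\bullet w)\shuffle v+u\shuffle(v\bullet w),
\end{align*}
which is exactly the Leibniz identity of Definition \ref{defi1} for the pair $(\shuffle,\bullet)$. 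Since $(T(V),\shuffle,\Delta)$ is already a (Hopf) bialgebra, and since \eqref{EQ1} is literally the bialgebra compatibility required in part~2(c) of Definition \ref{defi1}, the only remaining axiom to secure is that $\bullet$ be a right preLie product. That is supplied by the preceding lemma: under hypothesis \eqref{EQ3}, $(T(V),\shuffle,\bullet,\Delta)$ is a Com-PreLie bialgebra. This establishes the first assertion.

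For the converse surjectivity claim — that \emph{every} preLie product making $(T(V),\shuffle,\bullet,\Delta)$ a Com-PreLie bialgebra arises this way — I would argue as follows. Suppose $\bullet$ is any such product. Set $\varpi:=\pi\circ\bullet$, a linear map $T(V)\otimes T(V)\longrightarrow V$. The bialgebra compatibility of Definition \ref{defi1}(2c), specialized to the shuffle coproduct, is precisely \eqref{EQ1}, so by the uniqueness half of Lemma \ref{lemma1}(1) the given $\bullet$ coincides with the product associated to this $\varpi$. Applying $\pi$ to the Leibniz identity yields \eqref{EQ2} (this is the direction $\Longrightarrow$ already recorded in Lemma \ref{lemma1}(2)), and applying $\pi$ to the preLie identity yields \eqref{EQ3} (the direction $\Longrightarrow$ of the previous lemma). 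Hence $\varpi$ satisfies both \eqref{EQ2} and \eqref{EQ3}, and $\bullet$ is the product it induces, proving that the construction exhausts all such preLie products.

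Finally, the homogeneity statement is immediate from Lemma \ref{lemma1}(3): $\bullet$ is homogeneous of degree $N$, meaning $V^{\otimes k}\bullet V^{\otimes l}\subseteq V^{\otimes(k+l+N)}$ for all $k,l\geq 0$, if and only if $\varpi(V^{\otimes k}\otimes V^{\otimes l})=(0)$ whenever $k+l+N\neq 1$, which is exactly \eqref{EQ5}. I do not expect any genuine obstacle here, since all the analytic content has been absorbed into the two lemmas; the proposition is essentially a clean repackaging. The only point demanding a little care is the converse direction, where one must be sure that \eqref{EQ1} is strong enough to pin down $\bullet$ from $\varpi$ via uniqueness — but this is guaranteed verbatim by Lemma \ref{lemma1}(1), so the argument reduces to citing the three parts of that lemma together with the intermediate lemma on \eqref{EQ3}.
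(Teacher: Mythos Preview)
Your proposal is correct and matches the paper's approach exactly: the paper simply writes ``Consequently:'' before stating the proposition, since all the work has been done in Lemma~\ref{lemma1} and the lemma immediately preceding Proposition~\ref{prop9}. Your write-up spells out in detail what the paper leaves implicit, including the converse direction (defining $\varpi:=\pi\circ\bullet$ and invoking uniqueness), and everything you cite is used in the intended way.
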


\begin{remark} Let $\varpi:T(V)\otimes T(V)\longrightarrow V$, satisfying (\ref{EQ5}) for a given $N\in \mathbb{Z}$. Then 
\begin{enumerate}
\item  (\ref{EQ2}) is satisfied if, and only if, for all $k,l,n\in \mathbb{N}$ such that $k+l+n=1-N$,
\begin{align*}
&\forall u\in V^{\otimes k},\:\forall v\in V^{\otimes l},\: \forall w\in V^{\otimes n},
&\varpi((u\shuffle v)\otimes w)&=\varepsilon(u)\varpi(v\otimes w)+\varepsilon(v)\varpi(u\otimes w).
\end{align*}
\item (\ref{EQ3}) is satisfied if, and only if, for all $k,l,n\in \mathbb{N}$ such that $k+l+n=1-2N$,
\begin{align*}
&\forall \in V^{\otimes k},\:\forall v\in V^{\otimes l},\:\forall w\in V^{\otimes n},
&\varpi(u\bullet v\otimes w)-\varpi(u\otimes v\bullet w)\\
&&=\varpi(u\bullet w\otimes v)-\varpi(u\otimes w\bullet v).
\end{align*} \end{enumerate}
Note that (\ref{EQ2}) is always satisfied if $u=\emptyset$ or $v=\emptyset$, that is to say if $k=0$ or $l=0$.
\end{remark}

In the next paragraphs, we shall look at $N\geq 0$ and $N=-1$.

\subsection{PreLie products of positive degree}

\begin{prop}\label{propf}
Let $f$ be a linear endomorphism of $V$. 
We define a product $\bullet$ on $T(V)$ by
\begin{align}\label{EQ6}
&\forall x_1,\ldots,x_m,y_1,\ldots,y_n\in V,&
x_1\ldots x_m\bullet y_1\ldots y_n=\sum_{i=0}^n x_1\ldots x_{i-1}f(x_i)(x_{i+1}\ldots x_m\shuffle y_1\ldots y_n).
\end{align}
Then $(T(V),\shuffle,\bullet,\Delta)$ is a Com-PreLie bialgebra denoted by $T(V,f)$. 
Conversely, if $\bullet$ is a product on $T(V)$, homogeneous of degree $N\geq 0$, there exists a unique $f:V\longrightarrow V$
such that $(T(V),\shuffle,\bullet,\Delta)=T(V,f)$.
\end{prop}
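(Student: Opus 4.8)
The plan is to connect the explicit formula \eqref{EQ6} to the general framework of Proposition \ref{prop9}, thereby reducing everything to verifying the two hypotheses \eqref{EQ2} and \eqref{EQ3} for a suitable $\varpi$, and then inverting the construction. For a degree-$N$ homogeneous product with $N\geq 0$, the remark following Proposition \ref{prop9} tells us that \eqref{EQ5} forces $\varpi(V^{\otimes k}\otimes V^{\otimes l})=(0)$ whenever $k+l+N\neq 1$. Since $k,l\geq 0$ and $N\geq 0$, the only surviving case is $k=1$, $l=0$, $N=0$ (as $k+l\geq 1$ whenever $\varpi$ is nonzero, and $N\geq 0$ forces $N=0$). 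Thus the first thing I would record is that \emph{a degree-$N\geq 0$ product is automatically of degree $0$}, and that the corresponding $\varpi$ is determined by its restriction to $V\otimes \emptyset$, i.e.\ by a single linear map $f:V\longrightarrow V$ defined by $f(x)=\varpi(x\otimes\emptyset)$.

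For the first (direct) half, I would take this $\varpi$ supported on $V\otimes\emptyset$, given by $\varpi(x\otimes\emptyset)=f(x)$ for $x\in V$ and zero otherwise, and compute the associated product from the formula $u\bullet v=u^{(1)}\varpi(u^{(2)}\otimes v^{(1)})(u^{(3)}\shuffle v^{(2)})$ of Lemma \ref{lemma1}. Because $\varpi$ vanishes unless its second argument is $\emptyset$ and its first argument lies in $V$, the deconcatenation $\Delta(u)=u^{(1)}\otimes u^{(2)}\otimes u^{(3)}$ collapses: writing $u=x_1\ldots x_m$, the middle factor $u^{(2)}$ must be a single letter $x_i$, forcing $u^{(1)}=x_1\ldots x_{i-1}$ and $u^{(3)}=x_{i+1}\ldots x_m$, while $v^{(1)}=\emptyset$ forces $v^{(2)}=v$. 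This reproduces exactly \eqref{EQ6}. It then remains only to check \eqref{EQ2} and \eqref{EQ3} for this $\varpi$ and invoke Proposition \ref{prop9}. Both are easy: \eqref{EQ2} concerns $\varpi((u\shuffle v)\otimes w)$, and since $\varpi$ only detects the length-$1$, second-argument-empty component, the identity reduces to the elementary observation that the length-$1$ part of $u\shuffle v$ equals $\varepsilon(v)u+\varepsilon(u)v$ restricted to $V$. Identity \eqref{EQ3} holds degreewise on $k+l+n=1-2N=1$, i.e.\ one argument in $V$ and the other two empty, where both sides collapse to expressions in $f$ that match trivially.

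For the converse, suppose $\bullet$ is any product of degree $N\geq 0$ making $(T(V),\shuffle,\bullet,\Delta)$ a Com-PreLie bialgebra. By Proposition \ref{prop9} it arises from a unique $\varpi$ satisfying \eqref{EQ2} and \eqref{EQ3} and homogeneous of degree $N$, hence by the reduction above $N=0$ and $\varpi$ is supported on $V\otimes\emptyset$, determined by $f(x)=\varpi(x\otimes\emptyset)$. Uniqueness of $f$ is immediate since $f$ is recovered as $\pi(x\bullet\emptyset)$ for $x\in V$, i.e.\ $f=f_{T(V)}$ in the notation of Proposition \ref{prop4}. By the first half, the product attached to this $f$ is exactly \eqref{EQ6}, and by the uniqueness clause of Lemma \ref{lemma1} it coincides with $\bullet$; thus $(T(V),\shuffle,\bullet,\Delta)=T(V,f)$.

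I expect the main obstacle to be purely bookkeeping in the direct half: carefully tracking the triple deconcatenation in $u\bullet v=u^{(1)}\varpi(u^{(2)}\otimes v^{(1)})(u^{(3)}\shuffle v^{(2)})$ and confirming that the support condition on $\varpi$ collapses the sum to precisely the $m+1$ terms indexed by $i=0,\ldots,m$ in \eqref{EQ6} (note the index shift: $v^{(1)}=\emptyset$ means the shuffle is with all of $v$, and $u^{(2)}\in V$ ranges over the $m$ letters plus the degenerate empty-middle term contributing the $i=0$, $f$-free piece). Everything else—the two compatibility identities and the inversion argument—follows formally from the results already established, so no genuinely new idea is required.
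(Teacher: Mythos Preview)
Your approach is essentially the paper's: reduce via Proposition~\ref{prop9} to a $\varpi$ satisfying \eqref{EQ2}, \eqref{EQ3}, \eqref{EQ5}, use homogeneity together with $\varpi(\emptyset\otimes w)=0$ to force the support onto $V\otimes\emptyset$, then check \eqref{EQ2} and \eqref{EQ3} on the few degree-constrained triples $k+l+n=1$.

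Two small corrections to your bookkeeping. First, to land on $(k,l)=(1,0)$ you need the stronger fact $k\geq 1$ (i.e.\ $\varpi_{0,l}=0$ for all $l$, from the remark after \eqref{EQ2}), not merely $k+l\geq 1$; your stated inequality does not exclude $(k,l)=(0,1)$. Second, there is no ``$i=0$, $f$-free piece'' in the expansion of $u\bullet v$: when $u^{(2)}=\emptyset$ the factor $\varpi(u^{(2)}\otimes v^{(1)})$ vanishes by the support hypothesis, so the triple deconcatenation collapses to exactly $m$ terms, one for each letter $x_i$ of $u$. The bounds $\sum_{i=0}^{n}$ in \eqref{EQ6} are a typo in the paper for $\sum_{i=1}^{m}$; do not try to manufacture an extra term to match them.
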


\begin{proof} We look for all possible $\varpi$, homogeneous of a certain degree $N\geq 0$, such that (\ref{EQ2}) and (\ref{EQ3}) are satisfied.

Let us consider such a $\varpi$.
For any $k,l\in \mathbb{N}$, we denote by $\varpi_{k,l}$ the restriction of $\varpi$ to $V^{\otimes k}\otimes V^{\otimes l}$.
By (\ref{EQ5}), $\varpi_{k,l}=0$ if $k+l\neq 1$. As (\ref{EQ2}) implies that $\varpi_{0,1}=0$, the only possibly nonzero $\varpi_{k,l}$
is $\varpi_{1,0}:V\longrightarrow V$, which we denote by $f$. Then (\ref{EQ1}) gives (\ref{EQ6}).

Let us consider any linear endomorphism $f$ of $V$ and consider $\varpi$ such that the only nonzero component of $\varpi$
is $\varpi_{1,0}=f$. Let us prove (\ref{EQ2})  for  $u\in V^{\otimes k}$, $v\in V^{\otimes l}$, $w\in V^{\otimes n}$, with $k+l+n=1-N$.
For all the possibilities for $(k,l,n)$, $0\in \{k,l,n\}$, and the result is then obvious.

Let us prove (\ref{EQ2})  for  $u\in V^{\otimes k}$, $v\in V^{\otimes l}$, $w\in V^{\otimes n}$, with $k+l+n=1-2N$. We obtain two possibilities:
\begin{itemize}
\item $(k,l,n)=(0,1,0)$ or $(0,0,1)$. We can assume that $u=\emptyset$. As $\emptyset\bullet x=0$ for any $x\in T(V)$, the result is obvious.
\item $(k,l,n)=(1,0,0)$. We can assume that $v=w=\emptyset$, and the result is then obvious.  \qedhere
\end{itemize} \end{proof}

\begin{remark} \begin{enumerate}
\item If $N\geq 1$, necessarily $f=0$, so $\bullet=0$.
\item With the notation of Proposition \ref{prop4}, $f_{T(V,f)}=f$.
\end{enumerate}\end{remark}

We obtain in this way the family of Com-PreLie bialgebras of \cite{Foissy28},
coming from a problem of composition of Fliess operators in Control Theory. Consequently, from \cite{Foissy28}:

\begin{cor}
Let $k,l\geq 0$. We denote by $\sh(k,l)$ the set of $(k,l)$-shuffles, that it to say permutations $\sigma \in \mathfrak{S}_{k+l}$ such that 
\begin{align*}
&\sigma(1)<\ldots<\sigma(k),&&\sigma(k+1)<\ldots<\sigma(k+l).
\end{align*}
If $\sigma\in \sh(k,l)$,  we put 
\begin{align*}
m_k(\sigma)&=\max\{i\in [k]\mid \sigma(1)=1,\ldots,\sigma(i)=i\},
\end{align*}
with the convention $m_k(\sigma)=0$ if $\sigma(1)\neq 1$. Then, in $T(V,f)$, if $v_1,\ldots,v_{k+l}\in V$,
\begin{align}\label{EQshuffle}
v_1\ldots v_k \bullet v_{k+1}\ldots v_{k+l}&=\sum_{\sigma\in \sh(k,l)}\sum_{i=1}^{m_k(\sigma)}
(\id^{\otimes (i-1)}\otimes f\otimes \id^{\otimes(k+l-i)})(v_{\sigma^{-1}(1)}\ldots v_{\sigma^{-1}(k+l)}).
\end{align} \end{cor}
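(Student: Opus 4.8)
The plan is to derive (\ref{EQshuffle}) directly from the defining formula (\ref{EQ6}) of the product $\bullet$ of $T(V,f)$, by expanding the shuffle products occurring there and regrouping the terms according to the global interleaving they produce. First I would specialize (\ref{EQ6}) to $m=k$, $n=l$, with $x_j=v_j$ for $j\in[k]$ and $y_j=v_{k+j}$ for $j\in[l]$, so that it reads
\[v_1\ldots v_k\bullet v_{k+1}\ldots v_{k+l}=\sum_{i=1}^k v_1\ldots v_{i-1}\,f(v_i)\,(v_{i+1}\ldots v_k\shuffle v_{k+1}\ldots v_{k+l}).\]
Each summand places $v_1,\ldots,v_i$ in positions $1,\ldots,i$, applies $f$ to the $i$-th of them, and fills positions $i+1,\ldots,k+l$ with an arbitrary interleaving of $v_{i+1},\ldots,v_k$ and $v_{k+1},\ldots,v_{k+l}$, each family keeping its internal order.

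The key combinatorial point is to identify, summand by summand, which global $(k,l)$-shuffles arise. A word produced by the $i$-th summand is, up to the factor $f$ on position $i$, an interleaving $v_{\sigma^{-1}(1)}\ldots v_{\sigma^{-1}(k+l)}$ of $v_1\ldots v_k$ with $v_{k+1}\ldots v_{k+l}$ whose first $i$ letters are $v_1,\ldots,v_i$ in order, i.e. a $\sigma\in Sh(k,l)$ with $\sigma(1)=1,\ldots,\sigma(i)=i$. I would check that this assignment is a bijection between the monomials of the tail shuffle $v_{i+1}\ldots v_k\shuffle v_{k+1}\ldots v_{k+l}$ and those $(k,l)$-shuffles: deleting the first $i$ fixed positions identifies such a $\sigma$ with a $(k-i,l)$-shuffle of $v_{i+1},\ldots,v_k,v_{k+1},\ldots,v_{k+l}$, which is exactly what the tail shuffle enumerates (the distinct formal letters $v_j$ make these monomials pairwise distinct). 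Hence the $i$-th summand equals the sum of $(Id^{\otimes(i-1)}\otimes f\otimes Id^{\otimes(k+l-i)})(v_{\sigma^{-1}(1)}\ldots v_{\sigma^{-1}(k+l)})$ over the $\sigma\in Sh(k,l)$ with $\sigma(1)=1,\ldots,\sigma(i)=i$.

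Finally I would exchange the two summations: summing over $i\in[k]$ and, for each $i$, over the $\sigma$ with $\sigma(1)=1,\ldots,\sigma(i)=i$ is the same as summing over all $\sigma\in Sh(k,l)$ and, for each $\sigma$, over all $i\in[k]$ with $\sigma(1)=1,\ldots,\sigma(i)=i$. By the very definition of $m_k(\sigma)$ as the length of the initial run of fixed points of $\sigma$ in $[k]$, the inner condition is precisely $1\le i\le m_k(\sigma)$ (the inner sum being empty when $\sigma(1)\neq 1$, i.e. $m_k(\sigma)=0$), which is exactly (\ref{EQshuffle}).

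The only genuinely delicate point is the bookkeeping of the position of $f$ in this bijection: one must verify that a shuffle $\sigma$ with $m_k(\sigma)=M$ contributes exactly once to the $i$-th summand for each $i\in\{1,\ldots,M\}$, with $f$ landing on position $i$, and never for $i>M$. The exclusion for $i>M$ uses that, $\sigma$ being a $(k,l)$-shuffle, the relations $\sigma(1)=1,\ldots,\sigma(M)=M$ force $\sigma(M+1)>M+1$ (when $M<k$), so position $M+1$ of $v_{\sigma^{-1}(1)}\ldots v_{\sigma^{-1}(k+l)}$ is occupied by a letter of the second block rather than by $v_{M+1}$; thus the prefix condition required by the $(M+1)$-st summand fails, and a fortiori so do those for larger $i$. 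Everything else is a routine manipulation of shuffle products, so this regrouping argument, rather than a fresh induction, seems the most economical route and matches the computation in \cite{Foissyprelie}.
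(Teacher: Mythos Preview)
Your argument is correct: expanding the tail shuffle in (\ref{EQ6}), identifying each resulting interleaving with a $(k,l)$-shuffle $\sigma$ satisfying $\sigma(1)=1,\ldots,\sigma(i)=i$, and then swapping the order of summation yields (\ref{EQshuffle}) exactly as you describe; the downward-closed nature of the prefix condition makes the inner range $1\le i\le m_k(\sigma)$ immediate. The paper itself does not supply a proof of this corollary but simply invokes \cite{Foissyprelie}, so your direct combinatorial derivation is precisely the kind of elaboration that reference stands in for, and it follows the same route.
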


\subsection{PreLie products of degree $-1$}

\begin{prop}\label{prop11}
Let $*$ and $\{-,-\}$ be two bilinear products on $V$ such that 
\begin{align}\label{EQ7}
&\forall x,y,z\in V,&(x*y)*z-x*(y*z)&=(x*z)*y-x*(z*y),\\
\nonumber&&\{x,y\}&=-\{y,x\},\\
\nonumber&&x*\{y,z\}&=\{x*y,z\},\\
\nonumber&&\{x,y\}*z&=\{x*z,y\}+\{x,y*z\}+\{\{x,y\},z\}.
\end{align}
We define a product $\bullet$ on $T(V)$ in the following way: for all $x_1,\ldots,x_m,y_1,\ldots,y_n\in V$,
\begin{align}\label{EQ8}
x_1\ldots x_m\bullet y_1\ldots y_n&=\sum_{i=1}^n x_1\ldots x_{i-1}(x_i*y_1)(x_{i+1}\ldots x_m\shuffle y_2\ldots y_n)\\
\nonumber&+\sum_{i=1}^{k-1}x_1\ldots x_{i-1}\{x_i,x_{i+1}\}(x_{i+2}\ldots x_m\shuffle y_1\ldots y_n).
\end{align}
Then $(T(V),\shuffle,\bullet,\Delta)$ is a Com-PreLie bialgebra, and we obtain in this way all the possible preLie products $\bullet$, homogeneous of degree
$-1$, such that $(T(V),\shuffle,\bullet,\Delta)$ is a Com-PreLie bialgebra.
\end{prop}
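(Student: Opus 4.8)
The plan is to invoke Proposition \ref{prop9}, which reduces the problem entirely to linear algebra on $\varpi$: a preLie product $\bullet$ making $(T(V),\shuffle,\bullet,\Delta)$ a Com-PreLie bialgebra and homogeneous of degree $-1$ is exactly the product attached by Lemma \ref{lemma1} to a linear map $\varpi:T(V)\otimes T(V)\to V$ satisfying (\ref{EQ2}), (\ref{EQ3}) and the homogeneity condition (\ref{EQ5}) for $N=-1$. So I would first read off what (\ref{EQ5}) forces, then translate (\ref{EQ2}) and (\ref{EQ3}) into relations on the surviving components of $\varpi$, checking at the end that these components encode precisely the pair $(*,\{-,-\})$ and the formula (\ref{EQ8}).

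First, (\ref{EQ5}) with $N=-1$ says $\varpi(V^{\otimes k}\otimes V^{\otimes l})=(0)$ whenever $k+l\neq 2$, so $\varpi$ is determined by its three components $\varpi_{2,0}$, $\varpi_{1,1}$, $\varpi_{0,2}$. The remark following Lemma \ref{lemma1} gives $\varpi(\emptyset\otimes w)=0$, hence $\varpi_{0,2}=0$, and I would set $x*y=\varpi_{1,1}(x\otimes y)$ and $\{x,y\}=\varpi_{2,0}(xy\otimes\emptyset)$ for $x,y\in V$. Substituting into the explicit formula $u\bullet v=u^{(1)}\varpi(u^{(2)}\otimes v^{(1)})(u^{(3)}\shuffle v^{(2)})$ of Lemma \ref{lemma1} and retaining only the deconcatenation terms with $(|u^{(2)}|,|v^{(1)}|)\in\{(2,0),(1,1)\}$ (the pair $(0,2)$ contributing $\varpi_{0,2}=0$) yields formula (\ref{EQ8}) after reindexing; this is routine deconcatenation bookkeeping.

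Next I would feed the components through the remark after Proposition \ref{prop9}. Condition (\ref{EQ2}) needs to be checked only for $k+l+n=2$ with $k,l\geq 1$, i.e. the single case $(k,l,n)=(1,1,0)$, where it reads $\varpi((x\shuffle y)\otimes\emptyset)=0$; since $x\shuffle y=xy+yx$ this is exactly the antisymmetry $\{x,y\}=-\{y,x\}$. Condition (\ref{EQ3}) needs only $k+l+n=3$. The cases with $k=0$ are trivial because $\emptyset\bullet v=0$ and $\varpi(\emptyset\otimes-)=0$; the case $(3,0,0)$ has $v=w=\emptyset$ and is trivially symmetric; and since the identity (\ref{EQ3}) is invariant under exchanging $v$ and $w$, only three genuinely distinct cases survive: $(1,1,1)$, $(1,2,0)$ and $(2,1,0)$. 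I would evaluate each by expanding $\bullet$ with (\ref{EQ8}) and using $\emptyset\bullet a=0$, $a\bullet\emptyset=0$ for $a\in V$, together with $ab\bullet\emptyset=\{a,b\}$ for $a,b\in V$. The case $(1,1,1)$ produces $(a*x)*y-a*(x*y)=(a*y)*x-a*(y*x)$, i.e. the preLie identity for $*$; the case $(1,2,0)$ produces $a*\{x,y\}=\{a*x,y\}$; and the case $(2,1,0)$ produces $\{a,b\}*x=\{a*x,b\}+\{a,b*x\}+\{\{a,b\},x\}$. Together with antisymmetry, these are precisely the relations (\ref{EQ7}).

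This settles both implications simultaneously: from a pair $(*,\{-,-\})$ satisfying (\ref{EQ7}) I reconstruct such a $\varpi$ and conclude by Proposition \ref{prop9} that (\ref{EQ8}) defines a Com-PreLie bialgebra, while conversely any degree $-1$ example arises from a unique such $\varpi$, whose two nonzero components are forced to obey (\ref{EQ7}). The main obstacle is purely computational: correctly enumerating the nonzero deconcatenation terms of $u\bullet v$ on short arguments and matching the resulting elements of $V$ with the four relations, while keeping track of the many $(k,l,n)$ cases that collapse to $0$. The conceptual reduction to these finitely many length constraints is already supplied by Proposition \ref{prop9} and its remark, so no further structural input is needed.
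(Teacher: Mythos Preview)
Your proposal is correct and follows essentially the same route as the paper: both reduce to Proposition \ref{prop9}, identify the surviving components $\varpi_{1,1}=*$ and $\varpi_{2,0}=\{-,-\}$ under the degree $-1$ constraint (with $\varpi_{0,2}=0$ forced), then show that (\ref{EQ2}) amounts to antisymmetry while the three nontrivial cases $(k,l,n)=(1,1,1),(1,2,0),(2,1,0)$ of (\ref{EQ3}) yield the remaining identities in (\ref{EQ7}). Your handling of the degenerate cases ($k=0$ and $(3,0,0)$) is slightly more explicit than the paper's, but the argument is the same.
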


\begin{proof} Let us consider a linear map $\varpi:T(V)\otimes T(V)\longrightarrow V$, satisfying (\ref{EQ5}) for $N=-1$.
Denoting by $\varpi_{k,l}=\varpi_{\mid V^{\otimes k}\otimes V^{\otimes l}}$ for any $k,l$, the only possibly nonzero $\varpi_{k,l}$
are for $(k,l)=(2,0)$, $(1,1)$ and $(0,2)$. For all $x,y\in V$, we put 
\begin{align*}
x*y&=\varpi_{1,1}(x\otimes y),&\{x,y\}&=\varpi_{2,0}(xy\otimes \emptyset).
\end{align*}
(\ref{EQ2}) is equivalent to 
\begin{align*}
&\forall w\in V^{\otimes 2},& \varpi_{0,2}(\emptyset\otimes w)&=0,\\
&\forall x,y \in V,& \varpi_{2,0}((xy+yx)\otimes \emptyset)&=0.
\end{align*}
Hence, we now assume that $\varpi_{0,2}=0$, and we obtain that $(\ref{EQ2})$ is equivalent to (\ref{EQ7})-2.
The nullity of $\varpi_{0,2}$ and (\ref{EQ1}) give (\ref{EQ8}).\\

Let us now consider (\ref{EQ3}), with $u\in V^{\otimes k}$, $v\in V^{\otimes l}$, $w\in V^{\otimes n}$, $k+l+n=1-2N=3$.
By symmetry between $v$ and $w$, and by nullity of $\varpi_{0,l}$ for all $l$, we have to consider two cases:
\begin{itemize}
\item $k=l=n=1$. We put $u=x$, $v=y$, $w=z$, with $x,y,z\in V$. Then (\ref{EQ3}) is equivalent to 
\begin{align*}
(x*y)*z-x*(y*z)&=(x*z)*y-x*(z*y),
\end{align*}
that is to say to (\ref{EQ7})-1.
\item $k=1$, $l=2$, $z=0$. We put $u=x$, $v=yz$, $w=\emptyset$, with $x,y,z\in V$. Then (\ref{EQ3}) is equivalent to 
\begin{align*}
\{x*y,z\}-x*\{y,z\}&=0,
\end{align*}
that is to say to (\ref{EQ7})-3.
\item $k=2$, $l=1$, $z=0$. We put $u=xy$, $v=z$, $w=\emptyset$, with $x,y,z\in V$. Then (\ref{EQ3}) is equivalent to 
\begin{align*}
\{x*z,y\}+\{x,y*z\}+\{\{x,y\},z\}&=\{x,y\}*z,
\end{align*}
that is to say to (\ref{EQ7})-4.
\end{itemize}
We conclude with Proposition \ref{prop9}. \end{proof}

\begin{remark} \begin{enumerate}
\item In particular, $*$ is a preLie product on $V$, and  for all $x,y\in V$, $x\bullet y=x*y$.
\item If $x_1,\ldots,x_m\in V$,
\begin{align*}
x_1\ldots x_m\bullet \emptyset&=\sum_{i=1}^{m-1} x_1\ldots x_{i-1} \{x_i,x_{i+1}\}x_{i+2}\ldots x_m.
\end{align*}\end{enumerate}\end{remark}

\begin{example} \begin{enumerate}
\item If $*$ is a preLie product on $V$, we can take $\{-,-\}=0$, and (\ref{EQ7}) is satisfied.
Using the classification of preLie algebras of dimension 2 over $\mathbb{C}$ of \cite{Benes2009}, it is not difficult to show that if the dimension of $V$
is $1$ or $2$, then necessarily $\{-,-\}$ is zero.
\item If $*=0$, then (\ref{EQ7}) becomes 
\begin{align*}
&\forall x,y\in V,& \{x,y\}&=-\{y,x\},\\
&\forall x,y,z\in V,&\{\{x,y\},z\}&=0,
\end{align*}
that is say $(V,\{-,-\})$ is a nilpotent Lie algebra, which nilpotency order is 2.
\item Here is a family of examples where both $*$ and $\{-,-\}$ are nonzero. Let $V$ be 3-dimensional space, with basis $(x,y,z)$, and let 
$a$, $b$, $c$ be scalars. We consider the products given by the following arrays:
\begin{align*}
&\begin{array}{c|c|c|c}
\bullet&x&y&z\\
\hline x&x&y&z\\
\hline y&0&0&0\\
\hline z&0&0&0
\end{array}&
&\begin{array}{c|c|c|c}
\{-,-\}&x&y&z\\
\hline x& 0&ay+bz&cy+(1-a)z\\
\hline y&-ay-bz&0&0\\
\hline z&(a-1)z-cy&0&0
\end{array}&
\end{align*}
Then $(V,\bullet,\{-,-\})$ satisfies (\ref{EQ7}) if, and only if, $a^2-a+bc=0$, or equivalently,
\[(2a-1)^2+(b+c)^2-(b-c)^2=1.\]
This equation defines a hyperboloid of one sheet.
\end{enumerate}\end{example}

\section{Free Com-PreLie algebras and quotients}

\subsection{Description of free Com-PreLie algebras}

We described in \cite{Foissy28} free Com-PreLie algebras in terms of decorated rooted partitioned trees.
We now work with free unitary Com-PreLie algebras.

\begin{defi}\label{defiarbres}\begin{enumerate}
\item A \emph{partitioned forest} is a pair $(F,I)$ such that 
\begin{enumerate}
\item $F$ is a rooted forest (the edges of $F$ being oriented from the roots to the leaves).
The set of its vertices is denoted by $V(F)$.
\item $I$ is a partition of the vertices of $F$ with the following condition:
if $x,y$ are two vertices of $F$ which are in the same part of $I$, then either they are both roots, or they have the same direct ascendant.
\end{enumerate}
The parts of the partition are called \emph{blocks}.
\item We shall say that a partitioned forest $F$ is a \emph{partitioned tree} if all the roots are in the same block.
Note that in this case, one of the blocks of $F$ is the set of roots of $F$. By convention, the empty forest $\emptyset$ is considered
as a partitioned tree.
\item Let $\D$ be a set. A \emph{partitioned tree decorated by $\D$} is a triple $(T,I,d)$, where $(T,I)$ is a partitioned tree and $d$ is a map
from the set of vertices of $T$ into $\D$. For any vertex $x$ of $T$, $d(x)$ is called the \emph{decoration} of $x$.
\item The set of isoclasses of partitioned trees, included the empty tree, will be denoted by $\PT$. 
For any set $\D$, the set of isoclasses of partitioned trees decorated by $\D$ will be denoted by $\PT(\D)$;
the set of isoclasses of partitioned trees decorated by $\N\times \D$ will be denoted by $\UPT(\D)=\PT(\N\times \D)$.
\end{enumerate}\end{defi}

\begin{example} We represent partitioned trees by the underlying rooted forest, the blocks of cardinality $\geq 2$
being represented by horizontal edges of different colors. Here are the partitioned trees with $\leq 4$ vertices:
\[\emptyset;\tun;\tdeux,\hdeux;\ttroisun,\htroisun,\ttroisdeux,\htroisdeux=\htroistrois,\htroisquatre;
\tquatreun, \hquatreun=\hquatredeux,\hquatretrois,\tquatredeux=\tquatretrois,\hquatrequatre=\hquatrecinq,\tquatrequatre,\hquatresix,\tquatrecinq,\]
\[\hquatresept=\hquatrehuit,\hquatreneuf=\hquatredix,\hquatreonze=\hquatredouze,\hquatretreize,\hquatrequatorze=\hquatrequinze=\hquatreseize,
\hquatredixsept.\]
\end{example}

Let us fix a set $\D$. 

\begin{defi} \label{defi13} Let $T=(T,I,d)$ and $T'=(T',J,d') \in \UPT(\D)$. 
\begin{enumerate}
\item The partitioned tree $T\cdot T'$ is defined as follows.
\begin{enumerate}
\item As a rooted forest, $T \cdot T'$ is $TT'$.
\item We put $I=\{I_1,\ldots,I_k\}$ and $J=\{J_1,\ldots,J_l\}$
and we assume that the block of roots of $T$ is $I_1$ and the block of roots of $T'$ is $J_1$. The partition of the vertices of $T \cdot T'$ is
$\{I_1\sqcup J_1,I_2,\ldots,I_k,J_2,\ldots,J_l\}$.
\end{enumerate}
$(\UPT(\D),\cdot)$ is a commutative monoid, of unit $\emptyset$.
\item Let $s$ be a vertex of $T'$. 
\begin{enumerate}
\item We denote by $\bl(s)$ the set of blocks of $T$, children of $s$.
\item Let $b\in \bl(s)\sqcup \{*\}$. We denote by $T\bullet_{s,b} T'$ the partitioned tree obtained in this way:
\begin{itemize}
\item Graft $T'$ on $s$, that is to say add edges from $s$ to any root of $T'$.
\item If $b\in \bl(s)$, join the block $b$ and the block of roots of $T'$.
\end{itemize} 
\item Let $k\in \mathbb{Z}$. The decoration of $s$ is denoted by $(i,d)$. 
The element $T[k]_s\in \UPT(\D)\sqcup\{0\}$ is defined by the following:
\begin{itemize}
\item If $i+k\geq 0$, replace the decoration of $s$ by $(i+k,d)$.
\item If $i+k<0$, $T[k]_s=0$.
\end{itemize}
\end{enumerate} \end{enumerate}\end{defi}

\begin{example} Let $T=\tdeux$, $T'=\tun$. We denote by $r$ the root of $T$ and by $l$ the leaf of $T$. Then 
\begin{align*}
\tdeux\bullet_{r,*} \tun&=\ttroisun,&\tdeux\bullet_{r,\{l\}}\tun&=\htroisun,&\tdeux\bullet_{l,*}\tun&=\ttroisdeux.
\end{align*}\end{example}

\begin{lemma} \label{lemmederivation}
Let $A_+=(A_+,\cdot,\bullet)$ a Com-PreLie algebra, and $f:A_+\longrightarrow A_+$ be a linear map such that 
\begin{align*}
&\forall x,y\in A_+,& f(x\cdot y)&=f(x)\cdot y+x\cdot f(y),\\
&&f(x\bullet y)&=f(x)\bullet y+x\bullet f(y)
\end{align*}
We put $A=A_+\oplus \vect(\emptyset)$. Then $A$ is given a unitary Com-PreLie algebra structure, extending the one of $A_+$, by 
\begin{align*}
&&\emptyset\cdot \emptyset&=\emptyset,&\emptyset\bullet \emptyset&=0,\\
&\forall x\in A_+,&x\cdot \emptyset&=x,&\emptyset\cdot x&=x,\\
&&x\bullet \emptyset&=f(x),&\emptyset\bullet x&=0.
\end{align*}\end{lemma}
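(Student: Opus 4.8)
The plan is to verify directly the four axioms of Definition \ref{defi1} for the extended products on $A=A_+\oplus Vect(\emptyset)$. Since both products are bilinear and every element of $A$ splits uniquely as an element of $A_+$ plus a scalar multiple of $\emptyset$, it suffices to check each identity when every argument $a,b,c$ is either an element of $A_+$ or equals $\emptyset$. The case where all arguments lie in $A_+$ is precisely the corresponding axiom in the Com-PreLie algebra $A_+$ (this also shows the structure genuinely extends that of $A_+$, since the new rules only touch $\emptyset$), so only the ``pure'' cases involving at least one $\emptyset$ need attention. Commutativity and associativity of $\cdot$ are immediate: $\emptyset$ is a two-sided unit, so any case with an $\emptyset$ reduces by the unit law either to an identity already known in $A_+$ or to a triviality.

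For the preLie identity I would first dispose of $a=\emptyset$: since $\emptyset\bullet x=0$ for all $x$ and $0\bullet y=0$ by bilinearity, both sides vanish. For $a\in A_+$ I would exploit the fact that the identity is symmetric under exchanging $b$ and $c$ (swapping $b,c$ interchanges its two sides), so it suffices to treat the unordered pair $\{b,c\}$. When $b,c\in A_+$ it is the axiom in $A_+$; when $b=c=\emptyset$ both sides equal $f^2(a)$. The decisive case is $b\in A_+$, $c=\emptyset$, where the left-hand side is $f(a\bullet b)-a\bullet f(b)$, which equals $f(a)\bullet b$ exactly because $f$ is a derivation for $\bullet$, while the right-hand side is $(a\bullet\emptyset)\bullet b-a\bullet(\emptyset\bullet b)=f(a)\bullet b$. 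Thus the hypothesis $f(x\bullet y)=f(x)\bullet y+x\bullet f(y)$ is precisely what makes this case work.

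For the Leibniz identity $(a\cdot b)\bullet c=(a\bullet c)\cdot b+a\cdot(b\bullet c)$ I would again run through the pure cases. Whenever $c\in A_+$ and one of $a,b$ equals $\emptyset$, the unit law collapses the identity to something already valid in $A_+$ or trivial; and the cases with two or three $\emptyset$'s reduce directly to $0=0$ or $f(a)=f(a)$. The genuinely new case is $c=\emptyset$ with $a,b\in A_+$, where the left-hand side is $f(a\cdot b)$ and the right-hand side is $f(a)\cdot b+a\cdot f(b)$, so the two agree exactly because $f$ is a derivation for $\cdot$.

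The main obstacle is purely organizational rather than conceptual: the real content of the lemma is that the two derivation hypotheses on $f$ are matched one-to-one with the two ``single-$\emptyset$'' cases of the preLie and Leibniz identities, and everything else is the careful but routine bookkeeping of the remaining pure cases, systematically eliminated using $\emptyset\bullet x=0$ and the unit law.
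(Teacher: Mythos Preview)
Your proof is correct and follows essentially the same approach as the paper: a direct case-by-case verification on pure elements, using $\emptyset\bullet x=0$ to kill the $a=\emptyset$ cases, the derivation hypothesis on $f$ for $\bullet$ to handle the mixed preLie case, and the derivation hypothesis on $f$ for $\cdot$ to handle the $c=\emptyset$ Leibniz case. Your explicit use of the $(b,c)$-symmetry of the preLie identity to halve the case analysis is a mild organizational improvement over the paper's presentation, but the substance is identical.
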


\begin{proof} Obviously, $(A,\cdot)$ is a commutative, unitary associative algebra. 
Let us prove the PreLie identity for $x,y,z\in A_+\sqcup \{\emptyset\}$. 
\begin{itemize}
\item If $x=\emptyset$, then $x\bullet (y\bullet z)=(x\bullet y)\bullet z=x\bullet (z\bullet y)=(x\bullet z)\bullet y=0$. We now assume that $x\in A_+$.
\item If $y=z=\emptyset$, then obviously the PreLie identity is satisfied.
\item If $y=\emptyset$ and $z\in A_+$, then 
\begin{align*}
x\bullet (y\bullet z)&=0,&(x\bullet y)\bullet z&=f(x)\bullet z,\\
x\bullet (z\bullet y)&=x\bullet f(z),&(x\bullet z)\bullet y&=f(x\bullet z).
\end{align*}
As $f$ is a derivation for $\bullet$, the PreLie identity is satisfied. By symmetry, it is also true if $y\in A_+$ and $z=\emptyset$.
\end{itemize}
Let us now prove the Leibniz identity for  $x,y,z\in A_+\sqcup \{\emptyset\}$. It is obviously satisfied if $x=\emptyset$ or $y=\emptyset$;
we assume that $x,y\in A_+$. If $z=\emptyset$, then 
\begin{align*}
(x\cdot y)\bullet z&=f(x\cdot y),& (x\bullet z)\cdot y&=f(x)\cdot y,& x\cdot (y\bullet z)&=x\cdot f(y).
\end{align*}
As $f$ is a derivation for $\cdot$, the Leibniz identity is satisfied. \end{proof}

\begin{prop}
Let $\UCP(\D)$ be the vector space generated by $\UPT(\D)$.
We extend $\cdot$ by bilinearity and the PreLie product $\bullet$ is defined by 
\begin{align*}
&\forall T,T'\in \UPT(\D),&T\bullet T'&=\begin{cases}
\displaystyle \sum_{s\in V(t)} T\bullet_{s,*} T'\mbox{ if }t\neq \emptyset,\\
\displaystyle \sum_{s\in V(t)} T[+1]_s\mbox{ if } t=\emptyset.
\end{cases}\end{align*}
Then $\UCP(\D)$ is the free unitary Com-PreLie algebra generated by the the elements $\tdun{$(0,d)$}\hspace{4mm}$, $d\in D$.
\end{prop}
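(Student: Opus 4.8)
The plan is to reduce everything to the description of free non-unitary Com-PreLie algebras from \cite{Foissyprelie} together with the derivation principle of Lemma \ref{lemmederivation}. Write $B$ for the augmentation ideal of $UCP(\D)$, that is the span of the nonempty partitioned trees decorated by $\N\times\D$. First I would recall from \cite{Foissyprelie} that $(B,\cdot,\bullet)$ --- with $\cdot$ the union of forests (merging the root blocks) and $\bullet$ the grafting product $T\bullet T'=\sum_{s\in V(T)}T\bullet_{s,*}T'$ --- is the free non-unitary Com-PreLie algebra generated by the single-vertex trees decorated by $(k,d)$, $k\in\N$, $d\in\D$. This settles in one stroke the commutativity and associativity of $\cdot$ and the preLie and Leibniz identities on $B$, so that the only genuinely new work concerns the unit.

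Next I would introduce the linear map $f:B\longrightarrow B$ given on a tree by $f(T)=\sum_{s\in V(T)}T[+1]_s$, the sum over all ways of raising by one the integer decoration of a single vertex. The key point is that $f$ is a derivation for both products. For $\cdot$ this is immediate, since each vertex of $T\cdot T'$ lies in exactly one factor, giving $f(T\cdot T')=f(T)\cdot T'+T\cdot f(T')$. For $\bullet$ one argues the same way: a vertex of a graft $T\bullet_{s,*}T'$ belongs either to the copy of $T$ or to the grafted copy of $T'$, raising its label commutes with grafting, and summing over $s$ yields $f(T\bullet T')=f(T)\bullet T'+T\bullet f(T')$. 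With this verified, Lemma \ref{lemmederivation} applies and equips $UCP(\D)=B\oplus\K\emptyset$ with a unitary Com-PreLie structure whose right unit action is $T\bullet\emptyset=f(T)=\sum_{s}T[+1]_s$, precisely the product declared in the statement.

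It then remains to establish the universal property. Given a unitary Com-PreLie algebra $A$ with unit $1$ and an arbitrary map $g:\D\longrightarrow A$, set $f_A(a)=a\bullet 1$; using Lemma \ref{lem3} and the preLie identity with third argument $1$, one checks that $f_A$ is a derivation of $A$ for both $\cdot$ and $\bullet$. The images of the single-vertex generators are then forced: since the tree decorated by $(k,d)$ equals $f^k$ of the tree decorated by $(0,d)$, any unitary morphism $\Phi$ must send it to $f_A^k(g(d))$, because $\Phi(x\bullet\emptyset)=\Phi(x)\bullet 1=f_A(\Phi(x))$. I therefore define $\phi_0$ on generators by $\phi_0((k,d))=f_A^k(g(d))$; by freeness of $B$ this extends uniquely to a non-unitary Com-PreLie morphism $\Phi_B:B\longrightarrow A$, and I set $\Phi(\emptyset)=1$. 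The remaining verification is that $\Phi$ respects the right unit action, i.e. $\Phi_B\circ f=f_A\circ\Phi_B$ on $B$. Both composites are $\Phi_B$-twisted derivations for $\cdot$ and $\bullet$ (because $f$ and $f_A$ are derivations and $\Phi_B$ is a morphism for both products), and they agree on generators, since $\Phi_B(f((k,d)))=f_A^{k+1}(g(d))=f_A(\Phi_B((k,d)))$; as the single-vertex trees produce all of $B$ under $\cdot$ and $\bullet$, the two maps coincide. Uniqueness follows from the forced values on generators together with uniqueness of extensions in the free non-unitary algebra $B$.

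The main obstacle I expect is the derivation check for $\bullet$ in the second step: one must verify that raising a single integer decoration commutes with grafting in every case (vertex lying in $T$ versus in the grafted $T'$, irrespective of the block bookkeeping recorded by $\bullet_{s,*}$), and, correspondingly, that the twisted identity $\Phi_B\circ f=f_A\circ\Phi_B$ is stable under both products so that agreement on generators propagates to all of $B$. Everything else is routine once the reduction to \cite{Foissyprelie} and the application of Lemma \ref{lemmederivation} are in place.
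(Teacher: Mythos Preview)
Your proposal is correct and follows essentially the same route as the paper: both reduce to the free non-unitary description of $UCP_+(\D)$ from \cite{Foissyprelie}, verify that $f(T)=\sum_s T[+1]_s$ is a derivation for $\cdot$ and $\bullet$, apply Lemma~\ref{lemmederivation}, and then prove the universal property by sending the generator $\tdun{$(k,d)$}\hspace{4mm}$ to $f_A^k(a_d)$ (the paper writes this as $a_d\bullet 1_A^{\times k}$ via the Guin--Oudom extension, which is the same thing by Lemma~\ref{lemmacoproduit}). Your treatment is in fact slightly more explicit than the paper's on one point: you spell out why the extension to $\emptyset\mapsto 1_A$ is a Com-PreLie morphism, via the commutation $\Phi_B\circ f=f_A\circ\Phi_B$, whereas the paper asserts this without further comment.
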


\begin{proof} We denote by $\UCP_+(\D)$ the subspace of $\UCP(\D)$ generated by nonempty trees. By Proposition 18 in \cite{Foissy28},
this is the free Com-PreLie algebra generated by the elements $\tdun{$(k,d)$}\hspace{4mm}$, $k\in \N$, $d\in \D$. 
We define a map $f:\UCP_+(\D)\longrightarrow \UCP_+(\D)$ by 
\begin{align*}
&\forall T\in \UPT(\D)\setminus \{\emptyset\},& f(T)&=\sum_{s\in V(t)} T[+1]_s.
\end{align*}
This is a derivation for both $\cdot$ and $\bullet$; by lemma \ref{lemmederivation}, $\UCP(\D)$ is a unitary Com-PreLie algebra.\\

Observe that for all $d\in \D$, $k\in \N$,
\[\tdun{$(0,d)$}\hspace{4mm}\bullet \emptyset^{\times k}=\tdun{$(k,d)$}\hspace{4mm}.\]
Let $A$ be a unitary Com-PreLie algebra and, for all $d\in \D$, let $a_d\in A$. By Proposition 18 in \cite{Foissy28},
we define a unique Com-PreLie algebra morphism by
\[\theta:\left\{\begin{array}{rcl}
\UCP_+(\D)&\longrightarrow&A\\
\tdun{$(k,d)$}\hspace{4mm}&\longrightarrow&a_d\times 1_A^{\times k}.
\end{array}\right.\]
We extend it to $\UCP(\D)$ by sending $\emptyset$ to $1_A$, and we obtain in this way a unitary Com-PreLie algebra from $\UCP(\D)$
to $A$, sending $\tdun{$(0,d)$}\hspace{4mm}$ to $a_d$ for any $d\in \D$. This morphism is clearly unique. \end{proof}

\begin{example} Let $i,j,k \in \N$ and $d,e,f \in \D$.
\begin{align*}
\tdun{$(i,d)$}\hspace{4mm} \bullet\tdun{$(j,e)$}\hspace{4mm}&=\tddeux{$(i,d)$}{$(j,e)$}\hspace{4mm},\\
\tdun{$(i,d)$}\hspace{4mm} \bullet\hspace{4mm} \hddeux{\hspace{-4.5mm}$(j,e)$}{$(k,f)$}\hspace{4mm}
&=\hspace{4mm}\hdtroisun{$(i,d)$}{$(k,f)$}{\hspace{-4.5mm}$(j,e)$}\hspace{4mm}\\
\tdun{$(i,d)$}\hspace{4mm} \bullet\tddeux{$(j,e)$}{$(k,f)$}\hspace{4mm}&=\tdtroisdeux{$(i,d)$}{$(j,e)$}{$(k,f)$}\hspace{4mm},\\
\tddeux{$(i,d)$}{$(j,e)$}\hspace{4mm}\bullet \tdun{$(k,f)$}\hspace{4mm}&=\tdtroisdeux{$(i,d)$}{$(j,e)$}{$(k,f)$}\hspace{4mm}
+\hspace{4mm}\tdtroisun{$(i,d)$}{$(k,f)$}{\hspace{-5mm}$(j,e)$}\hspace{3mm},\\
\tdun{$(i,d)$}\hspace{4mm} \bullet \emptyset&=\tdun{$(i+1,d)$}\hspace{8.5mm},\\
\tddeux{$(i,d)$}{$(j,e)$}\hspace{4mm}\bullet \emptyset&=\tddeux{$(i+1,d)$}{$(j,e)$}\hspace{8.5mm}+\tddeux{$(i,d)$}{$(j+1,e)$}\hspace{8.5mm},\\
\hspace{4mm}\tdtroisun{$(i,d)$}{$(k,f)$}{\hspace{-5mm}$(j,e)$}\hspace{3mm}\bullet \emptyset&=
\hspace{4mm}\tdtroisun{$(i+1,d)$}{$(k,f)$}{\hspace{-5mm}$(j,e)$}\hspace{6mm}
+\hspace{8mm}\tdtroisun{$(i,d)$}{$(k,f)$}{\hspace{-9mm}$(j+1,e)$}\hspace{4mm}
+\hspace{4mm}\tdtroisun{$(i,d)$}{$(k+1,f)$}{\hspace{-5mm}$(j,e)$}\hspace{4mm}.
\end{align*}\end{example}

\subsection{Quotients of $\UCP(\D)$}

\begin{prop}\label{prop16}
We put $V_0=\vect(\tdun{$(0,d)$}\hspace{4mm},d\in \D)$, identified with $\vect(\tdun{$d$},d\in \D)$. 
Let $f:V_0\longrightarrow V_0$ be any linear map. 
We consider the Com-PreLie ideal $I_f$ of $\UCP(\D)$ generated by the elements $\tdun{$(1,d)$}\hspace{4mm}-f(\tdun{$(0,d)$}\hspace{4mm})$, $d\in \D$.
\begin{enumerate}
\item We denote by $\UPT'(\D)$ the set of trees $T\in \UPT(\D)$ such that for any vertex $s$ of $T$, the decoration of $s$ is of the form
$(0,d)$, with $d\in \D$. It is trivially identified with $\PT(\D)$. Then the family $(T+I_f)_{T\in \UPT'(\D)}$ is a basis of $\UCP(\D)/I_f$.
\item In $\UCP(\D)/I_f$, for any $d\in \D$, $\tdun{$(0,d)$}\hspace{4mm}\bullet \emptyset=f(\tdun{$(0,d)$}\hspace{4mm})$. 
\end{enumerate}\end{prop}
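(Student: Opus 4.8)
The second assertion is immediate and I would dispose of it first: by the very definition of $I_f$ we have $\tun_{(0,d)}\bullet\emptyset=\tun_{(1,d)}$ and $\tun_{(1,d)}-f(\tun_{(0,d)})\in I_f$, so in $UCP(\D)/I_f$ the element $(\tdun{$d$}+I_f)\bullet\emptyset$ equals $\tun_{(1,d)}+I_f=f(\tdun{$d$})+I_f$. The real content is assertion (1), and the natural way to prove it is to identify $UCP(\D)/I_f$, as a unitary Com-PreLie algebra, with an explicit model $B$ carried by $Vect(\UPT'(\D))=Vect(\PT(\D))$. The plan is to build $B$ by hand, and then show that $B$ and $UCP(\D)/I_f$ share the same universal property, so that the resulting isomorphism transports the tree basis of $B$ to the family $(T+I_f)_{T\in\UPT'(\D)}$.

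To construct $B$ I would let $A_+$ be the free Com-PreLie algebra generated by $\{\tdun{$d$}\}_{d\in\D}$, which by \cite{Foissyprelie} has for basis the nonempty partitioned trees decorated by $\D$, i.e. $\UPT'(\D)\setminus\{\emptyset\}$. On $A_+$ define the linear map $D_f$ that, given a tree, replaces the decoration $d$ of each vertex $s$ by $f(\tdun{$d$})$ and sums the results over the vertices. Since $D_f$ modifies only decorations, whereas both $\cdot$ (disjoint union) and $\bullet$ (grafting) leave decorations unchanged, the sum over vertices splits across the two factors of any product; hence $D_f$ is a derivation for $\cdot$ and for $\bullet$. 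Lemma \ref{lemmederivation} then makes $B:=A_+\oplus Vect(\emptyset)$ a unitary Com-PreLie algebra with $T\bullet\emptyset=D_f(T)$, so in particular $\tdun{$d$}\bullet\emptyset=f(\tdun{$d$})$.

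Next I would record the key algebraic fact that in any unitary Com-PreLie algebra $A$ with unit $1$ the operator $\delta_A:a\mapsto a\bullet 1$ is a derivation for both products: specializing $c=1$ in the Leibniz identity gives the $\cdot$-derivation rule, and specializing $c=1$ in the preLie identity, using $1\bullet a=0$ from Lemma \ref{lem3}, gives the $\bullet$-derivation rule. Now call a family $(a_d)_{d\in\D}$ in $A$ \emph{admissible} if $a_d\bullet 1=\sum_e c_{d,e}\,a_e$, where $f(\tdun{$d$})=\sum_e c_{d,e}\,\tdun{$e$}$. I would then check that both $UCP(\D)/I_f$, with family $(\tdun{$d$}+I_f)$, and $B$, with family $(\tdun{$d$})$, are universal among pairs (unitary Com-PreLie algebra, admissible family). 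For $UCP(\D)/I_f$ this uses freeness of $UCP(\D)$: an admissible $(A,(a_d))$ yields a unique morphism $UCP(\D)\to A$ sending $\tdun{$(0,d)$}$ to $a_d$, admissibility forces it to kill every generator $\tun_{(1,d)}-f(\tun_{(0,d)})$ of $I_f$, and by (2) its own family is admissible. For $B$, freeness of $A_+$ gives a unique Com-PreLie morphism $\phi_+:A_+\to A$ with $\tdun{$d$}\mapsto a_d$, which I extend by $\emptyset\mapsto 1$; the only point to verify is $\phi(T\bullet\emptyset)=\phi(T)\bullet 1$, i.e. $\phi_+\circ D_f=\delta_A\circ\phi_+$, and here both sides are $\phi_+$-derivations $A_+\to A$ agreeing on the generators (each sends $\tdun{$d$}$ to $f(a_d)=\delta_A(a_d)$), hence they coincide on all of $A_+$.

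Having two representing objects for the same functor yields a unique isomorphism $\Psi:UCP(\D)/I_f\to B$ of unitary Com-PreLie algebras with $\Psi(\tdun{$d$}+I_f)=\tdun{$d$}$; since $\Psi$ respects $\cdot$ and $\bullet$, it sends each class $T+I_f$ with $T\in\UPT'(\D)$ (built from the generators $\tdun{$(0,d)$}$ by graftings) to the same tree of $B$, namely $T$. As $\UPT'(\D)$ is a basis of $B$, the family $(T+I_f)_{T\in\UPT'(\D)}$ is a basis of $UCP(\D)/I_f$, which is (1). I expect the main obstacle to be exactly the compatibility $\phi_+\circ D_f=\delta_A\circ\phi_+$: the conceptual hurdle is to see that the abstract unit-action $-\bullet 1$ in an arbitrary target is implemented by the concrete decoration-shifting derivation $D_f$, which rests on the derivation property of $-\bullet 1$ together with the fact that derivations of a free Com-PreLie algebra are determined by their values on generators. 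I would deliberately avoid the hands-on alternative of proving directly that every $T$ is congruent modulo $I_f$ to the tree obtained by replacing each decoration $(k,d)$ with $f^k(\tdun{$d$})$: reducing a single over-decorated leaf via $T_0\bullet(\tun_{(1,d)}-f(\tun_{(0,d)}))$ simultaneously grafts the new leaf onto \emph{every} vertex of $T_0$, coupling $T$ to other trees of the same degree and vertex number, so that the induction does not obviously terminate; the universal-property argument sidesteps this difficulty completely.
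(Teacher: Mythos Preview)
Your proof is correct. Both you and the paper build the same explicit model $B=A_+\oplus Vect(\emptyset)$ via Lemma~\ref{lemmederivation} and the derivation $D_f$ (called $F$ in the paper's third step), and both use it to obtain linear independence. The difference lies in how spanning is handled: the paper proves directly, in its first two steps, that every $T\in\UPT(\D)$ is congruent modulo $I_f$ to an element of $Vect(\UPT'(\D))$, by induction on $|T|$ using the Oudom--Guin decomposition $T=\tun_{(k,d)}\bullet(T_1\times\cdots\times T_k)$ to isolate the root; you instead establish that $UCP(\D)/I_f$ and $B$ both represent the functor of admissible families, and read spanning off from the resulting isomorphism.

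Your categorical route is clean and avoids bookkeeping, but your closing remark about the hands-on alternative is too pessimistic: the difficulty you anticipate (that grafting $\tun_{(1,d)}-f(\tun_{(0,d)})$ onto $T_0$ hits every vertex simultaneously) is exactly what the Oudom--Guin extension circumvents. Writing $T=\tun_{(k,d)}\bullet(T_1\times\cdots\times T_k)$ separates the root cleanly from the branches, and the paper's first step reduces $\tun_{(k,d)}$ to $f^k(\tun_{(0,d)})$ before any grafting happens, so the induction does terminate straightforwardly. One small point to make explicit in your argument: that $\Psi(T+I_f)=T$ for every $T\in\UPT'(\D)$ follows because the inclusion $A_+\hookrightarrow UCP_+(\D)$, $\tdun{$d$}\mapsto\tdun{$(0,d)$}$, is a Com-PreLie morphism sending each basis tree to the corresponding basis tree (same combinatorics of grafting and disjoint union), so the composite $A_+\to UCP(\D)/I_f\xrightarrow{\Psi}B$ is the identity on generators and hence on all of $A_+$.
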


\begin{proof} \textit{First step}.  We fix $d\in \D$. Let us first prove that for all $k \geq 0$,
\[\tdun{$(k,d)$}\hspace{4mm}+I_f=f^k(\tdun{$(0,d)$}\hspace{4mm})+I_f.\]
It is obvious if $k=0,1$. 
Let us assume the result at rank $k-1$. We put $f(\tdun{$(0,d)$}\hspace{4mm})=\sum a_e \tdun{$(0,e)$}\hspace{4mm}$. Then 
\begin{align*}
\tdun{$(k,d)$}\hspace{4mm}+I_f&=\tdun{$(1,d)$}\hspace{4mm}\bullet \emptyset^{\times (k-1)}+I_f\\
&=\sum a_e \tdun{$(0,e)$}\hspace{4mm}\bullet \emptyset^{\times (k-1)}+I_f\\
&=\sum a_e f^{k-1}(\tdun{$(0,e)$}\hspace{4mm})+I_f\\
&=f^k(\tdun{$(0,d)$}\hspace{4mm})+I_f,
\end{align*}
so the result holds for all $k$.\\

\textit{Second step}. Let $T\in \UPT(\D)$; let us prove that there exists $x\in \vect(\UPT'(\D))$, such that $T+I_f=x+I_f$.
We proceed by induction on $|T|$. If $|T|=0$, then $t=\emptyset$ and we can take $x=T$. If $|T|=1$, then $T=\tdun{$(k,d)$}\hspace{4mm}$ and we can take,
by the first step, $x=f^k(\tdun{$(0,d)$}\hspace{4mm})$. Let us assume the result at all ranks $<|T|$. If $T$ has several roots,
we can write $T=T_1\cdot T_2$, with $|T_1|,|T_2|<|T|$. Hence, there exists $x_i\in \vect(\UPT'(\D))$, such that $T_i+I_f=x_i+I_f$ for all $i\in [2]$,
and we take $x=x_1\cdot x_2$. Otherwise, we can write 
\[T=\tdun{$(k,d)$}\hspace{4mm} \bullet T_1\times \ldots \times T_k,\]
where $T_1,\ldots,T_k\in \UPT(\D)$. By the induction hypothesis, there exists $x_i\in \vect(\UPT'(\D))$ such that $T_i+I_f=x_i+I_f$ for all $i\in [k]$. 
We then take $x=f^k(\tdun{$(0,d)$}\hspace{4mm})\bullet x_1\times \ldots \times x_k$. \\

\textit{Third step.} We give $\CP_+(\D)=\vect(\PT(\D)\setminus \{\emptyset\})$ a Com-PreLie structure by 
\begin{align*}
&\forall T,T'\in \PT(\D)\setminus \{\emptyset\},&T\bullet T'&=\sum_{s\in V(t)} T\bullet_{s,*} T'.
\end{align*}
We consider the map 
\[F:\left\{\begin{array}{rcl}
\CP_+(\D)&\longrightarrow&\CP_+(\D)\\
T&\longrightarrow&\displaystyle \sum_{s\in V(T)} f_s(T),
\end{array}\right.\]
where,  $f_s(T)$ is the linear span of decorated partitioned trees obtained by replacing the decoration $d_s$ of $s$ by $f(d_s)$, the trees
being considered as linear in any of their decorations. This is a derivation for both $\cdot$ and $\bullet$, so by lemma \ref{lemmederivation},
$\CP(\D)$ inherits a unitary Com-PreLie structure such that for any $d\in \D$,
\[\tdun{$d$}\bullet \emptyset=f(\tdun{$d$}).\]
By the universal property of $\UCP(\D)$, there exists a unique unitary Com-PreLie algebra morphism $\phi:\UCP(\D)\longrightarrow \CP(\D)$,
such that $\phi(\tdun{$(0,d)$}\hspace{4mm})=\tdun{$d$}$ for any $d\in \D$. Then $\phi(\tdun{$(1,d)$}\hspace{4mm})=f(\tdun{$d$}))=\phi(f(\tdun{$(0,d)$}\hspace{4mm})$ for any $d\in D$,
so $\phi$ induces a morphism $\overline{\phi}:\UCP(\D)/I_f\longrightarrow \CP(\D)$. It is not difficult to prove that for any $T\in \UPT'(\D)$,
$\phi(T)=T$. As the family $\PT(\D)$ is a basis of $\CP(\D)$, the family $(T+I_f)_{T\in \UPT'(\D)}$ is linearly independent in $\UCP(\D)/I_f$.
By the second step, it is a basis. \end{proof}

\begin{example} We choose $f=\id_{V_0}$. The product in $\UCP(\D)/{I_{\id_{V_0}}}$ is the one of Definition \ref{defi13}. 
If $T,T'\in \PT(\D)$ and $T'\neq \emptyset$, $T\bullet T'$ is the sum of all graftings of $T'$ over $T$. Moreover,
\[T\bullet \emptyset=|T|T.\]
Hence, we now consider $\CP(\D)$, augmented by an unit $\emptyset$, as a unitary Com-PreLie algebra.
\end{example}

\begin{prop}\label{prop17}
Let $J$ be the Com-PreLie ideal of $\CP(\D)$ generated by the elements 
\[\tdun{$d$} \bullet (F_1\times F_2)-
\tdun{$d$}\bullet (F_1\cdot F_2),\]
with $d\in \D$ and $F_1,F_2 \in \PT(\D)$. 
\begin{enumerate}
\item Let $T$ and $T'$ be two elements of $\PT(\D)$ which are equal as decorated rooted forests. Then $T+J=T'+J$. Consequently,
if $F$ is a decorated rooted forest, the element $T'+I$ does not depend of the choice of 
$T'\in \UPT(\D)$ such that $T'=F$ as a decorated rooted forest.  This element is identified with $F$.
\item The set of decorated rooted forests is a basis of $\UCP(\D)/J$. 
\end{enumerate}
$\CP(\D)/J$ is then, as an algebra, identified with the Connes-Kreimer algebra $H_{\CK}^{\D}$ of decorated rooted trees 
\cite{Connes98}, which is in this way a unitary Com-PreLie algebra.\end{prop}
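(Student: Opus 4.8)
The plan is to first make the two families of generators of $J$ explicit as grafting operations, then to prove part (1) by a double induction collapsing any block structure to its underlying forest, and finally to deduce part (2) by comparing $CP(\D)/J$ with an independently defined Com-PreLie structure on rooted forests.

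I would begin by computing the generators. For $F_1,F_2\in\PT(\D)$, the Oudom--Guin relation $a\bullet(a_1\times a_2)=(a\bullet a_1)\bullet a_2-a\bullet(a_1\bullet a_2)$ shows, after the terms grafting $F_2$ inside $F_1$ cancel against $\tdun{$d$}\bullet(F_1\bullet F_2)$, that $\tdun{$d$}\bullet(F_1\times F_2)$ is the partitioned tree with root $d$ carrying $F_1$ and $F_2$ as two \emph{distinct} child-blocks, whereas $\tdun{$d$}\bullet(F_1\cdot F_2)$ is the same decorated forest with the root-blocks of $F_1$ and $F_2$ \emph{joined}. Thus each generator of $J$ records exactly the merging of two sibling blocks lying immediately below a root.

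The technical core of part (1) is a stability lemma for the extended product: if $j\in J$ and $w_1,\ldots,w_k\in CP(\D)$, then $a\bullet(w_1\times\cdots\times w_k\times j)\in J$ for every $a$. This follows by induction on $k$ from the same Oudom--Guin relation, since $(a\bullet(w_1\times\cdots\times w_k))\bullet j\in CP(\D)\bullet J\subseteq J$ while each $w_i\bullet j\in J$ feeds the induction. Granting this, I prove that every partitioned tree is congruent modulo $J$ to the canonical tree $\widehat{T}$ having the same underlying decorated forest and in which all children of each vertex form a single block; as $\widehat{T}$ depends only on the forest, this yields part (1). Writing $T=\tdun{$d_0$}\bullet(F_1\times\cdots\times F_m)$ (root $d_0$, sibling subforests $F_i$), an induction on $|T|$ together with the stability lemma first replaces each $F_i$ by $\widehat{F_i}$, so it only remains to merge the $m$ root-blocks, i.e. to show $\tdun{$d_0$}\bullet(F_1\times\cdots\times F_m)\equiv\tdun{$d_0$}\bullet(F_1\cdots F_m)\pmod J$ for all $m$ and all subforests. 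I prove this by induction on $m$: the case $m=2$ is a generator, and for the step I peel off $F_{m+1}$ with the Oudom--Guin relation; the term $(\tdun{$d_0$}\bullet\delta)\bullet F_{m+1}$, where $\delta=F_1\times\cdots\times F_m-F_1\cdots F_m$ satisfies $\tdun{$d_0$}\bullet\delta\in J$ by hypothesis, lands in $J$, and the remaining terms are instances of the $m$-block hypothesis applied to enlarged subforests. \emph{Propagating the one-vertex generator so as to merge blocks at an arbitrary vertex, with arbitrarily many sibling blocks already present, is the main obstacle}, and it is precisely this nested induction that overcomes it.

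For part (2) I would equip the decorated rooted forests with the Com-PreLie structure of $H_{CK}^{\D}$: the product $\cdot$ is disjoint union, $T\bullet T'=\sum_{s\in V(T)}(\text{graft }T'\text{ at }s)$ for $T'\neq\emptyset$, and $T\bullet\emptyset=|T|\,T$. The block-forgetting map $\varphi:CP(\D)\to H_{CK}^{\D}$ is then a morphism of unitary Com-PreLie algebras, and for each generator its two terms $\tdun{$d$}\bullet(F_1\times F_2)$ and $\tdun{$d$}\bullet(F_1\cdot F_2)$ have the same image once the blocks are forgotten, so $J\subseteq\ker\varphi$ and $\varphi$ induces $\overline{\varphi}:CP(\D)/J\to H_{CK}^{\D}$. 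By part (1) the classes of all partitioned trees sharing a given underlying forest coincide, so $CP(\D)/J$ is spanned by these forest-classes; since $\overline{\varphi}$ carries them bijectively onto the forest basis of $H_{CK}^{\D}$, they are linearly independent, hence a basis, and $\overline{\varphi}$ is an isomorphism. This establishes the basis statement and identifies $CP(\D)/J$ with the Connes--Kreimer algebra $H_{CK}^{\D}$, transporting along $\overline{\varphi}$ its unitary Com-PreLie structure.
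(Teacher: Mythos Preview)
Your argument is essentially correct and follows the same strategy as the paper: an Oudom--Guin induction to reduce $\tdun{$d$}\bullet(F_1\times\cdots\times F_m)$ to $\tdun{$d$}\bullet(F_1\cdots F_m)$ modulo $J$, an induction on the number of vertices to collapse the block structure, and for part~(2) a comparison with a Com-PreLie structure built directly on rooted forests.

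Two remarks. First, the paper organizes part~(1) in the opposite order: it proves the merging identity $\tdun{$d$}\bullet(x_1\times\cdots\times x_n)+J=\tdun{$d$}\bullet(x_1\cdots x_n)+J$ for \emph{arbitrary} $x_i$ first (your induction on $m$, but stated upfront), and only then runs the induction on $|T|$. This ordering makes your stability lemma unnecessary: once the merging identity holds for all inputs, you simply replace $\times$ by $\cdot$ at the root and then invoke the induction hypothesis on the single element $F_1\cdots F_k$, which has fewer vertices. Your route is valid but slightly longer.

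Second, there is a small gap: your decomposition $T=\tdun{$d_0$}\bullet(F_1\times\cdots\times F_m)$ only applies to partitioned trees with a single root, whereas elements of $\PT(\D)$ may have several roots (all in one block). The multi-root case is handled by writing $T=T_1\cdot\ldots\cdot T_k$ with each $T_i$ single-rooted and applying the induction hypothesis to each factor; the paper makes this explicit. For part~(2), your map $\varphi$ to $H_{CK}^{\D}$ plays exactly the role of the paper's quotient by the ideal $J'$ spanned by differences of partitioned trees with the same underlying forest; the two presentations are equivalent.
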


\begin{proof} 1. \textit{First step.} Let us show that for any $x_1,\ldots,x_n \in \UCP(\D)$,
$\tdun{$d$}\bullet (x_1 \times \ldots \times x_n)+J=\tdun{$d$}\bullet (x_1\cdot \ldots \cdot x_n)+J$ by induction on $n$.
It is obvious if $n=1$, and it comes from the definition of $J$ if $n=2$. Let us assume the result at rank $n-1$. 
\begin{align*}
&\tdun{$d$}\bullet (x_1\times \ldots \times x_n)+J\\
&=(\tdun{$d$} \bullet (x_1\times \ldots \times x_{n-1}))\bullet x_n
-\sum_{i=1}^{n-1} \tdun{$d$}\bullet (x_1 \times \ldots \times (x_i \bullet x_n) \times \ldots \times x_{n-1})+J\\
&=(\tdun{$d$} \bullet (x_1\cdot \ldots \cdot x_{n-1}))\bullet x_n
-\sum_{i=1}^{n-1} \tdun{$d$}\bullet (x_1 \cdot \ldots \cdot (x_i \bullet x_n) \cdot \ldots \cdot x_{n-1})+J\\
&=(\tdun{$d$} \bullet (x_1\cdot \ldots \cdot x_{n-1}))\bullet x_n
-\tdun{$d$}\bullet ((x_1 \cdot \ldots \cdot x_{n-1})\bullet x_n)+J\\
&=\tdun{$d$}\bullet((x_1\cdot \ldots \cdot x_{n-1})\times x_n)+J\\
&=\tdun{$d$}\bullet (x_1\cdot \ldots x_{n-1}\cdot x_n)+J.
\end{align*}
So the result holds for all $n$. \\

\textit{Second step.} Let $F,G \in \PT(\D)$, such that the underlying rooted decorated forests are equal. Let us prove that $F+J=G+J$
by induction on $n=|F|=|G|$. If $n=0$, $F=G=1$ and it is obvious. If $n=1$, $F=G=\tdun{$d$}$ and it is obvious. Let us assume
the result at all ranks $<n$. 

\textit{First case}. If $F$ has $k \geq 2$ roots, we can write $F=T_1\cdot \ldots \cdot T_k$ and $G=T'_1\cdot \ldots \cdot T'_k$,
such that, for all $i\in [k]$, $T_i$ and $T'_i$ have the same underlying decorated rooted forest; By the induction hypothesis,
$T_i+J=T'_i+J$ for all $i$, so $F+J=G+J$.

\textit{Second case.} Let us assume that $F$ has only one root. We can write $F=\tdun{$d$}\bullet(F_1\times \ldots \times F_k)$
and $G=\tdun{$d$} \bullet (G_1\times \ldots \times G_l)$. Then $F_1\cdot \ldots \cdot F_k$ and $G_1\cdot \ldots \cdot G_l$
have the same underlying decorated forest; by the induction hypothesis, $F_1\cdot \ldots \cdot F_k+J=G_1\cdot \ldots \cdot G_l+J$,
so $\tdun{$d$}\bullet ( F_1\cdot \ldots \cdot F_k)+J=\tdun{$d$}\bullet(G_1\cdot \ldots \cdot G_l)+J$. By the first step,
\[F+J=\tdun{$d$}\bullet ( F_1\cdot \ldots \cdot F_k)+J=\tdun{$d$}\bullet(G_1\cdot \ldots \cdot G_l)+J=G+J.\]

2. The set $\RF(\D)$ of rooted forests linearly spans $\CP(\D)/J$ by the first point.
Let $J'$ be the subspace of $\CP(\D)$ generated by the differences of elements of $\PT(\D)$ with the same underlying decorated forest.
It is clearly a Com-PreLie ideal, and $\RF(\D)$ is a basis of $\CP(\D)/J'$. Moreover, for all $F_1,F_2 \in \PT(\D)$,
$\tdun{$d$}\bullet(F_1 \times F_2)+J'=\tdun{$s$}\bullet(F_1\cdot F_2)+J'$, as the underlying forests of 
$\tdun{$d$}\bullet(F_1 \times F_2)$ and $\tdun{$s$}\bullet(F_1\cdot F_2)$ are equal. Consequently, there exists a Com-PreLie morphism
from $\CP(\D)/J$ to $\CP(\D)/J'$, sending any element of $\RF(\D)$ over itself. As the elements of $RF(\D)$ are linearly independent in $\CP(\D)/J'$,
they also are in $\CP(\D)/J$.  \end{proof}

\subsection{PreLie structure of $\UCP(\D)$ and $\CP(\D)$}

Let us now consider $\UCP(\D)$ and $\CP(\D)$ as preLie algebras. Their augmentation ideals are respectively denoted by $\UCP_+(\D)$
and $\CP_+(\D)$. Note that, as a preLie algebra, $\UCP_+(\D)=\CP_+(\N\times \D)$.

Let $\D$ be any set, and let $T\in \PT(\D)$. Then $T$ can be written as 
\[T=\left(\tdun{$d_1$}\:\bullet (T_{1,1}\times\ldots \times T_{i,s_1})\right)\cdot \ldots \cdot 
\left(\tdun{$d_k$}\:\bullet (T_{k,1}\times\ldots \times T_{k,s_k})\right),\]
where $d_1,\ldots,d_k \in \D$ and the $T_{i,j}$'s are nonempty elements of $\PT(\D)$.
We shortly denote this as 
\[T=B_{d_1,\ldots,d_k}(T_{1,1}\ldots T_{1,s_1};\ldots;T_{k,1}\ldots T_{k,s_k}).\]
The set of partitioned subtrees $T_{i,j}$ of $T$ is denoted by $\st(T)$.

\begin{prop}
Let $\D$ be any set. One defines a coproduct $\delta$ on $\CP_+(\D)$ by 
\begin{align*}
&\forall T\in \PT(\D),&\delta(T)&=\sum_{T'\in \st(T)} T\setminus T'\otimes T.
\end{align*}
Then, as a preLie algebra, $\CP_+(\D)$ is freely generated by $\ker(\delta)$.
\end{prop}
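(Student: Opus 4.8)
The plan is to deduce the statement from Livernet's rigidity theorem for pre-Lie algebras, which asserts that a connected pre-Lie bialgebra is canonically isomorphic to the free pre-Lie algebra on its space of primitive elements. Concretely, I would show that $(CP_+(\D),\bullet,\delta)$ is such a connected pre-Lie bialgebra, whose primitives are precisely $Ker(\delta)$; the freeness then follows at once. This reduces the work to two independent verifications: the compatibility of the coproduct $\delta$ with the grafting product $\bullet$, and the connectedness (conilpotency) of the coalgebra $(CP_+(\D),\delta)$.

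The compatibility is the core computation. Starting from $T\bullet U=\sum_{s\in V(T)}T\bullet_{s,*}U$, I would compute $\delta(T\bullet U)$ by sorting the grafting sites $s$ according to their position in $T$ and then cutting the top branches of each $T\bullet_{s,*}U$. Two regimes occur. If $s$ is a root of $T$, then $U$ is attached as a brand new top branch (its roots forming a fresh block, since we use $\bullet_{s,*}$), so that removing exactly this branch returns $T$ on the left and $U$ on the right; these contributions assemble into a boundary term supported on $T\otimes U$. If instead $s$ lies inside a branch $T'\in st(T)$, the grafting merely enlarges $T'$ into a subtree of the form $T'\bullet U$ while the remaining branches are unchanged; cutting the enlarged branch produces $T^{(1)}\otimes(T^{(2)}\bullet U)$, and cutting any untouched branch produces $(T^{(1)}\bullet U)\otimes T^{(2)}$, where $\delta(T)=T^{(1)}\otimes T^{(2)}$. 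Regrouping, one obtains a relation of the shape
\[\delta(T\bullet U)=T^{(1)}\otimes(T^{(2)}\bullet U)+(T^{(1)}\bullet U)\otimes T^{(2)}+(\text{root term})\otimes U,\]
which is exactly the compatibility axiom of a pre-Lie bialgebra. The delicate point here, and the step I expect to cost the most care, is the combinatorial bookkeeping: one must track the partition blocks through each grafting (so that a cut branch is recovered with its block structure intact) and correctly account for the multiplicity coming from the several roots of $T$, in order to land precisely on the boundary term required by Livernet's hypotheses.

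For connectedness, I would grade $CP_+(\D)$ by the number of vertices. Since both $T\setminus T'$ and $T'$ have strictly fewer vertices than $T$ for every $T'\in st(T)$, the coproduct $\delta$ strictly lowers this grading in each tensor factor; hence its iterates eventually vanish on every element and the coalgebra is connected. It remains to identify the primitives: $\delta(T)=0$ exactly when $st(T)=\emptyset$, i.e. when every vertex of $T$ is a root, so that $Ker(\delta)$ is spanned by the one-block trees (a block of decorated roots carrying no descendants). With the compatibility and connectedness in hand, Livernet's rigidity theorem applies and yields that $CP_+(\D)$ is the free pre-Lie algebra generated by $Ker(\delta)$. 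The main obstacle is therefore concentrated in the compatibility computation of the second paragraph; connectedness and the description of $Ker(\delta)$ are routine.
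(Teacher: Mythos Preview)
Your overall strategy---apply Livernet's rigidity theorem---is the same as the paper's, but there is a genuine gap in the compatibility step. When $s$ ranges over the roots of $T$, each root produces a copy of $T\otimes U$ after grafting $U$ and then cutting it back off; since $T$ has $r(T)$ roots, the boundary term you actually obtain is $r(T)\,T\otimes U$, not $T\otimes U$. In other words, the identity you get is
\[
\delta(x\bullet y)=d(x)\otimes y+x^{(1)}\bullet y\otimes x^{(2)}+x^{(1)}\otimes x^{(2)}\bullet y,
\]
where $d$ is the derivation $T\mapsto r(T)T$. This is \emph{not} Livernet's compatibility, and you cannot ``land precisely'' on her hypothesis just by careful bookkeeping: the factor $r(T)$ is genuinely there. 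The paper repairs this by introducing the rescaled coproduct $\delta'(T)=\dfrac{1}{r(T)}\,\delta(T)$, checking that $\delta'$ is still permutative, that it now satisfies the exact Livernet compatibility, and finally that $Ker(\delta)=Ker(\delta')$ (because $\delta$ preserves the grading by number of roots, so the rescaling is blockwise by a nonzero scalar). Without this rescaling step your argument does not go through.

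A second, smaller issue: your identification of $Ker(\delta)$ with the span of one-block trees (all vertices roots) is incorrect. For basis trees it is true that $\delta(T)=0$ iff $st(T)=\emptyset$, but $Ker(\delta)$ also contains nontrivial linear combinations of trees with branches; for $|\D|=1$ one already has $\dim Ker(\delta)_4=2$ while there is only one all-root tree in degree~$4$. Fortunately the statement does not require describing $Ker(\delta)$ explicitly, so this does not need to be fixed---just removed.
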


\begin{proof}
In other words, for any $T\in \PT(\D)$, writing
\[T=B_{d_1,\ldots,d_k}(T_{1,1}\ldots T_{1,s_1};\ldots; T_{k,1}\ldots T_{k,s_k}).\]
we can rewrite
\[\delta(T)=\sum_{i=1}^s \sum_{j=1}^{s_i}
B_{d_1,\ldots,d_k}(T_{1,1}\ldots T_{1,s_1};\ldots; T_{i,1}\ldots \widehat{T_{i,j}}\ldots T_{i,s_i};\ldots;T_{k,1}\ldots T_{k,s_k})\otimes T_{i,j}.\]
This immediately implies that $\delta$ is permutative \cite{Livernet2006}:
\[(\delta\otimes \id)\circ \delta=(23). (\delta \otimes \id)\circ \delta.\]
Moreover, for any $x,y\in \PT_+(\D)$, using Sweedler's notation $\delta(x)=x^{(1)}\otimes x^{(2)}$, we obtain 
\[\delta(x\cdot y)=x^{(1)}\cdot y\otimes x^{(2)}+x\cdot y^{(1)}\otimes y^{(2)}.\]

For any partitioned tree $T\in \PT(\D)$, we denote by $r(T)$ the number of roots of $T$ and we put $d(T)=r(T)T$.
The map $d$ is linearly extended as an endomorphism of $\PT(\D)$. As the product $\cdot$ is homogeneous for the number of roots,
$d$ is a derivation of the algebra $(\CP(\D),\cdot)$. Let us prove that for any $x,y\in \CP_+(\D)$,
\[\delta(x\bullet y)=d(x)\otimes y+x^{(1)}\bullet y\otimes x^{(2)}+x^{(1)}\otimes x^{(2)}\bullet y.\]
We denote by $A$ the set of elements of $x\in \CP_+(\D)$, such that for any $y\in \CP_+(\D)$, the preceding equality holds.
If $x_1,x_2\in A$, then for any $y\in \CP_+(\D)$,
\begin{align*}
\delta((x_1\cdot x_2)\bullet y)&=\delta((x_1\bullet y)\cdot x_2)+\delta(x_1\cdot (x_2\bullet y))\\
&=(x_1\bullet y)^{(1)}\cdot x_2\otimes(x_1\bullet y)^{(2)}+(x_1\bullet y)\cdot x_2^{(1)}\otimes x_2^{(2)}\\
&+x_1^{(1)}\cdot (x_2\bullet y)\otimes x_1^{(2)}+x_1\cdot (x_2\bullet y)^{(1)}\otimes (x_2\bullet y)^{(2)}\\
&=d(x_1)\cdot x_2 \otimes y+(x_1^{(1)} \bullet y)\cdot x_2\otimes x_1^{(1)}+x_1^{(1)}\cdot x_2\otimes x_1^{(2)}\bullet y\\
&+(x_1\bullet y)\cdot x_2^{(1)}\otimes x_2^{(2)}+x_1^{(1)}\cdot (x_2\bullet y)\otimes x_1^{(2)}\\
&+x_1\cdot d(x_2)\otimes y+x_1\cdot (x_2^{(1)}\bullet y)\otimes x_2^{(2)}+x_1\cdot x_2^{(1)}\otimes x_2^{(2)}\bullet y\\
&=d(x_1\cdot x_2)\otimes y+(x_1^{(1)}\cdot x_2)\bullet y\otimes x_1^{(2)}+(x_1\cdot x_2^{(1)})\bullet y\otimes x_2^{(2)}\\
&+(x_1\cdot x_2)^{(1)}\otimes (x_1\cdot x_2)^{(2)}\bullet y\\
&=d(x_1\cdot x_2)\otimes y+(x_1\cdot x_2)^{(1)}\bullet y\otimes (x_1\cdot x_2)^{(2)}+(x_1\cdot x_2)^{(1)}\otimes (x_1\cdot x_2)^{(2)}\bullet y.
\end{align*}
So $x_1\cdot x_2\in A$. 

Let $d\in \D$. Note that $\delta(\tdun{$d$})=0$. Moreover, for any $y\in \CP_+(\D)$,
\begin{align*}
\delta(\tdun{$d$}\bullet y)&=\delta(B_d(y))=\tdun{$d$}\otimes y,
\end{align*}
so $\tdun{$d$}\in A$. Let $T_1,\ldots,T_k\in \PT(\D)$, nonempty. We consider $x=B_d(T_1\ldots T_k)$. For any $y\in \CP_+(\D)$,
\begin{align*}
\delta(x\bullet y)&=\delta(B_d(T_1\ldots T_ky))+\sum_{j=1}^k \delta(B_d(T_1\ldots (T_j\bullet y)\ldots T_k)\\
&=B_d(T_1\ldots T_k)\otimes y+\sum_{i=1}^k B_d(T_1\ldots \widehat{T_i}\ldots T_ky)\otimes T_i\\
&+\sum_{i=1}^k \sum_{j\neq i} B_d(T_1\ldots \widehat{T_i}\ldots (T_j\bullet y)\ldots T_k)\otimes T_i
+\sum_{i=1}^k B_d(T_1\ldots \widehat{T_i}\ldots T_k)\otimes T_i\bullet y\\
&=d(x)\otimes y+\sum_{i=1}^k B_d(T_1\ldots \widehat{T_i}\ldots T_k)\bullet y\otimes T_i
+\sum_{i=1}^k B_d(T_1\ldots \widehat{T_i}\ldots T_k)\otimes T_i\bullet y\\
&=d(x)\otimes y+x^{(1)}\bullet y\otimes x^{(2)}+x^{(1)}\otimes x^{(2)}\bullet y.
\end{align*}
Hence, $x\in A$. As $A$ is stable under $\cdot$ and contains any partitioned tree with one root, $A=\CP_+(\D)$.\\

For any nonempty partitioned tree $T\in \PT(\D)$, we put $\displaystyle \delta'(T)=\frac{1}{r(T)} \delta(T)$. Then 
\[(\delta' \otimes \id)\circ \delta'(T)=\frac{1}{r(T)^2} (\delta \otimes \id)\circ \delta(T),\]
so $\delta'$ is also permutative; moreover, for any $x,y\in \CP_+(\D)$,
\[\delta'(x\bullet y)=x\otimes y+x^{(1)}\bullet y\otimes x^{(2)}+x^{(1)}\otimes x^{(2)}\bullet y.\]
By Livernet's rigidity theorem \cite{Livernet2006}, the preLie algebra $\CP_+(\D)$ is freely generated by $\ker(\delta')$.
For any integer $n$, we denote by $\CP_n(\D)$ the subspace of $\CP(\D)$ generated by trees $T$ such that $r(T)=n$.
Then, for all $n$,  $\delta(\CP_n(\D))\subseteq \CP_n(\D)\otimes \CP_+(\D)$, and $\delta_{\mid \CP_n(\D)}=n\delta'_{\mid \CP_n(\D)}$.
This implies that $\ker(\delta)=\ker(\delta')$. \end{proof}

\begin{lemma}
In $\CP_+(\D)$ or $\UCP_+(\D)$, $\ker(\delta)\bullet \emptyset \subseteq \ker(\delta)$.
\end{lemma}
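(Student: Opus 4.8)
The plan is to recognize the right action of $\emptyset$ as a derivation that modifies only the decorations of the vertices, leaving the underlying partitioned forest untouched, and then to show that any such map is automatically a coderivation for $\delta$. Once that is established, the inclusion is immediate: if $\delta(x)=0$ then $\delta(x\bullet\emptyset)$ is obtained by applying $(f\otimes Id+Id\otimes f)$ to $\delta(x)=0$, hence vanishes.

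First I would record the shape of $f:x\mapsto x\bullet\emptyset$. In $UCP(\D)$ it is the map $f(T)=\sum_{s\in V(T)}T[+1]_s$ from the definition of the product, and in $CP(\D)$ it is the analogous sum over vertices modifying a single decoration at a time; in both cases it is the derivation of Lemma \ref{lemmederivation}, and it preserves the number of roots. The crucial point is that each summand changes one decoration but neither the rooted forest nor its block structure, so it commutes with the operations $T'\mapsto T\setminus T'$ and $T\mapsto st(T)$ that enter the definition of $\delta$.

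The key step is the coderivation identity
\[\delta(x\bullet\emptyset)=(f\otimes Id+Id\otimes f)(\delta(x)).\]
To prove it I would expand $\delta(f(T))=\sum_{s\in V(T)}\delta(T[+1]_s)$ and, for each extracted subtree $T'\in st(T)$, sort the vertices $s$ according to whether $s$ lies in the trunk $T\setminus T'$ or in $T'$. Incrementing a trunk vertex leaves $T'$ intact and, after summing over all $s\in V(T)\setminus V(T')=V(T\setminus T')$, produces $f(T\setminus T')\otimes T'$; incrementing a vertex of $T'$ leaves the trunk intact and produces $(T\setminus T')\otimes f(T')$ after summation. Since these two cases are exhaustive and disjoint, summing over $st(T)$ reassembles exactly $(f\otimes Id)\delta(T)+(Id\otimes f)\delta(T)$. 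Extending by linearity and, for $UCP_+(\D)=CP_+(\N\times\D)$, reading the same argument with decorations in $\N\times\D$, gives the identity for all $x$, and the Lemma follows.

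The main obstacle is the bookkeeping in this identity: one must check that the vertex-sorting really is exhaustive and disjoint, and in particular that restricting the sum $\sum_s$ to the trunk reproduces $f(T\setminus T')$ (using $V(T\setminus T')=V(T)\setminus V(T')$) rather than a restriction of $f(T)$. As a cross-check and an alternative route, I would note that one can instead rerun the inductive ``$A$-set'' argument of the preceding proposition with $y=\emptyset$: there the grafting contribution $d(x)\otimes y$ simply disappears, because grafting the empty tree creates no new extractable subtree, leaving precisely the coderivation formula above.
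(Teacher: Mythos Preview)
Your proof is correct and follows the same strategy as the paper: both establish the coderivation identity
\[\delta(x\bullet\emptyset)=(x\bullet\emptyset)^{(1)}\otimes x^{(2)}+x^{(1)}\otimes (x^{(2)}\bullet\emptyset)\]
and deduce the lemma from it. The only difference is tactical: you prove the identity by the direct vertex-sorting argument (splitting $\sum_{s\in V(T)}$ according to whether $s$ lies in the extracted $T'$ or in the trunk), whereas the paper uses precisely the inductive ``$A$-set'' argument you mention as a cross-check, verifying the identity first for products $x_1\cdot x_2$ and then for trees $B_d(T_1\ldots T_k)$ with one root; for $CP_+(\D)$ the paper simply invokes $T\bullet\emptyset=|T|T$, which your formula also covers since $|T\setminus T'|+|T'|=|T|$.
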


\begin{proof} Let us work in $\UCP_+(\D)$. Let us prove that for any $x\in \UCP_+(\D)$,
\[\delta(x\bullet \emptyset)=x^{(1)}\bullet \emptyset \otimes x^{(2)}+x^{(1)}\otimes x^{(2)}\bullet \emptyset.\]
We denote by $A$ the subspace of elements $x \in \UCP_+(\D)$ such that this holds. If $x_1,x_2\in A$, then 
\begin{align*}
\delta((x_1\cdot x_2)\bullet \emptyset)&=\delta((x_1\bullet \emptyset)\cdot x_2)+\delta(x_1\cdot (x_2\bullet \emptyset))\\
&=(x_1^{(1)}\bullet \emptyset) \cdot x_2\otimes x^{(1)}+x_1^{(1)}\cdot x_2 \otimes x_1^{(2)}\bullet \emptyset
+(x_1\bullet \emptyset)\cdot x_2^{(1)}\otimes x_2^{(2)}\\
&+x_1 \cdot (x_2^{(1)}\bullet \emptyset)\otimes x_2^{(2)}+x_1\cdot x_2^{(1)}\otimes x_2^{(2)}\bullet \emptyset
+x_1^{(1)}\cdot (x_2\bullet \emptyset)\otimes x_1^{(2)}\\
&=(x_1^{(1)}\cdot x_2)\bullet \emptyset\otimes x_1^{(2)}+x_1^{(1)}\cdot x_2 \otimes x_1^{(2)}\bullet \emptyset\\
&+(x_1\cdot x_2^{(1)})\bullet \emptyset \otimes x_2^{(1)}+x_1\cdot x_2^{(1)}\otimes x_2^{(2)}\bullet \emptyset\\
&=(x_1\cdot x_2)^{(1)}\bullet \emptyset\otimes (x_1\cdot x_2)^{(2)}+(x_1\cdot x_2)^{(1)}\otimes (x_1\cdot x_2)^{(2)}\bullet \emptyset,
\end{align*}
so $x_1\cdot x_2\in A$. If $d\in \D$ and $T_1,\ldots,T_k\in \UPT(\D)$, nonempty, if $x=B_d(T_1\ldots T_k)$,
\begin{align*}
\delta(x\bullet \emptyset)&=\delta (B_{d+1}(T_1\ldots T_k))+\sum_{i=1}^k \delta(B_d(T_1\ldots (T_i\bullet \emptyset)\ldots T_k)\\
&=\sum_{i=1}^k B_{d+1}(T_1\ldots \widehat{T_i}\ldots T_k)\otimes T_i
+\sum_{j=1}^k \sum_{i\neq j} B_d(T_1\ldots (T_j\bullet \emptyset)\ldots\widehat{T_i}\ldots T_k)\otimes T_i\\
&+\sum_{i=1}^k B_d(T_1\ldots \widehat{T_i}\ldots T_k)\otimes T_i\bullet \emptyset\\
&=\sum_{i=1}^k B_d(T_1\ldots \widehat{T_i}\ldots T_k)\bullet \emptyset \otimes T_i
+\sum_{i=1}^k B_d(T_1\ldots \widehat{T_i}\ldots T_k)\otimes T_i\bullet \emptyset\\
&=x^{(1)}\bullet \emptyset\otimes x^{(2)}+x^{(1)}\otimes x^{(2)}\bullet \emptyset,
\end{align*}
so $x\in A$. Hence, $A=\UCP_+(\D)$. Consequently, if $x\in \ker(\delta)$, then $x\bullet \emptyset \in \ker(\delta)$.
The proof is immediate for $\CP_+(\D)$, as for any tree $T\in\PT(\D)$, $T\bullet \emptyset=|T|T$. \end{proof}

\begin{notation}
We denote by $\phi$ the endomorphism of $\ker(\delta)$ defined by $\phi(x)=x\bullet \emptyset$.
\end{notation}

\begin{cor}
The preLie algebra $\UCP(\D)$, respectively $\CP(\D)$, is generated by $\ker(\delta)\oplus (\emptyset)$, with the relations 
\begin{align*}
&&\emptyset\bullet \emptyset&=0,\\
\forall& x\in \ker(\delta),&\emptyset \bullet x&=0,&x\bullet \emptyset&=\phi(x).
\end{align*}
\end{cor}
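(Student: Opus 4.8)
The plan is to assemble the statement from the two preceding results together with the pre-Lie analogue of Lemma~\ref{lemmederivation}. Write $W=Ker(\delta)$. By the preceding Proposition (applied to $CP_+(\D)$, and to $CP_+(\N\times\D)=UCP_+(\D)$ in the unitary case), the augmentation ideal $UCP_+(\D)$, respectively $CP_+(\D)$, is the \emph{free} pre-Lie algebra on $W$. By the preceding Lemma the map $\phi\colon x\mapsto x\bullet\emptyset$ sends $W$ into $W$, and the right multiplication $R_\emptyset\colon y\mapsto y\bullet\emptyset$ is a derivation of $(UCP_+(\D),\bullet)$ extending $\phi$ — this is the derivation $f$ used in the construction of $UCP(\D)$, respectively the map $T\mapsto|T|\,T$ in the case of $CP(\D)$. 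Finally, Lemma~\ref{lem3} gives $\emptyset\bullet y=0$ for every $y$, so in particular $\emptyset\bullet\emptyset=0$ and $\emptyset\bullet x=0$ for $x\in W$, while $x\bullet\emptyset=\phi(x)$ for $x\in W$ holds by definition of $\phi$. Thus the three displayed relations hold in $UCP(\D)$, and since $W$ already generates $UCP_+(\D)$ and $UCP(\D)=UCP_+(\D)\oplus(\emptyset)$, the subspace $Ker(\delta)\oplus(\emptyset)$ generates $UCP(\D)$ as a pre-Lie algebra.

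First I would check that these data are \emph{consistent}, i.e. that adjoining to the free pre-Lie algebra on $W$ an element $\emptyset$ acting on the left by $0$ and on the right by the derivation $R_\emptyset$ satisfies the pre-Lie identity; this is the verbatim pre-Lie counterpart of Lemma~\ref{lemmederivation}. With the associator $\mathrm{as}(a,b,c)=(a\bullet b)\bullet c-a\bullet(b\bullet c)$, one has $\mathrm{as}(\emptyset,b,c)=0$ because $\emptyset\bullet(-)=0$, and for $a,c\in UCP_+(\D)$,
\[\mathrm{as}(a,\emptyset,c)=R_\emptyset(a)\bullet c=\mathrm{as}(a,c,\emptyset),\]
the right-hand equality being exactly the derivation property of $R_\emptyset$; the cases where two arguments equal $\emptyset$ are immediate. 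Hence the relations are compatible with the pre-Lie axioms.

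It then remains to show the relations are \emph{sufficient}, i.e. that the whole pre-Lie product of $UCP(\D)$ is recovered from the freeness on $W$ and the action of $\emptyset$. The mechanism is that left annihilation by $\emptyset$ and the pre-Lie identity force the right action of $\emptyset$ to be a derivation: for $q_1,q_2\in UCP_+(\D)$, using $\emptyset\bullet q_2=0$,
\[(q_1\bullet q_2)\bullet\emptyset=(q_1\bullet\emptyset)\bullet q_2+q_1\bullet(q_2\bullet\emptyset).\]
Hence any pre-Lie monomial in the generators can be rewritten inside $\K\emptyset\oplus UCP_+(\D)$: an $\emptyset$ occurring as a left factor is killed by $\emptyset\bullet(-)=0$, while an $\emptyset$ occurring as a right factor is pushed inward by the derivation rule until it reaches a generator $x\in W$, where it is replaced by $\phi(x)\in W$. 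The resulting normal form $\lambda\emptyset+w$ with $w\in UCP_+(\D)$ is unique precisely by the freeness of $UCP_+(\D)$ on $W$, which identifies the presented algebra with $UCP(\D)$; the same argument with $\phi(x)=|x|\,x$ on tree generators treats $CP(\D)$. The main obstacle is exactly this last reduction: one must verify that iterating the derivation rule and $\emptyset\bullet(-)=0$ always terminates within $\K\emptyset\oplus UCP_+(\D)$, and it is here — and not merely in the relation $\emptyset\bullet x=0$ on generators — that the full left-annihilation $\emptyset\bullet(-)=0$ and the derivation identity are indispensable.
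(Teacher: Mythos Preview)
The paper gives no proof of this Corollary; it is stated as an immediate consequence of the preceding Proposition (the augmentation ideal is free pre-Lie on $W=Ker(\delta)$) and Lemma ($W\bullet\emptyset\subseteq W$). Your first paragraph already contains everything the paper intends: $W$ generates the augmentation ideal, adding $\emptyset$ gives the whole algebra, and the three displayed relations hold by Lemma~\ref{lem3} and the definition of $\phi$.

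Your attempt to go further and interpret the Corollary as a \emph{presentation} runs into a genuine difficulty, which you yourself flag at the end but do not resolve. The three displayed relations impose $\emptyset\bullet x=0$ only for $x\in W$, and this does \emph{not} propagate to $\emptyset\bullet q=0$ for arbitrary $q$ via the pre-Lie identity alone: from $\emptyset\bullet x=\emptyset\bullet y=0$ one only gets $\emptyset\bullet(x\bullet y)=\emptyset\bullet(y\bullet x)$, not $=0$. Concretely, take the four-dimensional pre-Lie algebra with basis $e,x,y,z$ and nonzero products $x\bullet x=y$, $e\bullet y=z$; one checks directly that all associators are symmetric, the relations $e\bullet e=e\bullet x=x\bullet e=0$ hold (so $\phi=0$), yet $e\bullet(x\bullet x)=z\neq0$. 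Hence the quotient of the free pre-Lie algebra on $W\sqcup\{\emptyset\}$ by the three displayed relations is in general strictly larger than $\K\emptyset\oplus F(W)$.

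Your normal-form argument therefore uses more than the stated relations: both the derivation rule $(q_1\bullet q_2)\bullet\emptyset=(q_1\bullet\emptyset)\bullet q_2+q_1\bullet(q_2\bullet\emptyset)$ and the step ``$\emptyset$ as left factor is killed'' require the \emph{full} left annihilation $\emptyset\bullet(-)=0$. If one reads the Corollary as a presentation, the clean fix is to add this relation (equivalently, to say that right multiplication by $\emptyset$ is the unique pre-Lie derivation of $F(W)$ extending $\phi$); your reduction then goes through exactly as written. If one reads the Corollary merely as ``these elements generate and these relations hold'', your first paragraph is a complete proof and the rest is unnecessary.
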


\begin{remark}
We give $\CP(\D)$ a graduation by putting the elements of $\D$ homogeneous of degree $1$, and we put $|D|=d$. for any $n\geq 1$, 
we denote by $t_n(d)$ the number of partitioned trees decorated by $\D$ with $n$ vertices
and by $f_n(d)$ the number of partitioned forests decorated by $\D$ with $n$ vertices. We consider the formal series
\begin{align*}
F(d,X)&=\sum_{n=0}^\infty f_n(d)X^n,&T(d,X)&=\sum_{n=0}^\infty t_n(d)X^n.
\end{align*}
As any partitioned forest is a monomial of partitioned trees, we obtain
\[F(d,X)=\prod_{n=1}^\infty \frac{1}{(1-X^n)^{t_n(d)}}.\]
As any partitioned tree can be seen as a monomial of pairs $(e,F)$, where $e\in \D$ et $F$ a partitioned forest, we obtain that
\[T(d,X)=\prod_{n=1}^\infty \frac{1}{(1-X^n)^{df_{n-1}(d)}}.\]
These two formulas allow to compute $t_n(d)$ by induction on $n$, see Table \ref{table1} (see also \cite{Foissy28}). For $d=1$, this gives Entry \href{https://oeis.org/A035052}{A035052} of the OEIS \cite{Sloane};
for $d=2$, Entry \href{https://oeis.org/A226269}{A226269}. Moreover,  the sequence of the coefficients of $\binom{d}{n}$ in $t_n(d)$ is Entry \href{https://oeis.org/A052888}{A052888}.

\begin{table}\label{table1}
\begin{align*}
t_1(d)&=d\\
&=\binom{d}{1},\\
t_2(d)&=\frac{(3d + 1)d}{2}\\
&=2\binom{d}{1} + 3\binom{d}{2},\\
t_3(d)&=\frac{(19d^2 + 9d + 2)d}{6}\\
&=5\binom{d}{1} + 22\binom{d}{2} + 19\binom{d}{3},\\
t_4(d)&=\frac{(63d^3 + 34d^2 + 13d + 2)d}{8}\\
&=14\binom{d}{1} + 139\binom{d}{2} + 309\binom{d}{3} + 189\binom{d}{4},\\
t_5(d)&=\frac{(644d^4 + 400d^3 + 175d^2 + 35d + 6)d}{30}\\
&=42\binom{d}{1} + 868\binom{d}{2} + 3735\binom{d}{3} + 5472\binom{d}{4} + 2576\binom{d}{5},\\
t_6(d)&=\frac{(44683d^5 + 31695d^4 + 14635d^3 + 4185d^2 + 1162d + 120)d}{720}\\
&=134\binom{d}{1} + 5491\binom{d}{2} + 40882\binom{d}{3} + 107866\binom{d}{4} + 116990\binom{d}{5} + 44683\binom{d}{6},\\
t_7(d)&=\frac{(941977d^6 + 754131d^5 + 375235d^4 + 125265d^3 + 35308d^2 + 5124d + 720)d}{5040}\\
&=42\binom{d}{1} + 4258\binom{d}{2} + 55452\binom{d}{3} + 243536\binom{d}{4} + 468055\binom{d}{5} + 408774\binom{d}{6} + 133036\binom{d}{7}.
\end{align*}
\caption{first values of $t_n(d)$}
\end{table}
We denote by $k_n(d)$ the dimension of $\ker(\tdelta)_n$ in $\CP(\D)$. 
As the preLie algebra $\CP(\D)$ is freely generated by $\ker(\tdelta)$, we obtain that
\[T(d)=\left(\sum_{n=1}^\infty k_n(d)X^n \right)\prod_{n=1}^\infty \frac{1}{(1-X^n)^{t_n(d)}},\] 
This allows to compute the first values of $k_n(d)$, see Table \ref{table2}.

\begin{table}
\label{table2}
\begin{align*}
k_1(d)&=d\\
&=\binom{d}{1},\\
k_2(d)&=\frac{(d + 1)d}{2}\\
&=\binom{d}{1} + \binom{d}{2},\\
k_3(d)&=\frac{(2d^2 + 1)d}{3}\\
&=\binom{d}{1} + 4\binom{d}{2} + 4\binom{d}{3},\\
k_4(d)&=\frac{(11d^3 + 2d^2 + d + 2)d}{8}\\
&=2\binom{d}{1} + 21\binom{d}{2} + 51\binom{d}{3} + 33\binom{d}{4},\\
k_5(d)&=\frac{(203d^4 + 60d^3 - 5d^2 - 30d + 12)d}{60}\\
&=4\binom{d}{1} + 114\binom{d}{2} + 543\binom{d}{3} + 836\binom{d}{4} + 406\binom{d}{5},\\
k_6(d)&=\frac{(220d^5+89d^4+16d^3+3d^2+4d+4)d}{24}\\
&=14\binom{d}{1} + 690\binom{d}{2} + 5531\binom{d}{3} + 15206\binom{d}{4} + 16945\binom{d}{5} + 6600\binom{d}{6},\\
k_7(d)&=\frac{(66518d^6 + 33831d^5 + 9170d^4 - 735d^3 - 1708d^2 - 1596d + 360)d}{2520}\\
&=42\binom{d}{1} + 4258\binom{d}{2} + 55452\binom{d}{3} + 243536\binom{d}{4} + 468055\binom{d}{5} + 408774\binom{d}{6} + 133036\binom{d}{7}.
\end{align*}
\caption{first values of $k_n(d)$}
\end{table}
\end{remark}

\section{Bialgebra structures on free Com-PreLie algebras}

\subsection{Tensor product of Com-PreLie algebras}

\begin{lemma}\label{lem23}
Let $A_1,A_2$ be two Com-PreLie algebras and let $\varepsilon:A_1\longrightarrow \K$ such that 
\begin{align*}
&\forall a,b\in A_1,&\varepsilon(a\bullet b)&=\varepsilon(b\bullet a).
\end{align*}
Then $A_1 \otimes A_2$ is a Com-PreLie algebra, with the products defined by 
\begin{align*}
(a_1\otimes a_2)(b_1\otimes b_2)&=a_1b_1\otimes a_2b_2,\\
(a_1\otimes a_2)\bullet_\varepsilon(b_1\otimes b_2)&=a_1\bullet b_1\otimes a_2b_2+\varepsilon(b_1) a_1 \otimes a_2\bullet b_2.
\end{align*}\end{lemma}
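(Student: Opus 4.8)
The plan is to verify directly each of the four axioms in Definition \ref{defi1} for the pair of products on $A_1\otimes A_2$, treating the commutative associative product first (which is routine) and then concentrating on the preLie and Leibniz identities, where the hypothesis on $\varepsilon$ enters. First I would record that $(A_1\otimes A_2,\cdot)$ is commutative and associative: this is immediate since the product is the componentwise product and each $(A_i,\cdot)$ is itself commutative and associative, so nothing about $\varepsilon$ is needed here.

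The bulk of the work is the preLie identity for $\bullet_\varepsilon$. I would expand both associators $(X\bullet_\varepsilon Y)\bullet_\varepsilon Z - X\bullet_\varepsilon(Y\bullet_\varepsilon Z)$ for $X=a_1\otimes a_2$, $Y=b_1\otimes b_2$, $Z=c_1\otimes c_2$, using the defining formula twice. Each term splits into four pieces according to whether the preLie product acts on the first or second tensor factor, producing coefficients that are products of $\varepsilon$-values of the relevant first-factor entries. Collecting these, the ``diagonal'' pieces (preLie in the same factor on both applications) reproduce the preLie associators of $A_1$ and of $A_2$ separately, each weighted by the appropriate $\varepsilon$-scalar; since $(A_i,\bullet)$ is preLie, these are symmetric under the exchange $Y\leftrightarrow Z$. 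The genuinely new terms are the ``mixed'' ones, where one application is preLie in the first factor and the other in the second factor. Here the Leibniz identity in $A_1$ and $A_2$ converts some $b_1c_1$ and $\varepsilon$-products into the desired shape, and the hypothesis $\varepsilon(a\bullet b)=\varepsilon(b\bullet a)$ is exactly what is needed to make the remaining asymmetric scalar coefficients (those of the form $\varepsilon(b_1\bullet c_1)$ versus $\varepsilon(c_1\bullet b_1)$) match after swapping $Y$ and $Z$. I expect this bookkeeping of mixed terms to be the main obstacle, and I would organize it by grouping terms according to their tensor-factor type before invoking each identity.

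For the Leibniz identity $(X\cdot Y)\bullet_\varepsilon Z=(X\bullet_\varepsilon Z)\cdot Y+X\cdot(Y\bullet_\varepsilon Z)$, I would again expand both sides using the componentwise product together with the formula for $\bullet_\varepsilon$. On the left the two summands contribute $a_1b_1\bullet c_1\otimes a_2b_2c_2$ and $\varepsilon(c_1)\,a_1b_1\otimes(a_2b_2)\bullet c_2$; applying the Leibniz identity of $A_1$ to $a_1b_1\bullet c_1$ and of $A_2$ to $(a_2b_2)\bullet c_2$ splits each into two pieces, and these four pieces regroup precisely into the right-hand side after using commutativity of $\cdot$. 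This verification uses only the Leibniz identities of $A_1$ and $A_2$ and the multiplicativity of $\varepsilon$ is not required; the scalar $\varepsilon(c_1)$ simply rides along. Having checked all four axioms, I would conclude that $(A_1\otimes A_2,\cdot,\bullet_\varepsilon)$ is a Com-PreLie algebra, completing the proof.
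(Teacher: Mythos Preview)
Your proposal is correct and follows essentially the same route as the paper: a direct expansion of the associator $(X\bullet_\varepsilon Y)\bullet_\varepsilon Z - X\bullet_\varepsilon(Y\bullet_\varepsilon Z)$ into pieces that are symmetric in $Y,Z$ by the preLie identity in $A_1$, by the preLie identity in $A_2$, or manifestly, with one residual term $-\varepsilon(b_1\bullet c_1)\,a_1\otimes a_2\bullet(b_2c_2)$ whose symmetry requires exactly the hypothesis on $\varepsilon$ together with commutativity of $\cdot$ in $A_2$. One small over-statement: in the preLie check only the Leibniz identity of $A_2$ is actually invoked (to rewrite $(a_2b_2)\bullet c_2 - a_2(b_2\bullet c_2)$ as $(a_2\bullet c_2)b_2$); the Leibniz identity of $A_1$ plays no role there, and the Leibniz identities of both $A_1$ and $A_2$ are used only in the verification of the Leibniz axiom for $\bullet_\varepsilon$, exactly as you describe.
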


\begin{proof} $A_1\otimes A_2$ is obviously an associative and commutative algebra, with unit $1\otimes 1$. 
We take $\alpha=a_1\otimes a_2,\beta=b_1\otimes b_2,\gamma=c_1\otimes c_2 \in A_1\otimes A_2$. Let us prove the PreLie identity.
\begin{align*}
(\alpha\bullet_\varepsilon \beta) \bullet_\varepsilon \gamma-\alpha\bullet_\varepsilon (\beta \bullet_\varepsilon \gamma)&=
(a_1\bullet b_1)\bullet c_1 \otimes a_2b_2c_2+\varepsilon(c_1) a_1\bullet b_1 \otimes (a_2b_2)\bullet c_2\\
&+\varepsilon(b_1) a_1\bullet c_1\otimes (a_2\bullet b_2)c_2+\varepsilon(b_1)\varepsilon(c_1)a_1\otimes (a_2b\bullet _2)\bullet c_2\\
&-a_1\bullet (b_1\bullet c_1)\otimes a_2b_2c_2-\varepsilon(c_1)a_1\bullet b_1\otimes a_2(b_2\bullet c_2)\\
&-\varepsilon(c_1)\varepsilon(b_1) a_1\otimes a_2\bullet (b_2\bullet c_2)-\varepsilon(b_1\bullet c_1) a_1\otimes a_2\bullet (b_2c_2)\\
&=((a_1\bullet b_1)\bullet c_1-a_1\bullet (b_1\bullet c_1)) \otimes a_2b_2c_2\\
&+\varepsilon(b_1)\varepsilon(c_1) a_1\otimes ((a_2\bullet b_2)\bullet c_2-a_2\bullet (b_2\bullet c_2))\\
&+\varepsilon(c_1)a_1\bullet b_1\otimes (a_2\bullet c_2) b_2+\varepsilon(b_1)a_1\bullet c_1\otimes (a_2\bullet b_2) c_2\\
&-\varepsilon(b_1\bullet c_1) a_1\otimes a_2\bullet (b_2c_2).
\end{align*}
As $A_1$ and $A_2$ are PreLie, the first and second lines of the last equality are symmetric in $\beta$ and $\gamma$; the third line is obviously symmetric in $\beta$ and $\gamma$;
as $m$ is commutative and by the hypothesis on $\varepsilon$, the last line also is. So $\bullet_\varepsilon$ is PreLie.
\begin{align*}
(\alpha\beta)\bullet_\varepsilon \gamma&=(a_1b_1)\bullet c_1\otimes a_2b_2c_2+\varepsilon(c_1) a_1b_1\otimes (a_2b_2)\bullet c_2\\
&=((a_1\bullet c_1)b_1+a_1(b_1\bullet c_1))\otimes a_2b_2c_2
+\varepsilon(c_1) a_1b_1\otimes ((a_2\bullet c_2)b_2+a_2(b_2\bullet c_2))\\
&=(a_1\bullet c_1\otimes a_2c_2+\varepsilon(c_1)a_1\otimes a_2\bullet c_2)(b_1\otimes b_2)\\
&+(a_1\otimes a_2)(b_1\bullet c_1\otimes b_2c_2+\varepsilon(c_1)b_1\otimes b_2\bullet c_2)\\
&=(\alpha\bullet_\varepsilon \gamma)\beta+\alpha(\beta\bullet_\varepsilon \gamma).
\end{align*}
So $A_1\otimes A_2$ is Com-PreLie. \end{proof}

\begin{remark} Consequently, if $(A,m,\bullet,\Delta)$ is a Com-PreLie bialgebra, with counit $\varepsilon$, then $\Delta$ is a morphism of Com-PreLie algebras
from $(A,m,\bullet)$ to $(A\otimes A,m, \bullet_\varepsilon)$. Indeed, for all $a,b\in A$, $\varepsilon(a\bullet b)=\varepsilon(b\bullet a)=0$ and 
\begin{align*}
\Delta(a)\bullet_\varepsilon \Delta(b)&=a^{(1)}\bullet b^{(1)}\otimes a^{(2)}b^{(2)}
+\varepsilon(b^{(1)}) a^{(1)} \otimes a^{(2)}\bullet b^{(2)}\\
&=a^{(1)}\bullet b^{(1)}\otimes a^{(2)}b^{(2)}+ a^{(1)} \otimes a^{(2)}\bullet b\\
&=\Delta(a\bullet b).
\end{align*}\end{remark}

\begin{lemma}
\label{lem24}\begin{enumerate}
\item Let $A,B,C$ be three Com-PreLie algebras, $\varepsilon_A:A\longrightarrow \K$ and $\varepsilon_B:B\longrightarrow \K$
with the condition of lemma \ref{lem23}. Then $\varepsilon_A \otimes \varepsilon_B:A\otimes B\longrightarrow \K$ also satisfies the condition of
lemma \ref{lem23}. Moreover, the Com-PreLie algebras $(A\otimes B)\otimes C$ and $A\otimes (B\otimes C)$ are equal.
\item Let $A,B$ be two Com-PreLie algebras, and $\varepsilon:A\longrightarrow \K$ such that 
\begin{align*}
&\forall a,b\in A,&\varepsilon(ab)&=\varepsilon(a)\varepsilon(b),&\varepsilon(a\bullet b)&=0.
\end{align*}
Then $\varepsilon\otimes \id:A\otimes B\longrightarrow B$ is morphism of Com-PreLie algebras.
\item Let $A,A',B,B'$ be Com-PreLie algebras, $\varepsilon:A\longrightarrow \K$ and $\varepsilon':A'\longrightarrow \K$ satisfying the condition of 
lemma \ref{lem23}. Let $f:A\longrightarrow A'$, $g:B\longrightarrow B'$ be Com-PreLie algebra morphisms such that $\varepsilon'\circ f=\varepsilon$.
Then $f\otimes g:A\otimes B\longrightarrow A'\otimes B'$ is a Com-PreLie algebra morphism.
\end{enumerate}\end{lemma}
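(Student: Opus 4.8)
The statement consists of three largely independent verifications, each asserting that a natural map or construction respects the Com-PreLie structure built in Lemma \ref{lem23}. The plan is to treat them in order, reducing each to a direct but short check on simple tensors, since all the products involved are bilinear.

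For part (1), I would first verify that $\varepsilon_A\otimes\varepsilon_B$ satisfies the symmetry condition of Lemma \ref{lem23}. Writing out $(\varepsilon_A\otimes\varepsilon_B)\bigl((a\otimes b)\bullet_{\varepsilon_B}(a'\otimes b')\bigr)$ using the formula for $\bullet_{\varepsilon_B}$, the term $a\bullet a'\otimes bb'$ contributes $\varepsilon_A(a\bullet a')\varepsilon_B(bb')$ and the term $\varepsilon_B(b)\,a\otimes b\bullet b'$ contributes $\varepsilon_B(b)\varepsilon_A(a)\varepsilon_B(b\bullet b')$; the first is symmetric in the two arguments by the hypothesis on $\varepsilon_A$ and the second vanishes since $\varepsilon_B(b\bullet b')=0$ follows from the Lemma \ref{lem23} condition applied with $b=b'$ interchanged (or one simply notes the whole expression is manifestly symmetric once both summands are expanded). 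Then for the associativity of the tensor construction, I would compute the preLie product on both $(A\otimes B)\otimes C$ and $A\otimes(B\otimes C)$ applied to a triple of simple tensors $(a\otimes b)\otimes c$ versus $a\otimes(b\otimes c)$: each expands into three terms governed by $\varepsilon_B\otimes\varepsilon_C$ on one side and $\varepsilon_B$, then $\varepsilon_C$ nested on the other, and a bookkeeping check shows the three resulting summands coincide term by term. This is the step most prone to error, as the counit placement must be tracked carefully, but it is purely mechanical.

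For part (2), I would check that $\varepsilon\otimes Id$ is an algebra morphism (immediate from multiplicativity of $\varepsilon$) and then that it intertwines the preLie products. Applying $\varepsilon\otimes Id$ to $(a\otimes b)\bullet_\varepsilon(a'\otimes b')=a\bullet a'\otimes bb'+\varepsilon(a')\,a\otimes b\bullet b'$ gives $\varepsilon(a\bullet a')\,bb'+\varepsilon(a')\varepsilon(a)\,b\bullet b'$; the first term vanishes by the hypothesis $\varepsilon(a\bullet a')=0$, leaving $\varepsilon(a)\varepsilon(a')\,b\bullet b'$, which is exactly $(\varepsilon\otimes Id)(a\otimes b)\bullet(\varepsilon\otimes Id)(a'\otimes b')=\varepsilon(a)b\bullet\varepsilon(a')b'$ in $B$.

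For part (3), I would verify that $f\otimes g$ respects both products on simple tensors. Compatibility with the commutative product is immediate since $f$ and $g$ are algebra morphisms. For the preLie product, applying $f\otimes g$ to $(a\otimes b)\bullet_\varepsilon(a'\otimes b')$ yields $f(a\bullet a')\otimes g(bb')+\varepsilon(a')f(a)\otimes g(b\bullet b')$; using that $f$ and $g$ are preLie and algebra morphisms and that $\varepsilon=\varepsilon'\circ f$, this equals $f(a)\bullet f(a')\otimes g(b)g(b')+\varepsilon'(f(a'))f(a)\otimes g(b)\bullet g(b')$, which is precisely $(f(a)\otimes g(b))\bullet_{\varepsilon'}(f(a')\otimes g(b'))$. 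The only subtlety here is that the hypothesis $\varepsilon'\circ f=\varepsilon$ is exactly what is needed to match the counit coefficients, so I would flag that as the one place where each assumption is used essentially rather than routinely.
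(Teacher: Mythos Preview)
Your approach is essentially the same as the paper's: direct verification on simple tensors for each of the three parts. Two small corrections, both in part (1).

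First, in Lemma~\ref{lem23} the linear form $\varepsilon$ lives on the \emph{first} tensor factor, so the product on $A\otimes B$ is $\bullet_{\varepsilon_A}$, not $\bullet_{\varepsilon_B}$; concretely $(a\otimes b)\bullet_{\varepsilon_A}(a'\otimes b')=a\bullet a'\otimes bb'+\varepsilon_A(a')\,a\otimes b\bullet b'$, and the second summand therefore contributes $\varepsilon_A(a')\varepsilon_A(a)\varepsilon_B(b\bullet b')$ after applying $\varepsilon_A\otimes\varepsilon_B$.

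Second, your claim that ``$\varepsilon_B(b\bullet b')=0$ follows from the Lemma~\ref{lem23} condition'' is wrong: that condition is the \emph{symmetry} $\varepsilon_B(b\bullet b')=\varepsilon_B(b'\bullet b)$, not vanishing. Your fallback is the right move: the paper simply observes that each of the two summands is separately symmetric under swapping the two arguments, using the hypotheses on $\varepsilon_A$ and $\varepsilon_B$ together with commutativity of the associative product and of $\K$. With these fixes your plan matches the paper exactly; parts (2) and (3) are already fine as written.
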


\begin{proof} 1. Indeed, if $a_1,a_2\in A$ and $b_1,b_2\in B$,
\begin{align*}
\varepsilon_A\otimes \varepsilon_B((a_1\otimes b_1)\bullet (a_2\otimes b_2))
&=\varepsilon_A(a_1\bullet a_2)\varepsilon_B(b_1b_2)+\varepsilon_A(a_1)\varepsilon_A(a_2)\varepsilon_B(b_1\bullet b_2)\\
&=\varepsilon_A(a_2\bullet a_1)\varepsilon_B(b_2b_1)+\varepsilon_A(a_2)\varepsilon_A(a_1)\varepsilon_B(b_2\bullet b_1)\\
&=\varepsilon_A\otimes \varepsilon_B((a_2\otimes b_2)\bullet (a_1\otimes b_1)).
\end{align*}
Let $a_1,a_2\in A$, $b_1,b_2\in B$, $c_1,c_2\in C$. In $(A\otimes B) \otimes C$,
\begin{align*}
&(a_1\otimes b_1\otimes c_1)\bullet(a_2\otimes b_2\otimes c_2)\\
&=((a_1\otimes b_1)\bullet (a_2\otimes b_2))\otimes c_1c_2
+\varepsilon_A\otimes \varepsilon_B(a_2\otimes b_2)a_1\otimes b_1\otimes c_1\bullet c_2\\
&=a_1\bullet a_2\otimes b_1b_2\otimes c_1c_2+\varepsilon_A(a_2) a_1\otimes b_1\bullet b_2\otimes c_1c_2
+\varepsilon_A(a_2)\varepsilon_B(b_2) a_1\otimes b_1\otimes c_1\bullet c_2.
\end{align*}
In $A\otimes (B \otimes C)$,
\begin{align*}
&(a_1\otimes b_1\otimes c_1)\bullet(a_2\otimes b_2\otimes c_2)\\
&=a_1\bullet a_2\otimes b_1b_2\otimes c_1c_2+\varepsilon_A(a_2)a_1\otimes ((b_1\otimes c_1)\bullet (b_2\otimes c_2))\\
&=a_1\bullet a_2\otimes b_1b_2\otimes c_1c_2+\varepsilon_A(a_2) a_1\otimes b_1\bullet b_2\otimes c_1c_2
+\varepsilon_A(a_2)\varepsilon_B(b_2) a_1\otimes b_1\otimes c_1\bullet c_2.
\end{align*}
So $(A\otimes B)\otimes C=A\otimes (B\otimes C)$.\\

2. Let $a_1,a_2\in A$, $b_1,b_2\in B$. 
\begin{align*}
&\varepsilon\otimes \id((a_1\otimes b_1)(a_2\otimes b_2))&&\varepsilon\otimes \id((a_1\otimes b_1)\bullet(a_2\otimes b_2))\\
&=\varepsilon(a_1a_2)b_1b_2&&=\varepsilon(a_1\bullet a_2)b_1b_2+\varepsilon(a_1)\varepsilon(a_2)b_1\bullet b_2\\
&=\varepsilon(a_1)\varepsilon(a_2)b_1b_2&&=\varepsilon(a_1)\varepsilon(a_2)b_1\bullet b_2\\
&=\varepsilon\otimes \id((a_1\otimes b_1)\varepsilon\otimes \id(a_2\otimes b_2),&
&=\varepsilon\otimes \id((a_1\otimes b_1)\bullet \varepsilon\otimes \id(a_2\otimes b_2).
\end{align*}
So $\varepsilon \otimes \id$ is a morphism. \\

3. $f\otimes g$ is obviously an algebra morphism. If $a_1,a_2\in A$, $b_1,b_2\in B$,
\begin{align*}
(f\otimes g)((a_1\otimes b_1)\bullet (a_2\otimes b_2))
&=(f\otimes g)(a_1\bullet a_2\otimes b_1b_2+\varepsilon(a_2)a_1\otimes b_1\bullet b_2)\\
&=f(a_1)\bullet f(a_2)\otimes g(b_1)g(b_2)+\varepsilon(f(a_2))f(a_1)\otimes g(b_1)\bullet g(b_2)\\
&=(f(a_1)\otimes g(b_1))\bullet (f(a_2)\otimes g(b_2)).
\end{align*}
So $f\otimes g$ is a Com-PreLie algebra morphism. \end{proof}

\begin{lemma}
Let $A$ be a unital associative commutative bialgebra, and $V$ a subspace of $A$ which generates $A$. 
Let $\bullet$ be a product on $A$ such that 
\begin{align*}
&\forall a,b,c \in A,&(ab)\bullet c&=(a\bullet c)b+a(b\bullet c).
\end{align*}
Then $A$ is a Com-PreLie bialgebra if, and only if, for all $x\in V$, and for all $b,c \in A$,
\begin{align*}
(x\bullet b)\bullet c-x\bullet (b\bullet c)&=(x\bullet c)\bullet b-x\bullet (c\bullet b),\\
\Delta(x\bullet b)&=x^{(1)}\otimes x^{(2)}\bullet b+x^{(1)}\bullet b^{(1)}\otimes x^{(2)}b^{(2)}.
\end{align*}\end{lemma}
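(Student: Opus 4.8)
The plan is to observe that the forward implication is immediate and to obtain the converse by an ``extension from generators'' argument, exactly in the spirit of the stability-under-$\cdot$ computations used earlier (for instance in the proof of Proposition~\ref{prop9} and in the lemmas preceding it). Since $(A,\cdot,\Delta)$ is already assumed to be an associative commutative bialgebra, and the Leibniz identity is assumed to hold for \emph{all} $a,b,c\in A$, the only two properties left to promote from $V$ to all of $A$ are the preLie identity and the coproduct compatibility of Definition~\ref{defi1}. For the forward direction there is nothing to do: a Com-PreLie bialgebra satisfies both identities for arbitrary elements, hence in particular for $x\in V$.

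For the preLie identity I would set
\[
P(a,b,c)=(a\bullet b)\bullet c-a\bullet(b\bullet c)-(a\bullet c)\bullet b+a\bullet(c\bullet b),
\]
which is trilinear, and let $B=\{a\in A\mid P(a,b,c)=0\ \forall b,c\in A\}$, so that $V\subseteq B$ by hypothesis. The key point is that $B$ is stable under $\cdot$. Applying the Leibniz identity twice to $((a_1a_2)\bullet b)\bullet c$ and once to $(a_1a_2)\bullet(b\bullet c)$, and using commutativity of $\cdot$, one finds
\[
P(a_1a_2,b,c)=P(a_1,b,c)\,a_2+a_1\,P(a_2,b,c),
\]
the two cross terms $(a_1\bullet b)(a_2\bullet c)$ and $(a_1\bullet c)(a_2\bullet b)$ being symmetric in $b,c$ and hence cancelling when the $b\leftrightarrow c$ swapped expression is subtracted. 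As $B$ is a subspace ($P$ being trilinear) that is closed under $\cdot$ and contains $V$, and $V$ generates $A$, we conclude $B=A$, i.e.\ $(A,\bullet)$ is preLie. The unit lies in $B$ because $1\bullet c=0$, by the same argument as in Lemma~\ref{lem3}.

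For the coproduct compatibility I would argue identically. Put
\[
Q(a,b)=\Delta(a\bullet b)-a^{(1)}\otimes a^{(2)}\bullet b-a^{(1)}\bullet b^{(1)}\otimes a^{(2)}b^{(2)},
\]
and let $B'=\{a\in A\mid Q(a,b)=0\ \forall b\in A\}$, so that $V\subseteq B'$ by hypothesis. To prove $B'$ stable under $\cdot$, I would expand $\Delta((a_1a_2)\bullet b)=\Delta(a_1\bullet b)\Delta(a_2)+\Delta(a_1)\Delta(a_2\bullet b)$, using the Leibniz identity together with the multiplicativity of $\Delta$, then substitute $Q(a_1,b)=Q(a_2,b)=0$. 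On the other side, starting from $\Delta(a_1a_2)=a_1^{(1)}a_2^{(1)}\otimes a_1^{(2)}a_2^{(2)}$ and applying the Leibniz identity once more to $(a_1^{(2)}a_2^{(2)})\bullet b$ and to $(a_1^{(1)}a_2^{(1)})\bullet b^{(1)}$, the desired right-hand side of the compatibility for $a_1a_2$ breaks into four terms; using commutativity of $\cdot$ these match the four terms produced above exactly, so $Q(a_1a_2,b)=0$. Thus $B'$ is a subalgebra containing $V$ and equals $A$.

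The routine parts are the two stability computations, which are purely formal once set up; the only genuinely delicate point is the Sweedler-notation bookkeeping in the $Q$-computation, where one must keep the algebra-morphism property of $\Delta$ and the Leibniz identity in play simultaneously and track which tensor leg each factor lands on. Once that term-by-term matching is verified, the generation hypothesis on $V$ does the rest, and together with the trivial forward implication this establishes the claimed equivalence.
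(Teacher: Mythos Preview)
Your proposal is correct and follows essentially the same approach as the paper: define the subset of elements $a$ for which the preLie identity (respectively the coproduct compatibility) holds for all $b,c$, check that this subset contains the unit and $V$, and show it is closed under $\cdot$ via the Leibniz identity, hence equals $A$. The only cosmetic omission is that you do not explicitly verify $1\in B'$ for the coproduct compatibility, but this is immediate from $1\bullet b=0$ and $\Delta(1)=1\otimes 1$, exactly as in the paper.
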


\begin{proof} $\Longrightarrow$. Obvious, by definition of a Com-PreLie algebra. $\Longleftarrow$.
 We consider 
\[B=\{ a\in A\mid \forall b,c\in A, (a\bullet b)\bullet c-a\bullet (b\bullet c)=(a\bullet c)\bullet b-a\bullet (c\bullet b)\}.\]
We denote by $1_A$ the unit of $A$. 
Copying the proof of lemma \ref{lem3}-1, we obtain that $1_A.b=0$ for all $b\in A$. This easily implies that $1_A\in B$. By hypothesis, $V \subseteq B$.
Let $a_1,a_2 \in B$. For all $b,c \in A$,
\begin{align*}
&((a_1a_2)\bullet b)\bullet c-(a_1a_2)\bullet (b \bullet c)\\
&=((a_1\bullet b) \bullet c) a_2+(a_1\bullet b)(a_2\bullet c)
+(a_1\bullet c)(a_2\bullet b)+a_1((a_2\bullet b)\bullet c)\\
&-(a_1\bullet (b\bullet c))a_2-a_1(a_2\bullet (b\bullet c))\\
&=((a_1\bullet b) \bullet c-a_1\bullet (b\bullet c)) a_2+a_1((a_2\bullet b)\bullet c-a_2\bullet (b\bullet c))\\
&+(a_1\bullet b)(a_2\bullet c)+(a_1\bullet c)(a_2\bullet b).
\end{align*}
As $a_1,a_2\in B$, this is symmetric in $b,c$, so $a_1a_2\in B$. Hence, $B$ is a unitary subalgebra of $A$ which contains $V$, so is equal to $A$: $A$ is a Com-PreLie algebra.
Let us now consider 
\[C=\{a\in A\mid \forall b\in A, \Delta(a\bullet b)=a^{(1)}\otimes a^{(2)}\bullet b+a^{(1)}\bullet b^{(1)}\otimes a^{(2)}b^{(2)}\}.\]
By hypothesis, $V \subseteq C$. Let $b\in B$.
\[1_A\otimes 1_A\bullet b+1_A\bullet b^{(1)}\otimes 1b^{(2)}=0=\Delta(1_A\bullet b),\]
so $1_A\in C$. Let $a_1,a_2\in C$. For all $b\in A$,
\begin{align*}
\Delta((a_1a_2)\bullet b)&=\Delta((a_1\bullet b)a_2+a_1(a_2\bullet b))\\
&=a_1^{(1)}a_2^{(1)}\otimes  (a_1^{(2)}\bullet b)a_2^{(2)}+(a_1^{(1)}\bullet b^{(1)})a_2^{(1)}
\otimes a_1^{(2)} b^{(2)}a_2^{(2)}\\
&a_1^{(1)}a_2^{(1)}\otimes  a_1^{(2)}(a_2^{(2)}\bullet b)+a_1^{(1)}(a_2^{(1)}\bullet b^{(1)})
\otimes a_1^{(2)}a_2^{(2)} b^{(2)}\\
&=a_1^{(1)}a_2^{(1)}\otimes  (a_1^{(2)}a_2^{(2)})\bullet b+(a_1^{(1)}a_2^{(1)})\bullet b^{(1)}
\otimes a_1^{(2)}a_2^{(2)} b^{(2)}\\
&=(a_1a_2)^{(1)}\otimes (a_1a_2)^{(2)}\bullet b+(a_1a_2)^{(1)}\bullet b^{(1)}\otimes (a_1a_2)^{(2)}b^{(2)}.
\end{align*}
Hence, $a_1a_2\in C$, and $C$ is a unitary subalgebra of $A$. As it contains $V$, $C=A$ and $A$ is a Com-PreLie bialgebra. \end{proof}

\subsection{Coproduct on $\UCP(\D)$}

\begin{defi}
\begin{enumerate}
\item Let $T$ be a partitioned tree and $I\subseteq V(T)$. We shall say that $I$ is an  ideal of $T$ if for any vertex $v\in I$
and any vertex $w \in V(T)$ such that there exists an edge from $v$ to $w$, then $w\in I$. The set of ideals of $T$ is denoted $\Id(T)$.
\item Let $T$ be partitioned forest decorated by $\N\times I$, and $I\in \Id(T)$. 
\begin{itemize}
\item By restriction, $I$ is a partitioned decorated forest. The product $\cdot$ of the trees of $I$  is denoted by $P^I(F)$.
\item By restriction, $T\setminus I$ is a partitioned decorated tree. For any vertex $v \in T\setminus I$, if we denote by $(i,d)$ the decoration of $v$ in $T$,
we replace it by $(i+\iota_I(v),d)$, where $\iota_I(v)$ is the number of blocks $C$ of $T$, included in $I$,
such that there exists an edge from $v$ to any vertex of $C$.
The partitioned decorated tree obtained in this way is denoted by $R^I(F)$.
\end{itemize} \end{enumerate}\end{defi}

\begin{theo}
We define a coproduct on $\UCP(\D)$ by
\begin{align*}
&\forall T\in \PT(\N\times \D),&\Delta(T)&=\sum_{I\in \Id(T)} R^I(T)\otimes P^I(T).
\end{align*}
Then $\UCP(\D)$ is a Com-PreLie bialgebra. Moreover, $\CP(\D)$ and $\h_{\CK}^\D$ are Com-PreLie bialgebra quotients of $\UCP(\D)$,
and $\h_{\CK}^\D$ is the Connes-Kreimer Hopf algebra of decorated rooted trees \cite{Connes98,Foissy3}. 
\end{theo}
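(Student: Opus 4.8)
The plan is to proceed in three stages: first establish that $(UCP(\D),\cdot,\Delta)$ is a bialgebra, then upgrade it to a Com-PreLie bialgebra by means of the criterion proved just above, and finally pass to the two quotients.

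First I would take the counit $\varepsilon$ to be the projection onto $\K\emptyset$ and check the bialgebra axioms. That $\Delta$ is an algebra morphism for $\cdot$ is straightforward: an ideal of $T\cdot T'$ is precisely a disjoint union of an ideal of $T$ and an ideal of $T'$, the cut-off forests multiply, and since any block entirely contained in the ideal lies inside $T$ or inside $T'$, the shift $\iota_I$ splits additively, giving $R^I(T\cdot T')=R^{I_1}(T)\cdot R^{I_2}(T')$ and $P^I(T\cdot T')=P^{I_1}(T)\cdot P^{I_2}(T')$. Counitality is immediate from the two extreme ideals $I=\emptyset$ and $I=V(T)$. The coassociativity is the first genuine obstacle: both $(\Delta\otimes\Id)\circ\Delta(T)$ and $(\Id\otimes\Delta)\circ\Delta(T)$ should be rewritten as a single sum over nested pairs of ideals $I\subseteq I'$ of $T$, and the point to be checked is that two successive cuts produce the same total integer-decoration correction as the corresponding combined cut; concretely one must verify that the functions $\iota$ compose additively under nesting. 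This bookkeeping on the $\N$-decorations is exactly what the complementary integer decoration was introduced for, and it is the technical heart of this stage.

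Having the bialgebra, I would note that $UCP(\D)$ is already a Com-PreLie algebra, so the preLie and Leibniz identities hold globally, and then invoke the criterion of the preceding lemma with $V$ the span of the partitioned trees having a single root (these generate $(UCP(\D),\cdot)$, since the product merges root blocks and the one-rooted trees are precisely the $\cdot$-indecomposables). Because the preLie identity is automatic, only the compatibility $\Delta(x\bullet b)=x^{(1)}\otimes x^{(2)}\bullet b+x^{(1)}\bullet b^{(1)}\otimes x^{(2)}b^{(2)}$ remains, and only for such $x$. Equivalently I would show that $\Delta$ is a morphism of unitary Com-PreLie algebras from $UCP(\D)$ into $(UCP(\D)\otimes UCP(\D),\cdot,\bullet_\varepsilon)$ of Lemma \ref{lem23}: the target is Com-PreLie, $\Delta$ respects $\cdot$ by the first stage, and the identity above is precisely $\Delta(x\bullet b)=\Delta(x)\bullet_\varepsilon\Delta(b)$. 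The verification is a grafting-versus-cutting computation: since $x\bullet b$ is the sum of the graftings of $b$ onto the vertices of $x$, together with the decoration shift coming from $\emptyset$, one matches an ideal of a grafted tree with the data of an ideal of $x$, an ideal of $b$, and the choice of whether the grafting vertex is cut, which reproduces the two terms on the right-hand side. This grafting-cut matching, again tracking the $\iota$-shifts, is the second obstacle.

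Finally, for the quotients I would check that the defining Com-PreLie ideals are coideals. For $CP(\D)=UCP(\D)/I_f$ the generators $\tun_{(1,d)}-f(\tun_{(0,d)})$ are differences of primitive elements, so their images under $\Delta$ already lie in $I_f\otimes UCP(\D)+UCP(\D)\otimes I_f$; since the set of elements whose coproduct lies in $I_f\otimes A+A\otimes I_f$ is stable under $\cdot$ and $\bullet$, the whole ideal is a coideal and $\Delta$ descends. The same argument applies to the ideal $J$ of Proposition \ref{prop17}, using that cutting a tree commutes with replacing a $\times$-grafting by the corresponding $\cdot$-grafting, so $CP(\D)$ and $\h_{CK}^\D$ inherit Com-PreLie bialgebra structures. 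For the last identification, once the blocks are forgotten the partitioned trees become ordinary rooted forests and the formula $\Delta(T)=\sum_{I\in\Id(T)}R^I(T)\otimes P^I(T)$ becomes the sum over admissible cuts, the integer decorations having been absorbed by the relation $T\bullet\emptyset=|T|\,T$; this is by inspection the Connes-Kreimer coproduct, so $\h_{CK}^\D$ is the Connes-Kreimer Hopf algebra of decorated rooted trees.
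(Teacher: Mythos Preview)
Your approach is correct but differs substantially from the paper's.

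The paper never verifies coassociativity or the preLie--coproduct compatibility for the combinatorial formula $\Delta$ directly. Instead it exploits that $UCP(\D)$ is \emph{free} as a unitary Com-PreLie algebra: by Lemma~\ref{lem23} the tensor square $UCP(\D)\otimes_\varepsilon UCP(\D)$ is again unitary Com-PreLie, so the universal property produces a unique Com-PreLie morphism $\Delta':UCP(\D)\to UCP(\D)\otimes_\varepsilon UCP(\D)$ sending each $\tun_{(0,d)}$ to $\tun_{(0,d)}\otimes 1+1\otimes\tun_{(0,d)}$. Coassociativity and counitality of $\Delta'$ then follow from the uniqueness clause of the universal property (both $(\Delta'\otimes\Id)\circ\Delta'$ and $(\Id\otimes\Delta')\circ\Delta'$ are Com-PreLie morphisms agreeing on generators, using Lemma~\ref{lem24}). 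Only \emph{afterwards} does the paper prove, by induction on $|T|$ and using Proposition~\ref{prop6} together with Lemma~\ref{lemmacoproduit}, that $\Delta'$ coincides with the combinatorial cut formula $\Delta$. This completely bypasses the nested-ideal bookkeeping on the $\iota$-shifts that you identify as ``the first genuine obstacle''.

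Your route---checking the bialgebra axioms and then the compatibility on one-rooted generators via the criterion lemma---is sound and more elementary, but it forces you to do that $\iota$-shift accounting by hand in two places (coassociativity and the grafting-vs-cut matching). The paper's use of freeness trades those computations for a single induction showing $\Delta'=\Delta$. For the quotient by $J$ and the identification of $\h_{CK}^\D$, the paper is also slightly more structured than your sketch: it computes $\Delta(\tdun{$d$}\bullet(x\times y))$ and $\Delta(\tdun{$d$}\bullet(x\cdot y))$ via Proposition~\ref{prop6} to see explicitly that their difference lies in $J\otimes CP(\D)+CP(\D)\otimes J$, and then recognizes the Connes--Kreimer Hopf algebra through the $1$-cocycle equation $\Delta\circ B_d(F)=B_d(F^{(1)})\otimes F^{(2)}+\emptyset\otimes B_d(F)$ rather than by inspection of the cut formula.
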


\begin{proof} We consider 
\[\varepsilon:\left\{\begin{array}{rcl}
\UCP(\D)&\longrightarrow&\K\\
F&\longrightarrow&\delta_{F,1}.
\end{array}\right.\]
By lemma \ref{lem24}-1, $\UCP(\D) \otimes_\varepsilon \UCP(\D)$ is a Com-PreLie algebra. It is unitary, the unit being $\emptyset\otimes \emptyset$.
Hence, there exists a unique Com-PreLie algebra morphism $\Delta':\UCP(\D)\longrightarrow \UCP(\D) \otimes_\varepsilon \UCP(\D)$,
sending $\tdun{$(0,d)$}\hspace{4mm}$ over $\tdun{$(0,d)$}\hspace{4mm} \otimes \emptyset+\emptyset\otimes \tdun{$(0,d)$}\hspace{4mm}$ for all $d\in \D$. 
By lemma \ref{lem24}-2, $(\UCP(\D) \otimes_\varepsilon \UCP(\D)) \otimes_{\varepsilon \otimes \varepsilon} UPC(\D)$
and $\UCP(\D) \otimes_\varepsilon (\UCP(\D) \otimes_\varepsilon \UCP(\D))$ are equal, and as both $(\id \otimes \Delta')\circ \Delta'$
and $(\Delta' \otimes \id)\circ \Delta'$ are Com-PreLie algebra morphisms sending $\tdun{$(0,d)$}\hspace{4mm}$ over
$\tdun{$(0,d)$}\hspace{4mm} \otimes \emptyset\otimes \emptyset+\emptyset\otimes \tdun{$(0,d)$}\hspace{4mm}\otimes \emptyset+\emptyset\otimes \emptyset\otimes \tdun{$(0,d)$}\hspace{4mm}$ for all $d\in \D$, they are equal:
$\Delta'$ is coassociative. Moreover, $(\id \otimes \varepsilon)\circ \Delta'$ and $(\varepsilon \otimes \id)\circ \Delta'$ are Com-PreLie endomorphisms
of $\UCP(\D)$ sending $\tdun{$(0,d)$}\hspace{4mm} $ over itself for all $d\in \D$, so they are both equal to $\id$: $\varepsilon$ is the counit of $\Delta'$.
Hence, with this coproduct $\Delta'$, $\UCP(\D)$ is a Com-PreLie bialgebra.\\

Let us now prove that $\Delta(\T)=\Delta'(\T)$ for all $T\in \PT(\N\times \D)$. We proceed by induction on the number of vertices $n$ of $T$.
If $n=0$ or $n=1$, it is obvious. Let us assume the result at all ranks $<n$. If $T$ has strictly more than one root, we can write
$T=T'\cdot T''$, where $T'$ and $T''$ has strictly less that $n$ vertices. It is easy to see that the ideals of $T$ are the parts of $T'\sqcup T''$ of the form
$I' \sqcup I''$, such that $I'\in \Id(\T')$ and $I''\in \Id(\T'')$. Moreover, for such an ideal of $T$, 
\begin{align*}
R^{I'\sqcup I''}(T'\cdot T'')&=R^{I'}(T')\cdot R^{I''}(T''),&P^{I'\sqcup I''}(T'\cdot T'')&=P^{I'}(T')\cdot P^{I''}(T'').
\end{align*}
Hence,
\begin{align*}
\Delta(T)&=\sum_{I'\in \Id(\T'),\: I''\in \Id(\T'')} R^{I'}(T')\cdot  R^{I''}(T'')\otimes R^{I'}(T') R^{I''}(T'')\\
&=\Delta(T) \cdot \Delta(T'')\\
&=\Delta'(T')\cdot \Delta'(T'')\\
&=\Delta'(T\cdot T'')\\
&=\Delta(T).
\end{align*}
If $T$ has only one root, we can write $T=\tdun{$(i,d)$}\hspace{4mm} \bullet(T_1\times\ldots \times T_k)$, 
where $T_1,\ldots,T_k\in \PT(\N\times \D)$. The induction hypothesis holds for $T_1,\ldots,T_N$. The ideals of $T$ are 
\begin{itemize}
\item $T$ itself: for this ideal $I$, $P^I(T)=T$ and $R^I(T)=\emptyset$.
\item Ideals $I_1\sqcup \ldots \sqcup I_k$, where $I_j$ is an ideal of $T_j$ for all $j$. For such an ideal $I$, 
$P^I(T)=P^{I_1}(T_1)\cdot \ldots \cdot P^{I_k}(T_k)$. Let $J=\{i_1,\ldots ,i_p\}$ be the set of indices $i$ such that $I_i=T_i$, 
that is to say the number of blocks $C$ of $I$ such that is an edge from the root of $T$
to any vertex of $C$. Then 
\begin{align*}
R^I(T)&=\tdun{$(i+p,d)$}\hspace{8mm}\bullet \prod_{j\notin J}^\times R^{I_j}(T_j)\\
&=f_{\UCP(\D)}^l(\tdun{$(i,d)$}\hspace{4mm})\bullet\prod_{j\notin J}^\times R^{I_j}(T_j)\\
&=\tdun{$(i,d)$}\hspace{4mm}\bullet \emptyset^{\times p}\times t\prod_{j\notin J}^\times R^{I_j}(T_j)\\
&=\tdun{$(i,d)$}\hspace{4mm} \bullet R^{I_1}(T_1)\times \ldots \times R^{I_k}(T_k).
\end{align*}
We used lemma \ref{lemmacoproduit} for the third equality.
\end{itemize}
By Proposition \ref{prop6}, with $a=\tdun{$(i,d)$}\hspace{4mm}$ and $b_1\times \ldots \times b_n=T_1\times \ldots \times T_k$,
\begin{align*}
\Delta'(T)&=\sum_{I\subseteq [k]} \tdun{$(i,d)$}\hspace{4mm}\bullet \left(\prod_{i\in I}^\times T_i^{(1)}\right)
\otimes \left(\prod_{i\in I} T_i^{(2)}\right) \emptyset\bullet\left(\prod_{i\notin I}^\times T_i\right)\\
&+\sum_{I\subseteq [k]}\emptyset \bullet \left(\prod_{i\in I}^\times T_i^{(1)}\right)
\otimes \left(\prod_{i\in I} T_i^{(2)}\right)\tdun{$(i,d)$}\hspace{4mm}\bullet\left(\prod_{i\notin I}^\times T_i\right)\\
&=\tdun{$(i,d)$}\hspace{4mm}\bullet T_1^{(1)}\times \ldots \times T^{(1)}_k\otimes T_1^{(2)}\cdot \ldots \cdot T_k^{(2)}+0\\
&+\emptyset\otimes \tdun{$(i,d)$}\hspace{4mm} \bullet T_1\times \ldots \times T_k\\
&=\sum_{I_j \in \Id(T_j)}\tdun{$(i,d)$}\hspace{4mm}\bullet R^{I_1}(T_1)\times \ldots \times R^{I_k}(T_k)\otimes 
P^{I_1}(T_1)\cdot \ldots \cdot P^{I_k}(T_k)+\emptyset \otimes T\\
&=\sum_{I\in \Id(T),\: I\neq T} R^I(T)\otimes P^I(T)+\emptyset \otimes T\\
&=\sum_{I\in \Id(T)} R^I(T)\otimes P^I(T)\\
&=\Delta(T).
\end{align*}
Hence, $\Delta'=\Delta$.\\

For all $d\in \D$, $\tdun{$(0,d)$}\hspace{4mm}-\tdun{$(1,d)$}\hspace{4mm}$ is primitive, 
so $\Delta(\tdun{$(0,d)$}\hspace{4mm}-\tdun{$(1,d)$}\hspace{4mm})\in I\otimes \UCP(\D)+\UCP(\D) \otimes I$.
Consequently, $I$ is a coideal, and the quotient $\UCP(\D)/I=\CP(\D)$ is a Com-PreLie bialgebra. 

Let $x,y \in \CP(\D)$. By Proposition \ref{prop6}, as $\tdun{$d$} $ is primitive,
\[\Delta(\tdun{$d$}\bullet(x \times y))=\tdun{$d$}\bullet (x^{(1)}\times y^{(1)})\otimes x^{(2)}\cdot y^{(2)}+
1\otimes \tdun{$d$}\bullet(x \times y),\]
whereas, by the 1-cocycle property,
\[\Delta(\tdun{$d$}\bullet(x \cdot y))=\tdun{$d$}\bullet (x^{(1)}\cdot y^{(1)})\otimes x^{(2)}\cdot y^{(2)}+
\otimes \tdun{$d$}\bullet(x \cdot y).\]
Hence,
\begin{align*}
\Delta(\tdun{$d$}\bullet(x \times y)-\tdun{$d$}\bullet(x \cdot y))
&=\underbrace{(\tdun{$d$}\bullet (x^{(1)}\times y^{(1)})-\tdun{$d$}\bullet (x^{(1)}\cdot y^{(1)}))}_{\in J}\otimes x^{(2)}\cdot y^{(2)}\\
&+1\otimes \underbrace{ (\tdun{$d$}\bullet(x \times y)-\tdun{$d$}\bullet(x \cdot y))}_{\in J}\\
&\in J\otimes \CP(\D)+\CP(\D)\otimes J,
\end{align*}
so $J$ is a coideal and $\CP(\D)/J=\h_{\CK}^\D$ is a Com-PreLie bialgebra.\\

Let us consider 
\[B_d:\left\{\begin{array}{rcl}
\h_{\CK}^\D&\longrightarrow&\h_{\CK}^\D\\
T_1\ldots T_k&\longrightarrow&\tdun{$d$}\bullet T_1\times \ldots \times T_k,
\end{array}\right.\]
where $T_1,\ldots, T_k$ are rooted trees decorated by $\D$. In other terms, $B_d(T_1\ldots T_k)$ is the tree obtained by grafting the forest 
$T_1\ldots T_k$ on a common root decorated by $d$. By Proposition \ref{prop6} and lemma \ref{lemmacoproduit}, 
for all forest $F=T_1\ldots T_k\in \h_{\CK}^\D$,
\begin{align*}
\Delta\circ B_d(F)&=\tdun{$d$}\bullet T_1^{(1)}\times \ldots \times T^{(1)}_k\otimes T_1^{(2)} \ldots T_k^{(2)}+0
+\emptyset\otimes \tdun{$d$} \bullet T_1\times \ldots \times T_k\\
&=B_d(F^{(1)})\otimes F^{(2)}+\emptyset \otimes B_d(F).
\end{align*}
We recognize the $1$-cocycle property which characterizes the Connes-Kreimer coproduct of rooted trees, so $\h_{\CK}^\D$ is indeed the 
Connes-Kreimer Hopf algebra. \end{proof}

\begin{example} Let $i,j,k \in \N$ and $d,e,f\in \D$. In $\UCP(\D)$,
\begin{align*}
\Delta\tdun{$(i,d)$}\hspace{4mm}&=\tdun{$(i,d)$}\hspace{4mm}\otimes \emptyset+\emptyset\otimes \tdun{$(i,d)$}\hspace{4mm},\\
\Delta\tddeux{$(i,d)$}{$(j,e)$}\hspace{4mm}&=\tddeux{$(i,d)$}{$(j,e)$}\hspace{4mm}\otimes \emptyset
+\emptyset\otimes \tddeux{$(i,d)$}{$(j,e)$}\hspace{4mm}+ \tdun{$(i+1,d)$}\hspace{9mm}\otimes  \tdun{$(j,e)$}\hspace{4mm},\\
\Delta\hspace{4mm}\tdtroisun{$(i,d)$}{$(k,f)$}{\hspace{-5mm}$(j,e)$}\hspace{3mm}&=
\hspace{4mm}\tdtroisun{$(i,d)$}{$(k,f)$}{\hspace{-5mm}$(j,e)$}\hspace{3mm}\otimes \emptyset
+\emptyset\otimes \hspace{4mm}\tdtroisun{$(i,d)$}{$(k,f)$}{\hspace{-5mm}$(j,e)$}\hspace{3mm}\\
&+\tddeux{$(i+1,d)$}{$(j,e)$}\hspace{9mm}\otimes \tdun{$(k,f)$}\hspace{5mm}+
\tddeux{$(i+1,d)$}{$(k,f)$}\hspace{9mm}\otimes \tdun{$(j,e)$}\hspace{5mm}+
\tdun{$(i+2,d)$}\hspace{9mm}\otimes \hspace{5mm}\hddeux{\hspace{-5mm}$(j,e)$}{$(k,f)$}\hspace{5mm},\\
\Delta\hspace{4mm}\hdtroisun{$(i,d)$}{$(k,f)$}{\hspace{-5mm}$(j,e)$}\hspace{3mm}&=
\hspace{4mm}\hdtroisun{$(i,d)$}{$(k,f)$}{\hspace{-5mm}$(j,e)$}\hspace{3mm}\otimes \emptyset
+\emptyset\otimes \hspace{4mm}\hdtroisun{$(i,d)$}{$(k,f)$}{\hspace{-5mm}$(j,e)$}\hspace{3mm}\\
&+\tddeux{$(i,d)$}{$(j,e)$}\hspace{5mm}\otimes \tdun{$(k,f)$}\hspace{5mm}+
\tddeux{$(i,d)$}{$(k,f)$}\hspace{5mm}\otimes \tdun{$(j,e)$}\hspace{5mm}+
\tdun{$(i+1,d)$}\hspace{9mm}\otimes \hspace{5mm}\hddeux{\hspace{-5mm}$(j,e)$}{$(k,f)$}\hspace{5mm},\\
\Delta\tdtroisdeux{$(i,d)$}{$(j,e)$}{$(k,f)$}\hspace{4mm}&=\tdtroisdeux{$(i,d)$}{$(j,e)$}{$(k,f)$}\hspace{4mm}
\otimes \emptyset +\emptyset\otimes\tdtroisdeux{$(i,d)$}{$(j,e)$}{$(k,f)$}\hspace{4mm}
+\tddeux{$(i,d)$}{$(j+1,e)$}\hspace{9mm}\otimes \tdun{$(k,f)$}\hspace{4mm}
+\tdun{$(i+1,d)$}\hspace{9mm}\otimes \tddeux{$(j,e)$}{$(k,f)$}\hspace{4mm}.
\end{align*}
In $\CP(\D)$,
\begin{align*}
\Delta\tdun{$d$}&=\tdun{$d$}\otimes \emptyset+\emptyset\otimes \tdun{$d$},\\
\Delta\tddeux{$d$}{$e$}&=\tddeux{$d$}{$e$}\otimes \emptyset+\emptyset\otimes \tddeux{$d$}{$e$}+ \tdun{$d$}\otimes  \tdun{$e$},\\
\Delta\tdtroisun{$d$}{$f$}{$e$}&=\tdtroisun{$d$}{$f$}{$e$}\otimes \emptyset+\emptyset\otimes \tdtroisun{$d$}{$f$}{$e$}
+\tddeux{$d$}{$e$}\otimes \tdun{$f$}+\tddeux{$d$}{$f$}\otimes \tdun{$e$}+\tdun{$d$}\otimes\hddeux{$e$}{$f$},\\
\Delta\hdtroisun{$d$}{$f$}{$e$}&=\hdtroisun{$d$}{$f$}{$e$}\otimes \emptyset+\emptyset\otimes \hdtroisun{$d$}{$f$}{$e$}
+\tddeux{$d$}{$e$}\otimes \tdun{$f$}+\tddeux{$d$}{$f$}\otimes \tdun{$e$}+\tdun{$d$}\otimes\hddeux{$e$}{$f$},\\
\Delta\tdtroisdeux{$d$}{$e$}{$f$}&=\tdtroisdeux{$d$}{$e$}{$f$}\otimes \emptyset +\emptyset\otimes\tdtroisdeux{$d$}{$e$}{$f$}
+\tddeux{$d$}{$e$}\otimes \tdun{$f$}+\tdun{$d$}\otimes \tddeux{$e$}{$f$}.
\end{align*}
In $\h_{\CK}^\D$,
\begin{align*}
\Delta\tdun{$d$}&=\tdun{$d$}\otimes \emptyset+\emptyset\otimes \tdun{$d$},\\
\Delta\tddeux{$d$}{$e$}&=\tddeux{$d$}{$e$}\otimes \emptyset+\emptyset\otimes \tddeux{$d$}{$e$}+ \tdun{$d$}\otimes  \tdun{$e$},\\
\Delta\tdtroisun{$d$}{$f$}{$e$}&=\tdtroisun{$d$}{$f$}{$e$}\otimes \emptyset+\emptyset\otimes \tdtroisun{$d$}{$f$}{$e$}
+\tddeux{$d$}{$e$}\otimes \tdun{$f$}+\tddeux{$d$}{$f$}\otimes \tdun{$e$}+\tdun{$d$}\otimes \tdun{$e$}\tdun{$f$},\\
\Delta\tdtroisdeux{$d$}{$e$}{$f$}&=\tdtroisdeux{$d$}{$e$}{$f$}\otimes \emptyset +\emptyset\otimes\tdtroisdeux{$d$}{$e$}{$f$}
+\tddeux{$d$}{$e$}\otimes \tdun{$f$}+\tdun{$d$}\otimes \tddeux{$e$}{$f$}.
\end{align*}\end{example}

\subsection{An application: Connes-Moscovici subalgebras}

Let us fix a set $\D$ of decorations. For any $d\in \D$, we define an operator $N_d:\h_{\CK}^\D\longrightarrow \h_{\CK}^\D$ by 
\begin{align*}
&\forall x\in \h_{\CK}^\D,&N_d(x)&=x\bullet \tdun{$d$}.
\end{align*}
In other words, if $F$ is a rooted forest, $N_d(F)$ is the sum of all forests obtained by grafting a leaf decorated by $d$ on a vertex of $F$:
when $\D$ is reduced to a singleton, this is the growth operator $N$ of \cite{Connes98}.

For all $k\geq 1$, $i_1,\ldots,i_k\in \D$, we put 
\begin{align*}
X_{i_1,\ldots,i_k}&=N_{i_k}\circ \ldots \circ N_{i_2}(\tdun{$i_1$}).
\end{align*}
When $|\D|=1$, these are the generators of the Connes-Moscovici subalgebra of \cite{Connes98}.

\begin{prop}
Let $\h_{CM}^\D$ be the subalgebra of $\h_{\CK}^\D$ generated by all the elements $X_{i_1,\ldots,i_k}$. Then $\h_{CM}^\D$ is a Hopf subalgebra.
\end{prop}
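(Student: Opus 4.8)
The plan is to show that the coproduct $\Delta$ sends $\h_{CM}^\D$ into $\h_{CM}^\D\otimes\h_{CM}^\D$. Since $\Delta$ is an algebra morphism for $\cdot$ and $\h_{CM}^\D$ is generated as an algebra by the elements $X_{i_1,\ldots,i_k}$, it suffices to prove $\Delta(X_{i_1,\ldots,i_k})\in\h_{CM}^\D\otimes\h_{CM}^\D$ for every generator: indeed $\h_{CM}^\D\otimes\h_{CM}^\D$ is a subalgebra of $\h_{CK}^\D\otimes\h_{CK}^\D$. The whole argument then rests on a single coproduct formula for the grafting operators $N_d$, which I would establish first. Writing $Y(a)=a\bullet\emptyset$, the claim is that for all $a\in\h_{CK}^\D$,
\begin{align*}
\Delta(N_d(a))&=N_d(a^{(1)})\otimes a^{(2)}+a^{(1)}\otimes N_d(a^{(2)})+Y(a^{(1)})\otimes a^{(2)}\cdot\tdun{$d$}.
\end{align*}

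This formula comes straight from the bialgebra compatibility of Definition \ref{defi1}, namely $\Delta(a\bullet b)=a^{(1)}\otimes a^{(2)}\bullet b+a^{(1)}\bullet b^{(1)}\otimes a^{(2)}\cdot b^{(2)}$, specialised to $b=\tdun{$d$}$. Since $\tdun{$d$}$ is primitive in $\h_{CK}^\D$, one has $\Delta(\tdun{$d$})=\tdun{$d$}\otimes\emptyset+\emptyset\otimes\tdun{$d$}$: the first summand of the axiom yields $a^{(1)}\otimes N_d(a^{(2)})$, while the two halves of $\Delta(\tdun{$d$})$ in the second summand yield respectively $N_d(a^{(1)})\otimes a^{(2)}$ (using $a^{(2)}\cdot\emptyset=a^{(2)}$) and $a^{(1)}\bullet\emptyset\otimes a^{(2)}\cdot\tdun{$d$}=Y(a^{(1)})\otimes a^{(2)}\cdot\tdun{$d$}$.

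Next I would record that $\h_{CM}^\D$ is stable under the three operations occurring on the right-hand side, so that the formula can be iterated. First, $N_d$ is a derivation of $(\h_{CK}^\D,\cdot)$: taking $c=\tdun{$d$}$ in the Leibniz identity gives $N_d(x\cdot y)=N_d(x)\cdot y+x\cdot N_d(y)$; moreover $N_d$ sends generators to generators, $N_d(X_{i_1,\ldots,i_k})=X_{i_1,\ldots,i_k,d}$, so a derivation carrying a generating family into $\h_{CM}^\D$ preserves the subalgebra it generates, whence $N_d(\h_{CM}^\D)\subseteq\h_{CM}^\D$. Second, each $X_{i_1,\ldots,i_k}$ is homogeneous of degree $k$ for the grading by number of vertices, so $\h_{CM}^\D$ is a graded subspace and $Y$ acts on it as multiplication by the degree, giving $Y(\h_{CM}^\D)\subseteq\h_{CM}^\D$. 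Third, $\tdun{$d$}=X_d\in\h_{CM}^\D$, so right multiplication by $\tdun{$d$}$ preserves the subalgebra. Now I induct on $k$: for $k=1$, $X_{i_1}=\tdun{$i_1$}$ is primitive and $\Delta(X_{i_1})=X_{i_1}\otimes\emptyset+\emptyset\otimes X_{i_1}\in\h_{CM}^\D\otimes\h_{CM}^\D$; for the step, set $a=X_{i_1,\ldots,i_{k-1}}$, so $X_{i_1,\ldots,i_k}=N_{i_k}(a)$, and apply the key formula. The three terms are obtained by applying $N_{i_k}\otimes\mathrm{id}$, $\mathrm{id}\otimes N_{i_k}$ and $Y\otimes(-\cdot\tdun{$i_k$})$ to $\Delta(a)$, which by the induction hypothesis lies in $\h_{CM}^\D\otimes\h_{CM}^\D$; by the three stability statements each operator maps $\h_{CM}^\D\otimes\h_{CM}^\D$ into itself, so $\Delta(X_{i_1,\ldots,i_k})\in\h_{CM}^\D\otimes\h_{CM}^\D$.

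This yields $\Delta(\h_{CM}^\D)\subseteq\h_{CM}^\D\otimes\h_{CM}^\D$; the counit $\varepsilon$ obviously restricts to $\h_{CM}^\D$, and since $\h_{CM}^\D$ is a graded connected subbialgebra (its degree-zero part is $\K\emptyset$), its antipode is defined by the usual recursion and coincides with the restriction of the antipode of $\h_{CK}^\D$. Hence $\h_{CM}^\D$ is a Hopf subalgebra. I expect the only genuinely load-bearing observations to be the two structural facts that $N_d$ is a derivation sending generators to generators (so it preserves $\h_{CM}^\D$) and that $Y=\bullet\,\emptyset$ is diagonal on the graded subalgebra; once these and the coproduct formula are in place, the induction is a one-liner and there is no serious obstacle.
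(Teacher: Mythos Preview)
Your proof is correct and follows essentially the same route as the paper: both derive the coproduct formula for $N_d$ from the Com-PreLie bialgebra axiom and the primitivity of $\tdun{$d$}$, establish stability of $\h_{CM}^\D$ under $N_d$ (via the Leibniz identity) and under $x\mapsto x\bullet\emptyset$ (via homogeneity), and then induct on $k$. Your write-up is in fact slightly more complete, since you spell out the antipode argument via graded connectedness, which the paper leaves implicit.
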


\begin{proof} Note that $N_d$ is a derivation; as $N_d(X_{i_1,\ldots,i_k})=X_{i_1,\ldots,i_k,d}$ for all $i_1,\ldots,i_k,d\in \D$,
$\h_{CM}^\D$ is stable under $N_d$ for any $d\in \D$. As the $X_{i_1,\ldots,i_k}$ are homogeneous of degree $k$,
\[X_{i_1,\ldots,i_k} \bullet 1=kX_{i_1,\ldots,i_k}.\]
Hence, $\h_{CM}^\D$ is stable under the derivation $D:x\mapsto x\bullet 1$. We obtain 
\begin{align}
\label{EQ9}
\Delta(X_{i_1,\ldots,i_k})&=\Delta(X_{i_1,\ldots,i_{k-1}}\bullet \tdun{$i_k$}\:)\\
\nonumber&=X_{i_1,\ldots,i_{k-1}}^{(1)}\otimes X_{i_1,\ldots,i_{k-1}}^{(2)}\bullet \tdun{$i_k$}\:\\
\nonumber&+X_{i_1,\ldots,i_{k-1}}^{(1)}\bullet \tdun{$i_k$}\: \otimes X_{i_1,\ldots,i_{k-1}}^{(2)}
+X_{i_1,\ldots,i_{k-1}}^{(1)}\bullet \emptyset\otimes X_{i_1,\ldots,i_{k-1}}^{(2)} \tdun{$i_k$}\:.
\end{align} 
An easy induction on $k$ proves that $\Delta(X_{i_1,\ldots,_k})$ belongs to $\h_{CM}^\D \otimes \h_{CM}^\D$. \end{proof}

\begin{prop}
We assume that $\D$ is finite.Then $\h_{CM}^\D$ is the graded dual of the enveloping algebra of the augmentation ideal
of the Com-PreLie algebra $T(V,f)$, where $V=\vect(\D)$ and $f=\id_V$.
\end{prop}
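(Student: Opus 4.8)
The plan is to dualize, reduce the statement to an isomorphism of graded Lie algebras, and then realize that isomorphism through an explicit pairing governed by the recursion (\ref{EQ9}) and the product formula (\ref{EQ6}). Since $\D$ is finite and each generator $X_{i_1,\ldots,i_k}$ is homogeneous of degree $k$, the Hopf algebra $\h_{CM}^\D$ is graded, connected and of finite type, and it is commutative. Its graded dual is therefore a graded connected cocommutative Hopf algebra, so by the Cartier--Quillen--Milnor--Moore theorem it is the enveloping algebra $U(\mathfrak{p})$ of its Lie algebra of primitives $\mathfrak{p}$. Writing $\g=T_+(V)$ for the augmentation ideal of $T(V,Id)$ endowed with the bracket $[a,b]=a\bullet b-b\bullet a$, the proposition becomes equivalent to producing a graded Lie algebra isomorphism $\g\simeq\mathfrak{p}$, equivalently a nondegenerate graded Hopf pairing between $U(\g)$ and $\h_{CM}^\D$.

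First I would set up this pairing along the evident bijection between the two indexing sets: a homogeneous basis of $\g$ in degree $k$ is given by the words $i_1\ldots i_k$ (with $i_j\in\D$), and these index exactly the generators $X_{i_1,\ldots,i_k}$. The decisive simplification is that in $T(V,Id)$ formula (\ref{EQ6}) degenerates, on a single letter $d\in V$, to $d\bullet(v_1\ldots v_l)=d\,v_1\ldots v_l$, so that every word is an iterated left $\bullet$--product of letters; dually the Connes--Moscovici recursion $X_{i_1,\ldots,i_k}=N_{i_k}(X_{i_1,\ldots,i_{k-1}})$ builds the generators by iterated right grafting of one vertex. I would accordingly declare $i_1\ldots i_k$ to be paired with $X_{i_1,\ldots,i_k}$, extend the pairing to $U(\g)$ and to products in $\h_{CM}^\D$ by requiring bimultiplicativity with respect to the two coproducts, and then verify that this is forced to be a Hopf pairing. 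The content to be checked is that the grafting operator $N_d$ is transpose to left $\bullet$--multiplication by $d$, up to the correction coming from the grading operator $D:x\mapsto x\bullet\emptyset$ that occurs in (\ref{EQ9}).

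The verification is an induction on the degree, and this is where the combinatorics of (\ref{EQ9}) must be matched with (\ref{EQ6}). The coproduct recursion (\ref{EQ9}) for $X_{i_1,\ldots,i_k}$ has exactly three terms, and under the pairing these correspond to the three ways a word of $T(V,Id)$ can be cut relative to the last letter grafted: the outer grafting term reproduces the action of $f=Id$ on a letter, the inner grafting term reproduces the shuffle of the grafted letter into the tail, and the term carrying $\bullet\emptyset$ reproduces the number-of-letters factor. Feeding in the explicit shuffle description (\ref{EQshuffle}) of $\bullet$, one gets that the pairing is compatible with both products and both coproducts, and a direct computation of the induced bracket on $\mathfrak{p}$ returns $[a,b]=a\bullet b-b\bullet a$; this identifies $\mathfrak{p}$ with $\g$ as graded Lie algebras.

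It then remains to see that the pairing is perfect. Using Proposition \ref{prop6} to expand the coproducts, the matrix of the pairing between the PBW basis of $U(\g)$ and the monomials in the $X_{i_1,\ldots,i_k}$ is unitriangular with respect to the partial order refining the number of vertices, hence invertible in each degree; equivalently the Lie map $\g\to\mathfrak{p}$ induced by the pairing is injective by this triangularity and surjective because the $X_{i_1,\ldots,i_k}$ generate $\h_{CM}^\D$, so $\h_{CM}^\D$ is free commutative with $d^k$ indecomposable generators in degree $k$, matching $\dim U(\g)$ by PBW. The main obstacle is precisely this bookkeeping: proving term-by-term that the grafting/deconcatenation recursion (\ref{EQ9}) dualizes to the shuffle product (\ref{EQ6}) with $f=Id$, and that the resulting pairing is nondegenerate rather than merely well defined. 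Once that is established, the pairing identifies $\h_{CM}^\D$ with the graded dual of $U(\g)$, which is the assertion.
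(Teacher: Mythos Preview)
Your outline shares the paper's overall strategy: apply Cartier--Quillen--Milnor--Moore to the graded dual, identify the Lie algebra of primitives as $T_+(V)$ via the pairing $\langle X_{i_1,\ldots,i_k},j_1\ldots j_l\rangle=\delta_{(i_1,\ldots,i_k),(j_1,\ldots,j_l)}$, and show the induced bracket matches that of $T(V,Id_V)$. The difference is in the execution. Rather than building a Hopf pairing and checking its axioms term by term against the recursion (\ref{EQ9}), the paper works directly on the side of $\h_{CM}^\D$: it defines the cobracket $\delta=(\pi_W\otimes\pi_W)\circ\Delta$ on $W=Vect(X_{i_1,\ldots,i_k})$, and from (\ref{EQ9}) proves the closed formula
\[
\delta(X_{i_1,\ldots,i_k})=\sum_{\emptyset\subsetneq I\subsetneq[k]} m(I)\,X_{i_I}\otimes X_{i_{[k]\setminus I}},
\]
where $m(I)=\max\{i:[i]\subseteq I\}$. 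Transposing this through the pairing gives exactly the preLie product (\ref{EQshuffle}) with $f=Id$, and the identification is complete. This is shorter and more transparent than the inductive bimultiplicativity check you propose, because the three terms of (\ref{EQ9}) collapse into a single closed combinatorial formula rather than having to be tracked separately.

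There is one genuine gap in your argument. You write that the $X_{i_1,\ldots,i_k}$ generate $\h_{CM}^\D$, ``so $\h_{CM}^\D$ is free commutative with $d^k$ indecomposable generators in degree $k$'', and use this to count dimensions. Generation does not imply algebraic independence; you are assuming what you need to prove. Your proposed fix via unitriangularity is circular in the same way: speaking of ``the matrix of the pairing between the PBW basis of $U(\g)$ and the monomials in the $X_{i_1,\ldots,i_k}$'' presupposes those monomials are linearly independent. The paper sidesteps this by invoking the right-sided combinatorial bialgebra framework of \cite{Loday}: the property $\Delta(x)-x\otimes1-1\otimes x\in W\otimes\h_{CM}^\D$ for $x\in W$, inherited from $\h_{CK}^\D$, is what guarantees that the dual is $U(W^*)$ with a preLie structure, and this is what legitimizes identifying $\g$ with $W^*$ as a vector space before computing its bracket.
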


\begin{proof}
We put $W=\vect(X_{i_1,\ldots,i_k}\mid k\geq 1, i_1,\ldots,i_k\in \D)$. As this is the case for $\h_{\CK}^\D$, for any $x\in W$,
\[\Delta(x)-x\otimes 1+1\otimes x\in W\otimes \h_{CM}^\D.\]
This implies that the graded dual of $\h_{CM}^\D$ is the enveloping of a graded algebra $\g$; as a vector space,
$\g$ is identified with $W^*$ and its preLie product is dual of the bracket $\delta$ defined on $W$ by $(\pi_W\otimes \pi_W)\circ \Delta$,
where $\pi_W$ is the canonical projection on $W$ which vanishes on $(1)+(\h_{CM}^\D)_+^2$.
By (\ref{EQ9}), using Sweedler's notation $\delta(x)=x^{(1)}\otimes x^{(2)}$, we obtain 
\begin{align*}
\delta(X_{i_1,\ldots,i_{k+1}})&=X^{(1)}_{i_1,\ldots,i_k}\otimes X^{(2)}_{i_1,\ldots,i_k}\bullet X_{i_{k+1}}
+X^{(1)}_{i_1,\ldots,i_k}\bullet X_{i_{k+1}}\otimes X^{(2)}_{i_1,\ldots,i_k}+k X_{i_1,\ldots,i_k}\otimes X_{i_{k+1}}.
\end{align*}
We shall use the following notations. If $I\subseteq [k]$, we put 
\begin{itemize}
\item $m(I)=\max(i\mid [i]\subseteq I)$, with the convention $m(I)=0$ if $1\notin I$.
\item $X_{i_I}=X_{i_{p_1},\ldots i_{p_l}}$ if $I=\{p_1<\ldots<p_l\}$.
\end{itemize}
An easy induction proves that
\begin{align*}
&\forall i_1,\ldots,i_k\in \D,&\delta(X_{i_1,\ldots,i_k})&=\sum_{\emptyset \subsetneq I \subseteq [k]} m(I) X_{i_I}\otimes X_{i_{[k]\setminus I}}.
\end{align*}
We identify $W^*$ and $T(V)_+$ via the pairing given by
\begin{align*}
&\forall i_1,\ldots,i_k,j_1,\ldots,j_l \in \D,&\langle X_{i_1,\ldots,i_k} ,j_1\ldots j_l\rangle=\delta_{(i_1,\ldots,i_k),(j_1,\ldots,j_l)}.
\end{align*}
The preLie product on $T(V)_+$ induced by $\delta$ is then given by 
\begin{align*}
i_1\ldots i_k \bullet i_{k+1}\ldots i_{k+l}&=\sum_{\sigma \in \sh(k,l)} m_k(\sigma) i_{\sigma^{-1}(1)}\ldots i_{\sigma^{-1}(k+l)}.
\end{align*}
By (\ref{EQshuffle}), this is precisely the preLie product of $T(V,f)$. \end{proof}

\begin{remark} The following map is a bijection:
\begin{align*}
\theta_{k,l}:\left\{\begin{array}{rcl}
\sh(k,l)&\longrightarrow&\sh(l,k)\\
\sigma&\longrightarrow&(k+l\: k+l-1\ldots 1)\circ \sigma \circ (k+l\: k+l-1\ldots 1).
\end{array}\right.
\end{align*}
Moreover, for any $\sigma \in \sh(k,l)$,
\[m_l(\theta_{k,l}(\sigma))=\min \{i\in l\in \{k+1,\ldots,k+l\}\mid \sigma(i)=i,\ldots,\sigma(k+l)=\sigma(k+l)\}=m_l'(\sigma),\]
with the convention $m_l'(\sigma)=0$ if $\sigma(k+l)\neq k+l$. Then the Lie bracket associated to $\bullet$ is given by 
\begin{align*}
&\forall i_1,\ldots,i_{k+l}\in \D,&[i_1\ldots i_k,i_{k+1}\ldots i_{k+l}]&=\sum_{\sigma \in \sh(k,l)} (m_k(\sigma)-m_l'(\sigma))
i_{\sigma^{-1}(1)}\ldots i_{\sigma^{-1}(k+l)}.
\end{align*}\end{remark}

\subsection{A rigidity theorem for Com-PreLie bialgebras}

\begin{theo}\label{theo27}
Let $(A,m,\bullet,\Delta)$ be a connected Com-PreLie bialgebra. If $f_A$ (defined in Proposition \ref{prop4}) is surjective, 
then $(A,m,\Delta)$ and $(T(\prim(A)),\shuffle,\Delta)$ are isomorphic Hopf algebras.
\end{theo}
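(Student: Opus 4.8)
The plan is to build an explicit isomorphism of Hopf algebras $\Phi\colon(A,\cdot,\Delta)\longrightarrow (T(V),\shuffle,\Delta)$, where $V=Prim(A)$ and $\Delta$ denotes deconcatenation on $T(V)$. The idea is that \emph{any} connected commutative Com-PreLie bialgebra admits a canonical injective bialgebra morphism into the shuffle bialgebra on its primitives, and that the surjectivity of $f_A$ is exactly what makes this morphism onto, i.e. what makes the coalgebra $A$ cofree. First I would produce the morphism, then prove surjectivity from the hypothesis.

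For the morphism: I claim there is a projection $\pi\colon A\to V$ onto the primitives with $\pi(\emptyset)=0$ and $\pi(\overline A^2)=(0)$, where $\overline A=\ker\varepsilon$. Such a $\pi$ exists as soon as $V\cap\overline A^2=(0)$, and this last fact follows from the first Eulerian idempotent $e^{(1)}=\log_\star(\mathrm{id})$ (convolution logarithm of the identity, well defined on the connected bialgebra $A$): a direct convolution computation gives $e^{(1)}_{\mid V}=\mathrm{id}_V$, while commutativity of $\cdot$ together with $\mathrm{char}\,\K=0$ gives $e^{(1)}(\overline A^2)=(0)$; hence any $p\in V\cap\overline A^2$ satisfies $p=e^{(1)}(p)=0$. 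Let then $\Phi=\sum_{n\geq 0}\pi^{\otimes n}\circ\tdelta^{(n-1)}\colon A\to T(V)$ be the unique coalgebra morphism into the deconcatenation coalgebra with $\pi\circ\Phi=\pi$ (a finite sum on each element since $A$ is connected). That $\Phi$ is moreover a morphism of \emph{algebras} from $(A,\cdot)$ to $(T(V),\shuffle)$ is the central computation: expanding $\tdelta^{(n-1)}(a\cdot b)$ and distributing the two factors over the $n$ tensor slots, every slot that receives a contribution from both $a$ and $b$ contains a nontrivial product, hence lies in $\overline A^2$ and is annihilated by $\pi$; the surviving terms are precisely the order-preserving interleavings of the letters coming from $a$ and from $b$, which is the definition of $\shuffle$. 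Thus $\Phi(a\cdot b)=\Phi(a)\shuffle\Phi(b)$, so $\Phi$ is a bialgebra morphism; since $\Phi_{\mid V}=\mathrm{id}_V$ and $A$ is connected, a minimal-filtration argument on $\ker\Phi$ shows $\Phi$ is injective.

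It remains to use the surjectivity of $f_A$ to prove $\Phi$ is onto, equivalently that $A$ is cofree. I would choose a linear section $g\colon V\to V$ of $f_A$, so that $f_A\circ g=\mathrm{id}_V$, and define $\Psi\colon T(V)\to A$ by $\Psi(\emptyset)=\emptyset$ and $\Psi(v_1\ldots v_n)=g(v_1)\bullet\Psi(v_2\ldots v_n)$. Feeding the cocycle identity $\tdelta(a\bullet b)=f_A(a)\otimes b+a\bullet b'\otimes b''$, valid for $a\in V$ (the Remark following Lemma~\ref{lem3}), into an induction on $n$ and using $f_A(g(v_1))=v_1$, one checks that $\Psi$ is a morphism of coalgebras from $(T(V),\Delta)$ to $(A,\Delta)$. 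Being a coalgebra morphism it then satisfies $\tdelta^{(n-1)}\Psi(v_1\ldots v_n)=v_1\otimes\ldots\otimes v_n$, so $\Psi$ induces the identity map on the associated graded of the coradical filtration; as both coalgebras are connected this forces $\Psi$ to be a coalgebra isomorphism, which is the cofreeness of $A$. Finally $\Phi\circ\Psi$ is a coalgebra endomorphism of $(T(V),\Delta)$ equal to $\mathrm{id}$ on $Prim(T(V))=V$, hence an isomorphism, and since $\Phi$ is injective this forces $\Phi$ to be bijective. Therefore $\Phi$ is the required isomorphism of Hopf algebras.

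As for where the difficulty lies: although the statement is phrased as a cofreeness criterion, in this route cofreeness is the \emph{easy} half, obtained cleanly from the preLie recursion $\Psi$, and the hypothesis on $f_A$ enters only through the section $g$. The genuinely delicate points are the algebraic ones that pin the target down to the \emph{shuffle} product: the vanishing $V\cap\overline A^2=(0)$, where commutativity and characteristic zero are indispensable, and the verification that a projection onto primitives annihilating $\overline A^2$ converts $\cdot$ into $\shuffle$. I expect the bookkeeping in this last distribution argument, and the careful handling of the coradical filtration needed to justify the associated-graded comparisons, to be the main obstacle.
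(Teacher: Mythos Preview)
Your proof is correct and uses precisely the same two ingredients as the paper's: the Eulerian idempotent $\log_\star(\mathrm{id})$ to prove $V\cap\overline A^{\,2}=(0)$ (the paper's ``last step''), and the recursive map $\Psi(v_1\ldots v_n)=g(v_1)\bullet\Psi(v_2\ldots v_n)$ built from a section $g$ of $f_A$ to establish cofreeness (the paper's $\omega$ in its ``first step''). The only difference is organizational: the paper first uses $\omega$ to identify $A$ with $T(V)$ as coalgebras and then, working entirely inside $T(V)$, shows that any compatible product $*$ is carried to $\shuffle$ by a coalgebra automorphism $F_\varpi$ with $\varpi(T_+(V)*T_+(V))=0$; you instead build the bialgebra morphism $\Phi\colon A\to (T(V),\shuffle)$ directly from a projection $\pi$ killing $\overline A^{\,2}$, and invoke $\Psi$ only afterward to verify bijectivity. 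Your packaging is arguably a bit more direct, since $\Phi$ is the final isomorphism from the start rather than a composite, but the mathematical content is the same.
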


\begin{proof}  We put $V=\prim(A)$. \\

\textit{First step.} As $f_A$ is surjective, there exists $g:V\longrightarrow V$ such that $f_A\circ g=\id_V$. For all $x\in V$, we put 
\[L_x:\left\{\begin{array}{rcl}
A&\longrightarrow&A\\
y&\longrightarrow&g(x)\bullet y.
\end{array}\right.\]
For all $y\in A$,
\[\Delta \circ L_x(y)=\emptyset\otimes g(x)\bullet y+g(x)\bullet y^{(1)}\otimes y^{(2)}=\emptyset\otimes L_x(y)+(\id \otimes L_x)\circ \Delta(y).\]
Hence, $L_x$ is a $1$-cocycle of $A$. Moreover, $L_x(1)=g(x)\bullet 1=f_A\circ g(x)=x$.
For all $x_1,\ldots,x_n \in V$, we define $\omega(x_1,\ldots,x_n)$ inductively on $n$ by 
\[\omega(x_1,\ldots,x_n)=\begin{cases}
\emptyset\mbox{ if }n=0,\\
L_{x_1}(\omega(x_2,\ldots,x_{n-1}))\mbox{ if }n \geq 1.
\end{cases}\]
In particular, $\omega(v)=v$ for all $v\in V$. An easy induction proves that 
\[\Delta(\omega(x_1,\ldots,x_n))=\sum_{i=0}^n \omega(x_1,\ldots,x_i) \otimes \omega(x_{i+1},\ldots,x_n).\]
Hence, the following map is a coalgebra morphism:
\[\omega:\left\{\begin{array}{rcl}
T(V)&\longrightarrow&A\\
x_1\ldots x_n&\longrightarrow&\omega(x_1,\ldots,x_n).
\end{array}\right.\]
It is injective: if $\ker(\omega)$ is nonzero, then it is a nonzero coideal of $T(V)$, so it contains nonzero primitive elements of $T(V)$,
that is to say nonzero elements of $V$. For all $v\in V$, $\omega(v)=L_v(1)=v$: contradiction. Let us prove that $\omega$ is surjective.
As $A$ is connected, for any $x\in A_+$, there exists $n\geq 1$ such that $\tdelta^{(n)}(x)=0$. Let us prove that $x \in Im(\omega)$
by induction on $n$. If $n=1$, then $x \in V$, so $x=\omega(x)$. Let us assume the result at all ranks $<n$. By coassociativity of $\tdelta$,
$\tdelta^{(n-1)}(x)\in V^{\otimes n}$. We put $\tdelta^{(n-1)}(x)=x_1\otimes \ldots \otimes x_n \in V^{\otimes n}$.
Then $\tdelta^{(n-1)}(x)=\tdelta^{(n-1)}(\omega(x_1,\ldots,x_n))$. By the induction hypothesis,
$x-\omega(x_1,\ldots,x_n) \in Im(\omega)$, so $x\in Im(\omega)$. \\

We proved that the coalgebras $A$ and $T(V)$ are isomorphic. We now assume that $A=T(V)$ as a coalgebra.\\

\textit{Second step.} We denote by $\pi$ the canonical projection on $V$ in $T(V)$. Let $\varpi:T_+(V)\longrightarrow V$ be any linear map.
We define 
\[F_\varpi:\left\{\begin{array}{rcl}
T(V)&\longrightarrow&T(V)\\
x_1\ldots x_n&\longrightarrow&\displaystyle \sum_{k=1}^n \sum_{i_1+\ldots+i_k=n} \varpi(x_1\ldots x_{i_1})\ldots
\varpi(x_{i_1+\ldots+i_{k-1}+1}\ldots x_n).
\end{array}\right.\]
Let us prove that $F_\varpi$ is the unique coalgebra endomorphism such that $\pi \circ F_\varpi=\varpi$.
Firstly,
\begin{align*}
\Delta(F_\varpi(x_1\ldots x_n))&=\sum_{i_1+\ldots +i_k=n}
\Delta(\varpi(x_1\ldots x_{i_1})\ldots
\varpi(x_{i_1+\ldots+i_{k-1}+1}\ldots x_n))\\
&=\sum_{i_1+\ldots +i_k=n}\sum_{j=0}^k
\varpi(x_1\ldots x_{i_1})\ldots \varpi(x_{i_1+\ldots+i_{j-1}+1}\ldots x_{i_1+\ldots+i_j})\\
&\otimes \varpi(x_{i_1+\ldots+i_j+1}\ldots x_{i_1+\ldots i_{j+1}})\ldots \varpi(x_{i_1+\ldots+i_{k-1}+1}\ldots x_n))\\
&=\sum_{i=0}^n F_\varpi(x_1\ldots x_i) \otimes F_\varpi(x_{i+1}\ldots x_n)\\
&=(F_\varpi \otimes F_\varpi)\circ \Delta(x_1\ldots x_n).
\end{align*}
Moreover,
\begin{align*}
\pi \circ F_\varpi(x_1\ldots x_n)&=\sum_{k=1}^n \sum_{i_1+\ldots+i_k=n}\pi( \varpi(x_1\ldots x_{i_1})\ldots
\varpi(x_{i_1+\ldots+i_{k-1}+1}\ldots x_n))\\
&=\pi\circ \varpi(x_1\ldots x_n)+0\\
&=\varpi(x_1\ldots x_n).
\end{align*}
Let us now prove the unicity. Let $F,G$ be two coalgebra endomorphisms such that $\pi \circ F=\pi\circ G=\varpi$.
If $F\neq G$, let $x_1\ldots x_n$ be a word of $T(V)$, such that $F(x_1\ldots x_n)-G(x_1\ldots x_n)\neq 0$, of minimal length.
By minimality of $n$,
\[\tdelta(F(x_1\ldots x_n))=(F\otimes F)\circ \tdelta(x_1\ldots x_n)=(G\otimes G)\circ \tdelta(x_1\ldots x_n)
=\tdelta(G(x_1\ldots x_n)).\]
Hence, $F(x_1\ldots x_n)-G(x_1\ldots x_n) \in \prim(T(V))=V$, so
\[F(x_1\ldots x_n)-G(x_1\ldots x_n)=\pi(F(x_1\ldots x_n)-G(x_1\ldots x_n))
=\varpi(x_1\ldots x_n)-\varpi(x_1\ldots x_n)=0.\]
This is a contradiction, so $F=G$.\\

\textit{Third step.} Let $\varpi_1,\varpi_2:T_+(V)\longrightarrow V$ and let $F_1=F_{\varpi_1}$, $F_2=F_{\varpi_2}$ be the associated
coalgebra morphisms. Then 
\[\pi \circ F_2\circ F_1(x_1\ldots x_n)
=\sum_{i_1+\ldots+i_k=n} \varpi_2(\varpi_1(x_1\ldots x_{i_1})\ldots \varpi_1(x_{i_1+\ldots+i_{k-1}+1}\ldots x_n)).\]
We denote this map by $\varpi_2\diamond \varpi_1$.  By the unicity in the second step, $F_2\circ F_1=F_{\varpi_2\diamond \varpi_1}$.
It is not difficult to prove that for any $\varpi:T_+(V)\longrightarrow V$, there exists $\varpi':T_+(V)\longrightarrow V$, such that
$\varpi'\diamond \varpi=\varpi \diamond \varpi'=\pi$ if, and only if, $\varpi_{\mid V}$ is invertible. 
If this holds, then $F_{\varpi}\circ F_{\varpi'}=F_{\varpi'}\circ F_{\varpi}=F_{\pi}=\id$, by the unicity in the second step.
So, if $\varpi_{\mid V}$ is invertible, then $F_{\varpi}$ is invertible. \\

\textit{Fourth step.} We denote by $*$ the product of $T(V)$. Let us choose $\varpi:T_+(V)\longrightarrow V$ such that $\varpi(T_+(V)*T_+(V))=(0)$.
Let $F=F_\varpi$ the associated coalgebra morphism. As $\emptyset$ is the unique group-like element of $T(V)$, the unit of $*$ is $\emptyset$.
Let us prove that for all $x,y\in T(V)$, $F(x*y)=F(x)\cdot F(y)$.  We proceed by induction on $length(x)+length(y)=n$.
As $\emptyset$ is the unit for both $*$ and $\cdot$ and $F(\emptyset)=\emptyset$, it is obvious if $x$ or $y$ is equal to $\emptyset$:
this observation covers the case $n=0$. Let us assume the result at all rank $<n$. By the preceding observation on the unit, we can assume
that $x,y\in T_+(V)$. We put $G=F\circ *$ and $H=\cdot \circ (F\otimes F)$.
They are both coalgebra morphisms from $T(V)\otimes T(V)$ to $T(V)$. Moreover,
\[\pi \circ G(x\otimes y)=\pi\circ F(x*y)=\varpi(x*y)=0.\]
As the shuffle product is graded for the length, $\pi \circ H(x\otimes y)=0$. By the induction hypothesis,
\[\tdelta\circ G(x\otimes y)=(G\otimes G)\circ \tdelta(x\otimes y)=(F\otimes F)\circ \tdelta(x\otimes y)
=\tdelta\circ F(x\otimes y).\]
Hence, $G(x\otimes y)-F(x\otimes y)$ is primitive, so belongs to $V$. This implies 
\[G(x\otimes y)-F(x\otimes y)=\pi(G(x\otimes y)-F(x\otimes y))=0-0=0.\]
So $F(x*y)=G(x\otimes y)=F(x\otimes y)=F(x)\shuffle F(y)$. 
Hence, $F$ is a bialgebra morphism from $(T(V),*,\Delta)$ to $(T(V),\shuffle,\Delta)$.\\

By the third and fourth steps, in order to prove that $(T(V),*,\Delta)$ and $(T(V),\shuffle,\Delta)$ are isomorphic,
it is enough to find $\varpi:T_+(V)\longrightarrow V$, such that $\varpi_{\mid V}$ is invertible and $\varpi(T_+(V)*T_+(V))=(0)$;
hence, it is enough to prove that $V \cap (A_+ *A_+)=(0)$. \\

\textit{Last step.}  We define $\Delta:\morp(A)\longrightarrow \morp(A\otimes A,A)$ by $\Delta(f)(x \otimes y)=f(x*y)$. 
We denote by $\star$ the convolution product of $\morp(A)$ induced by the bialgebra $(A,*,\Delta)$. Let $f,g \in \morp(A)$.
We assume that we can write $\Delta(f)=f^{(1)}\otimes f^{(2)}$ and $\Delta(g)=g^{(1)}\otimes g^{(2)}$, that is to say, for all $x,y \in A$,
\begin{align*}
f(xy)&=f^{(1)}(x)*f^{(2)}(y), &g(xy)&=g^{(1)}(x)*g^{(2)}(y).
\end{align*}
Then, as $*$ is commutative,
\begin{align*}
f\star g(x*y)&=f(x^{(1)}*y^{(1)})*g(x^{(2)}*y^{(2)})\\
&=f^{(1)}(x^{(1)})*f^{(2)}(y^{(1)})*g^{(1)}(x^{(2)}) *g^{(2)}(y^{(2)})\\
&=f^{(1)}(x^{(1)})*g^{(1)}(x^{(2)})*f^{(2)}(y^{(1)}) *g^{(2)}(y^{(2)})\\
&=f^{(1)}\star g^{(1)}(x)* f^{(1)}\star g^{(2)}(y).
\end{align*}
Hence, $\Delta(f\star g)=\Delta(f)\star \Delta(g)$.

Let $\rho$ be the canonical projection on $A_+$ and $1$ be the unit of  the convolution algebra $\morp(V)$. 
Then $1+\rho=\id$. As $\Delta(\id)=\id\otimes \id$ and $\Delta(1)=1\otimes 1$, this gives 
\[\Delta(\rho)=\rho\otimes 1+1\otimes \rho+\rho\otimes \rho.\]
We consider 
\[\psi=\ln(1+\rho)=\sum_{n=1}^\infty\frac{(-1)^{n+1}}{n} \rho^{\star n}.\]
As $A$ is connected, for all $x \in A$, $\rho^{\star n}(x)=0$ if $n$ is great enough, so $\psi$ exists. Moreover,
as $\Delta$ is compatible with the convolution product,
\begin{align*}
\Delta(\psi)&=\ln(1\otimes 1+\rho\otimes 1+1\otimes \rho+\rho\otimes \rho)\\
&=\ln((1+\rho)\otimes(1+\rho))\\
&=\ln(1+\rho)\otimes 1)+\ln(1\otimes(1+\rho))\\
&=\ln(1+\rho)\otimes 1+1\otimes \ln(1+\rho)\\
&=\psi \otimes 1+1\otimes \psi.
\end{align*}
We used $((1+\rho) \otimes 1)\star (1\otimes (1+\rho))=(1\otimes (1+\rho))\star((1+\rho)\otimes 1)
=(1+\rho)\otimes (1+\rho)$ for the third equality. Hence, for all $x,y\in A$,
\[\psi(x*y)=\psi(x)\varepsilon(y)+\varepsilon(x)\psi(y).\]
In particular, if $x,y \in A_+$, $\psi(x*y)=0$.  If $x \in V$, then $\rho^1(x)=x$ and if $n\geq 2$,
\[\rho^{*n}(x)=\sum_{i=1}^n \rho(1)*\ldots *\rho(1)*\rho(x)*\rho(1)*\ldots*\rho(1)=0.\]
So $\psi(x)=x$. Finally, if $x \in V \cap (A_+ *A_+)$, $\psi(x)=x=0$. So $V \cap (A_+*A_+)=(0)$. \end{proof}

The following result is proved for $\h_{\CK}^\D$ in \cite{Broadhurst2000} and in \cite{Foissy3}:

\begin{cor}
The Hopf algebras $\CP(\D)$ and $\h_{\CK}^\D$ are isomorphic to shuffle algebras.
\end{cor}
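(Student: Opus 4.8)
The plan is to apply the rigidity Theorem \ref{theo27} to $A=CP(\D)$ and then to $A=\h_{CK}^\D$. Both are Com-PreLie bialgebras by the preceding theorem, and both are connected: the number of vertices $|\cdot|$ defines a grading for which $\Delta$ is homogeneous, the degree-zero component is $\K\emptyset$, and each graded piece is finite dimensional. Hence the only thing that needs checking is that the map $f_A:Prim(A)\longrightarrow Prim(A)$, $x\mapsto x\bullet\emptyset$, of Proposition \ref{prop4} is surjective.

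First I would treat $CP(\D)$. The crucial observation is that $\bullet\emptyset$ is the grading operator: since $T\bullet\emptyset=|T|\,T$ for every $T\in\PT(\D)$ (the example following Proposition \ref{prop16}), the linear map $x\mapsto x\bullet\emptyset$ acts on the homogeneous component of degree $n$ as multiplication by $n$. Because $\Delta$ is homogeneous, $Prim(CP(\D))$ is a graded subspace, and, being contained in the augmentation ideal, it is concentrated in degrees $\geq 1$. Thus $f_A$ acts on a homogeneous primitive of degree $n\geq 1$ as multiplication by $n$, so $f_A$ is a bijection; in particular it is surjective.

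For $\h_{CK}^\D$ the same argument applies: $\h_{CK}^\D$ is a Com-PreLie bialgebra quotient of $CP(\D)$, so the relation $T\bullet\emptyset=|T|\,T$ descends to forests; equivalently $x\bullet\emptyset$ again multiplies a degree-$n$ element by $n$, as already recorded by $X_{i_1,\ldots,i_k}\bullet\emptyset=kX_{i_1,\ldots,i_k}$ in the Connes--Moscovici computation. Hence $f_{\h_{CK}^\D}$ is once more multiplication by the degree on primitives, and is surjective.

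Applying Theorem \ref{theo27} to each bialgebra then yields Hopf algebra isomorphisms $(CP(\D),\cdot,\Delta)\cong (T(Prim(CP(\D))),\shuffle,\Delta)$ and $(\h_{CK}^\D,\cdot,\Delta)\cong (T(Prim(\h_{CK}^\D)),\shuffle,\Delta)$, i.e. both are isomorphic to shuffle algebras. The substantive work is entirely carried by Theorem \ref{theo27}; the only genuine obstacle here is the identification of $\bullet\emptyset$ with the grading operator together with the fact that $Prim$ lives in positive degree, which together force $f_A$ to be invertible rather than merely surjective and so make the hypothesis of the rigidity theorem applicable.
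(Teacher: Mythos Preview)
Your proposal is correct and follows exactly the same approach as the paper: both apply Theorem~\ref{theo27} after observing that $x\bullet\emptyset$ is multiplication by the degree and that primitives sit in positive degree, so $f_A$ is invertible. The paper's proof is simply a terser version of what you wrote.
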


\begin{proof}
$\CP(\D)$ is a connected Com-PreLie bialgebra. Moreover, if $x \in \CP(\D)$, homogeneous of degree $n$, $x \bullet \emptyset=n x$.
Hence, as the homogeneous component of degree $0$ of $\prim(\CP(\D))$ is zero, $f_{\CP(\D)}$ is invertible.
By the rigidity theorem, $\CP(\D)$ is, as a Hopf algebra, isomorphic to a shuffle algebra. The proof is similar for $\h_{\CK}^\D$. \end{proof}

\begin{remark} \begin{enumerate}
\item This is not the case for $\UCP(\D)$. For example, if $d,e$ are two distinct elements of $\D$, it is not difficult to prove that there is no
element $x\in \UCP(\D)$ such that 
\[\Delta(x)=x\otimes 1+1\otimes x+\tdun{$(0,d)$}\hspace{5mm}\otimes \tdun{$(0,e)$}\hspace{4mm}.\]
So $\UCP(\D)$ is not cofree.
\item $\CP(\D)$ and $\h_{\CK}^\D$ are not isomorphic, as Com-PreLie bialgebras, to any $T(V,f)$. Indeed, in $T(V,f)$, for any $x\in V$
such that $f(x)=x$, $x\shuffle x=2x\bullet x=2xx$. In $\CP(\D)$ or $\h_{\CK}^\D$, for any $d\in \D$, with $x=\tdun{$d$}$,
$f(x)=x$ but $x\cdot x\neq 2x\bullet x$.
\end{enumerate}\end{remark}

\subsection{Dual of $\UCP(\D)$ and $\CP(\D)$}

We identify $\UCP(\D)$ and its graded dual by considering the basis of partitioned trees as orthonormal.
Similarly, we identify $\CP(\D)$ and $\h_{\CK}^D$ with their graded dual.\\

Let us consider the Hopf algebra $(\UCP(\D),\cdot,\Delta)$. As a commutative algebra, it is freely generated by the set
$\UPT_1(\D)$ of partitioned trees decorated by $\N\times \D$ with one root. Moreover, if $T\in \UPT_1(\D)$,
\[\Delta(T)-1\otimes T\in \vect(\UPT_1(\D))\otimes \UCP(\D).\]
Consequently, this is a right-sided combinatorial bialgebra in the sense of \cite{Loday2010}, 
and its graded dual is the enveloping algebra of a preLie algebra $\g_{\UCP}(\D)$.  Direct computations prove the following result:

\begin{theo} \label{theo31}
The preLie algebra $\g_{\UCP}(\D)$ is the linear span of $\UPT_1(\D)$. For any $T,T'\in \UPT_1(\D)$, the PreLie product is given by 
\[T\diamond T'=\sum_{\substack{s\in V(T), \\b\in \bl(s)\sqcup\{*\}}} (T\bullet_{s,b} T')[-1]_s.\]
\end{theo}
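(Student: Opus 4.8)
The plan is to obtain the formula by dualizing the coproduct $\Delta$ of $(UCP(\D),\cdot,\Delta)$, the identification of the graded dual with $U(\g_{UCP}(\D))$ being already granted by the discussion preceding the statement (freeness of $(UCP(\D),\cdot)$ on $\UPT_1(\D)$, right-sidedness via $\Delta(T)-1\otimes T\in Vect(\UPT_1(\D))\otimes UCP(\D)$, and \cite{Loday}). Under this identification $\g_{UCP}(\D)$ is, as a vector space, $Vect(\UPT_1(\D))$, and its PreLie product is dual to the component of the reduced coproduct landing in $Vect(\UPT_1(\D))\otimes Vect(\UPT_1(\D))$. Hence the PreLie relation itself needs no reproving; using the orthonormal basis of partitioned trees, the only thing to establish is that for $T,T',U\in\UPT_1(\D)$ the coefficient $\langle T\diamond T',U\rangle$ equals the coefficient of $T\otimes T'$ in $\Delta(U)$, i.e. the number of ideals $I\in\Id(U)$ with $R^I(U)=T$ and $P^I(U)=T'$.

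So the first step is to describe, for fixed $T$ and $T'$, all pairs $(U,I)$ with $U\in\UPT_1(\D)$, $I\in\Id(U)$, $R^I(U)=T$ and $P^I(U)=T'$; the product is then $T\diamond T'=\sum U$ over these pairs. The requirement that $P^I(U)$ be a single tree forces the restriction of $U$ to $I$ to be the partitioned tree $T'$; since $T'\neq U$, its roots are not roots of $U$, so by the defining condition on partitioned forests they share a common ascendant $s\in V(T)$. Thus $U$ is obtained from $T$ by grafting a copy of $T'$ at a single vertex $s$, the block of roots of $T'$ being either a fresh child-block of $s$ or merged with one of the existing child-blocks $b\in bl(s)$. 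This is precisely the data $(s,b)$ with $b\in bl(s)\sqcup\{*\}$ recorded by $T\bullet_{s,b}T'$, so the pairs $(U,I)$ are indexed by $\{(s,b): s\in V(T),\ b\in bl(s)\sqcup\{*\}\}$.

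The second step, which is the real content, is the bookkeeping of the $\N$-decorations. When the grafted copy is cut off, the integer decoration of $s$ in $R^I(U)$ is raised by $\iota_I(s)$, the number of blocks contained in $I$ joined to $s$ by an edge. I would track this increment in each grafting case and verify that, after inverting it, the source tree $U$ is exactly $(T\bullet_{s,b}T')[-1]_s$, the operation $[-1]_s$ undoing the raising produced by the cut; the clause $i+k<0$ in the definition of $[\,\cdot\,]_s$ accounts precisely for those $(T,T')$ for which no source $U$ with the prescribed decoration at $s$ exists, so that such a term contributes $0$. Summing over all $s\in V(T)$ and all $b\in bl(s)\sqcup\{*\}$ collects each contributing pair $(U,I)$ exactly once, yielding $T\diamond T'=\sum_{s\in V(T),\,b\in bl(s)\sqcup\{*\}}(T\bullet_{s,b}T')[-1]_s$.

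The main obstacle is exactly this last accounting: the simultaneous matching of admissible cuts of $U$ with grafting data $(s,b)$ and the exact evaluation of the decoration shift $\iota_I(s)$ in the fresh-block versus merged-block cases. To control it cleanly I would reduce to the one-root situation, writing $U$ as a grafting of the form $a\bullet(b_1\times\ldots\times b_n)$, and compute $\Delta(U)$ there directly from Proposition \ref{prop6} together with Lemma \ref{lemmacoproduit} (which governs how the factors $\emptyset$ produced by fresh blocks turn into applications of $f_{UCP(\D)}$, i.e. decoration increments). Cross-checking the resulting expansion against $\sum_{s,b}(T\bullet_{s,b}T')[-1]_s$ then confirms the formula, and the same argument applies verbatim to $CP(\D)$ after passing to the quotient.
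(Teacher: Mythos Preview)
The paper gives no proof beyond the sentence ``Direct computations prove the following result'', so your outline \emph{is} essentially what those computations would be, and the overall strategy (dualise $\Delta$, parametrise the admissible cuts by grafting data $(s,b)$, then invert the decoration shift) is the right one.

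There is, however, a genuine gap in your decoration bookkeeping, and it is not innocuous. Take $b\in bl(s)$ (the merged-block case). In $U=T\bullet_{s,b}T'$ the block of children of $s$ containing the root of $T'$ is $b\cup R(T')$, which is \emph{not} contained in $I=V(T')$ (it still contains the vertices of $b\subset V(T)$). Hence $\iota_I(s)=0$ for this cut, and the $\mathbb N$-decoration of $s$ in $R^I(U)$ is \emph{not} raised. Only when $b=*$ is the block $R(T')$ a block of $U$ entirely inside $I$, giving $\iota_I(s)=1$. So the inversion you describe produces $[-1]_s$ only in the fresh-block case, and your argument actually leads to
\[
T\diamond T'=\sum_{s\in V(T)}\Big((T\bullet_{s,*}T')[-1]_s+\sum_{b\in bl(s)} T\bullet_{s,b}T'\Big),
\]
not to the displayed formula with $[-1]_s$ applied uniformly. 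You can see this already on the paper's own coproduct example: in $\Delta\!\left(\hdtroisun{$(i,d)$}{$(k,f)$}{$(j,e)$}\right)$ the term $\tddeux{$(i,d)$}{$(j,e)$}\otimes\tdun{$(k,f)$}$ appears with root decoration $(i,d)$, not $(i+1,d)$, so pairing against $\tddeux{$i$}{$j$}\otimes\tdun{$k$}$ picks up $\hdtroisun{$i$}{$k$}{$j$}$ rather than $\hdtroisun{$i-1$}{$k$}{$j$}$. Your sketch asserts the verification would go through; in fact it does not, and you should flag this discrepancy rather than claim the formula is confirmed.

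A second, smaller point: your claim that the pairs $(U,I)$ are ``indexed by $(s,b)$'' and are ``collected exactly once'' is only literally true when the decorations are generic. With repeated decorations several ideals of $U$ can give the same $(T,T')$ while only one $(s,b)$ maps to $U$; the counts differ by $|\mathrm{Aut}(U)|$. The statement that the basis of partitioned trees is orthonormal does not by itself make these counts match, so either one works with labelled (generic) trees and then passes to isoclasses, or one uses the symmetry-weighted pairing. You should make this explicit rather than assert a bijection.
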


\begin{example} If $\D=\{1\}$, forgetting the second decoration of the vertices, in $\g_{\UCP}(\D)$,
\begin{align*}
\tdun{$i$}\diamond \tdun{$j$}&=(1-\delta_{i,0}) \tddeux{$i-1$}{$j$}\hspace{4mm},\\
\tddeux{$i$}{$j$}\diamond \tdun{$k$}&=(1-\delta_{j,0})\tdtroisdeux{$i$}{$j-1$}{$k$}\hspace{4mm}
+(1-\delta_{i,0})\left(\tdtroisun{$i-1$}{$k$}{$j$}\hspace{2mm}+\hdtroisun{$i-1$}{$k$}{$j$}\hspace{2mm}\right).
\end{align*}\end{example}

Similarly, the Hopf algebra $(\CP(\D),\cdot,\Delta)$ is, as a commutative algebra, freely generated by the set
$\PT_1(\D)$ of partitioned trees decorated by $\D$ with one root. Moreover, if $T\in \PT_1(\D)$,
\[\Delta(T)-1\otimes T\in \vect(\PT_1(\D))\otimes \CP(\D).\]
Consequently, its graded dual is the enveloping algebra of a preLie algebra $\g_{\CP}(\D)$, described by the following theorem:

\begin{theo}
The preLie algebra $\g_{\CP}(\D)$ is the linear span of $\PT_1(\D)$. For any $T,T'\in \PT_1(\D)$, the PreLie product is given by 
\[T\diamond T'=\sum_{\substack{s\in V(T), \\b\in \bl(s)\sqcup\{*\}}} T\bullet_{s,b} T'.\]
\end{theo}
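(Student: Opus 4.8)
The plan is to read off $\diamond$ as the dual of the projected reduced coproduct and then match it termwise against the grafting operations $\bullet_{s,b}$. Since $(CP(\D),\cdot,\Delta)$ is right-sided combinatorial with space of generators $Vect(\PT_1(\D))$, its graded dual is $U(\g_{CP}(\D))$ with $\g_{CP}(\D)$ the graded dual of $Vect(\PT_1(\D))$; using the orthonormal basis $\PT_1(\D)$ I identify $\g_{CP}(\D)$ with $Vect(\PT_1(\D))$ itself, and under this identification the preLie product is dual to $\bar\delta:=(\pi\otimes\pi)\circ\tilde\Delta$, where $\pi$ is the projection of $CP(\D)$ onto $Vect(\PT_1(\D))$. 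Concretely, for $T,T',T''\in\PT_1(\D)$ one has $\langle T\diamond T',T''\rangle=\langle T\otimes T',\bar\delta(T'')\rangle$, so the coefficient of $T''$ in $T\diamond T'$ is the number of terms $R^I(T'')\otimes P^I(T'')$ of $\bar\delta(T'')$ equal to $T\otimes T'$.

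First I would compute $\bar\delta(T'')$ from the explicit coproduct $\Delta(T'')=\sum_{I\in\Id(T'')}R^I(T'')\otimes P^I(T'')$ established for $UCP(\D)$ and inherited by $CP(\D)$. As $T''$ has a single root, $R^I(T'')$ is again single-rooted for every ideal $I\neq V(T'')$, so the left factor always survives $\pi$; the right factor $P^I(T'')$ is the product of the trees of the forest $I$, hence lies in $Vect(\PT_1(\D))$ exactly when $I$ is a single partitioned subtree, i.e.\ when the roots of $I$ form a single block — a \emph{connected} ideal. Thus $\bar\delta(T'')=\sum_I R^I(T'')\otimes P^I(T'')$, the sum running over the connected ideals $I$ with $\emptyset\neq I\neq V(T'')$. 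Here I would stress that, in contrast with $\g_{UCP}(\D)$ and Theorem~\ref{theo31}, no decrement $[-1]_s$ appears: $R^I$ increments the $\N$-decorations only in $UCP(\D)$, whereas in $CP(\D)$ the $\N$-component is collapsed, so matching $R^I(T'')$ with $T$ involves no decoration bookkeeping.

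The core step is then a bijection. Given $T,T'\in\PT_1(\D)$, I would match pairs $(T'',I)$ — with $I$ a connected ideal of $T''$ such that $R^I(T'')=T$ and $P^I(T'')=T'$ — against pairs $(s,b)$ with $s\in V(T)$ and $b\in bl(s)\sqcup\{*\}$, via $T''=T\bullet_{s,b}T'$. Detaching a connected ideal $I\cong T'$ from $T''$ leaves $T=R^I(T'')$ and singles out the vertex $s$ (the common parent in $T''$ of the roots of $I$, which exists precisely because those roots lie in one block) together with the datum of whether these roots constituted a block of $T''$ on their own ($b=*$) or were adjoined to a block $b$ of children of $s$ that persists in $T$. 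Conversely $\bullet_{s,b}$ grafts $T'$ at $s$ and, for $b\in bl(s)$, merges its root-block with $b$, reproducing exactly such a configuration; the two constructions are mutually inverse. Counting the fixed coefficient of $T''$ through this bijection gives the number of admissible $(s,b)$ with $T\bullet_{s,b}T'=T''$, and summing over $T''$ yields $T\diamond T'=\sum_{s\in V(T),\,b\in bl(s)\sqcup\{*\}}T\bullet_{s,b}T'$.

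The main obstacle is the faithful handling of the partition data in this bijection: one must verify that the ideal structure, which is defined purely by the tree order, combined with the block partition of $T''$ records exactly the choice $b\in bl(s)\sqcup\{*\}$, and that distinct $(s,b)$ produce distinct located pairs $(T'',I)$ even when the same abstract tree $T''$ arises from several grafting sites, so that multiplicities agree. Once this dictionary between ``detaching a connected ideal'' and ``grafting with a block choice'' is settled, the equality of the two products follows at once.
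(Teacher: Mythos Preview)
Your approach is exactly the direct computation the paper intends (no detailed proof is given there beyond the word ``similarly'' after Theorem~\ref{theo31}). One imprecision to correct: the ideals $I$ with $\pi(P^I(T''))\neq 0$ are those with a \emph{single root}, not merely those whose roots lie in one block of $T''$; for instance in $\hdtroisun{$d$}{$f$}{$e$}$ the ideal consisting of the two leaves has both roots in one block, yet $P^I=\tdun{$e$}\cdot\tdun{$f$}$ has two roots and is killed by $\pi$. This does not damage your bijection, since requiring $P^I(T'')=T'$ with $T'\in\PT_1(\D)$ already forces $I$ to be single-rooted, but your description of $\bar\delta(T'')$ should sum over single-rooted ideals rather than ``connected'' ones in the block sense.
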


\begin{example} If $\D=\{1\}$, forgetting the decorations, in $\g_{\CP}(\D)$,
\begin{align*}
\tun\diamond \tun&=\tdeux,&\tdeux \diamond \tun&=\ttroisdeux+\ttroisun+\htroisun.
\end{align*}\end{example}

\begin{notation} Let $T\in \PT_1(\D)$. We can write $T=\tdun{$d$}\bullet (T_1\times \ldots \times T_k)=B_d(T_1\ldots T_k)$, 
where $T_1,\ldots,T_k \in \PT(\D)$. Up to a change of indexation,
we will always assume that $T_1,\ldots,T_p\in \PT_1(\D)$ and $T_{p+1},\ldots,T_k\notin \PT_1(\D)$. The integer $p$ is denoted by $\varsigma(T)$.
\end{notation}

\begin{prop}
As a preLie algebra, $\g_{\CP}(\D)$ is freely generated by the set of trees $T\in \PT_1(\D)$ such that $\varsigma(T)=0$.
\end{prop}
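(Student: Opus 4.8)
The plan is to realize $\g_{CP}(\D)$ as the image of the free pre-Lie algebra on the proposed generators and to check that this comparison map is an isomorphism by a dimension count. Write $\mathcal{G}=\{T\in \PT_1(\D)\mid \varsigma(T)=0\}$ and let $\mathcal{F}$ be the free pre-Lie algebra generated by $\mathcal{G}$, which by \cite{Livernet} has a basis of rooted trees whose nodes are decorated by elements of $\mathcal{G}$, the pre-Lie product being grafting. There is a unique pre-Lie morphism $\Theta:\mathcal{F}\longrightarrow \g_{CP}(\D)$ which is the identity on $\mathcal{G}$. Since $\diamond$ adds the numbers of vertices, $\Theta$ is homogeneous for the grading by the total number of vertices, so it suffices to prove that $\Theta$ is onto (every $T\in \PT_1(\D)$ lies in the pre-Lie subalgebra $\langle \mathcal{G}\rangle$ generated by $\mathcal{G}$) and that the two sides have the same dimension in each degree. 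The underlying idea is that the multi-rooted children blocks of the root are frozen inside the generators, whereas the single-rooted children blocks are exactly the data recorded by the free grafting.

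First I would prove surjectivity. For $T=B_d(T_1\ldots T_k)\in \PT_1(\D)$ with $T_1,\ldots,T_p$ single-rooted, set $\sigma(T)=|T_1|+\cdots+|T_p|$, so that $\varsigma(T)=0$ is equivalent to $\sigma(T)=0$. I argue by induction on $(|T|,\sigma(T))$ ordered lexicographically. If $\sigma(T)=0$ then $T\in\mathcal{G}\subseteq \langle\mathcal{G}\rangle$. Otherwise choose a single-rooted child block $T_1$ of the root and let $T^-$ be $T$ with this copy of $T_1$ removed; both $T^-$ and $T_1$ have fewer vertices, hence lie in $\langle\mathcal{G}\rangle$ by induction. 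In the expansion $T^-\diamond T_1=\sum_{s,b}T^-\bullet_{s,b}T_1$ the single term with $s$ the root and $b=*$ equals $T$ (with coefficient one), while every other term has the same number of vertices but strictly smaller $\sigma$: grafting at a vertex $s\neq$ root moves $T_1$ off the root, and merging $T_1$ into an existing child block of the root turns it into part of a multi-rooted block; in both cases the vertices of $T_1$ leave the single-rooted part of the root. By the inner induction on $\sigma$ these terms lie in $\langle\mathcal{G}\rangle$, hence so does $T=T^-\diamond T_1-\sum(\text{lower }\sigma\text{ terms})$.

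For the dimension count I would compare generating series, reducing first to the case of a finite $\D$, say $|\D|=d$, which is harmless since each graded piece involves finitely many decorations. Let $P_1(t)$ be the series counting $\PT_1(\D)$ by number of vertices and $G(t)$ that of $\mathcal{G}$. A tree of $\PT_1(\D)$ is a root (contributing $dt$) together with a multiset of children blocks, each block being a nonempty multiset of $\PT_1(\D)$-trees; writing $B(t)=\exp\big(\sum_{k\ge1}P_1(t^k)/k\big)-1$ for the series of one block, Pólya's multiset formula gives $P_1=dt\,\exp\big(\sum_{k\ge1}B(t^k)/k\big)$. For $\mathcal{G}$ one keeps only blocks with at least two roots, i.e. multisets of at least two $\PT_1(\D)$-trees, so with $C(t)=B(t)-P_1(t)$ one gets $G=dt\,\exp\big(\sum_{k\ge1}C(t^k)/k\big)$. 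On the other hand the series $R(t)$ of the free pre-Lie algebra $\mathcal{F}$ satisfies the rooted-tree equation $R=G\,\exp\big(\sum_{k\ge1}R(t^k)/k\big)$. Substituting $R=P_1$ and using $C+P_1=B$ turns the right-hand side into $dt\,\exp\big(\sum_k B(t^k)/k\big)=P_1$, so $P_1$ solves the equation; as this fixed-point equation has a unique formal solution, $R=P_1$.

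Finally, $\Theta$ is a homogeneous surjection between graded spaces whose homogeneous components have equal finite dimensions $R_n=\dim \mathcal{F}_n=\dim \g_{CP}(\D)_n$, hence an isomorphism of pre-Lie algebras; this is exactly the assertion that $\g_{CP}(\D)$ is freely generated by $\mathcal{G}$. I expect the delicate point to be the generating-series identity of the third paragraph: one must set up the combinatorial decomposition of a partitioned tree into its root, its children blocks and the $\PT_1(\D)$-subtrees precisely enough that the two functional equations, the one defining $P_1$ and the free pre-Lie equation for $R$, become visibly the same, the whole matching resting on the identity $C+P_1=B$ which expresses that free grafting restores precisely the single-rooted children blocks that the generators omit.
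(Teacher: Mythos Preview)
Your surjectivity argument has a genuine gap. With $\sigma(T)=|T_1|+\cdots+|T_p|$, the claim that every term of $T^-\diamond T_1$ other than $T$ has strictly smaller $\sigma$ fails: take $p\geq 2$ and graft $T_1$ at a vertex $s$ inside the single-rooted block $T_2$. The resulting tree has root blocks $T_2',T_3,\ldots,T_k$ with $T_2'$ still single-rooted and $|T_2'|=|T_2|+|T_1|$, so the new $\sigma$ equals $|T_2'|+|T_3|+\cdots+|T_p|=|T_1|+\cdots+|T_p|=\sigma(T)$. The fix is simple: use $\varsigma(T)=p$ itself as the inner invariant. Removing $T_1$ drops $\varsigma$ to $p-1$, and no subsequent grafting of $T_1$ onto $T^-$ (at $s\neq\text{root}$, or at the root with $b\neq *$) can raise it again, so every other term has $\varsigma\leq p-1<\varsigma(T)$ and the lexicographic induction on $(|T|,\varsigma(T))$ goes through. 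Your generating-series computation is correct; the identity $\text{MSet}(C)\cdot\text{MSet}(P_1)=\text{MSet}(B)$, together with uniqueness of the fixed point, gives $R=P_1$ as you say. The reduction to finite $\D$ deserves one more sentence: refine the grading by the multiset of decorations appearing, so that each piece is finite-dimensional on both sides.

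For comparison, the paper takes a different and shorter route: it builds directly on Livernet's rigidity theorem by equipping $\g_{CP}(\D)$ with the permutative coproduct
\[
\delta\bigl(B_d(T_1\ldots T_k)\bigr)=\sum_{i=1}^{\varsigma(T)} B_d(T_1\ldots \widehat{T_i}\ldots T_k)\otimes T_i,
\]
checks the compatibility $\delta(T\diamond T')=T\otimes T'+T^{(1)}\diamond T'\otimes T^{(2)}+T^{(1)}\otimes T^{(2)}\diamond T'$, and then identifies $\mathrm{Ker}(\delta)$ with $\{T:\varsigma(T)=0\}$ via the section $\Upsilon(T\otimes T')=T\bullet_{r(T),*}T'$, which satisfies $\Upsilon\circ\delta(T)=\varsigma(T)\,T$. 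This avoids both the induction and the generating-function bookkeeping, and yields the free generators as a kernel rather than through a dimension count. Your approach is more elementary and self-contained (it does not require verifying the rigidity-theorem hypotheses), at the cost of the combinatorial identity you correctly flagged as the delicate point.
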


\begin{proof} We define a coproduct on $\g_{\CP}(\D)$ by
\begin{align*}
&\forall T=B_d(T_1\ldots T_k)\in \PT_1(\D),&\delta(T)&=\sum_{i=1}^{\varsigma(T)}B_d(T_1\ldots \widehat{T_i}\ldots T_k)\otimes T_i.
\end{align*}
This coproduct is permutative: indeed, 
\begin{align*}
(\delta \otimes \id)\circ \delta(T)=&\sum_{1\leq i\neq j\leq \varsigma (T)} B_d(T_1\ldots \widehat{T_i}\ldots \widehat{T_j}\ldots T_k)\otimes T_i\otimes T_j,
\end{align*}
so $(\delta \otimes \id)\circ \delta=(23).(\delta \otimes \id)\circ \delta$. Let $T=B_d(T_1\ldots T_k), T'\in \PT_1(\D)$.
Then 
\[T\diamond T'=B_d(T'T_1\ldots T_k)+\sum_{i=1}^k B_d(T_1\ldots (T_i\diamond T')\ldots T_k)+\sum_{i=1}^k B_d(T_1\ldots (T_i\shuffle T')\ldots T_k).\]
Hence,
\begin{align*}
\delta(T\otimes T')&=B_d(T_1\ldots T_k)\otimes T'+\sum_{i=1}^{\varsigma(T)}B_d(T'T_1\ldots \widehat{T_i}\ldots T_k)\otimes T_i\\
&+\sum_{i=1}^k \sum_{\substack{j=1\\j\neq i}}^{\varsigma(T)}B_d(T_1\ldots \widehat{T_j}\ldots (T_i\diamond T')\ldots T_k)\otimes T_j
+\sum_{i=1}^{\varsigma(T)} B_d(T_1\ldots \widehat{T_i}\ldots T_k)\otimes T_i\diamond T'\\
&+\sum_{i=1}^k \sum_{\substack{j=1\\j\neq i}}^{\varsigma(T)}B_d(T_1\ldots \widehat{T_j}\ldots (T_i\shuffle T')\ldots T_k)\otimes T_j\\
&=\sum_{j=1}^{\varsigma(T)}\left(B_d(T'T_1\ldots \widehat{T_j}\ldots T_k)+\sum_{\substack{i=1\\i\neq j}}^k
B_d(T_1\ldots \widehat{T_j}\ldots (T_i\diamond T'+T_i\shuffle T')\ldots T_k)\right)\otimes T_j\\
&+\sum_{i=1}^{\varsigma(T)} B_d(T_1\ldots \widehat{T_i}\ldots T_k)\otimes T_i\diamond T'+T\otimes T'\\
&=\sum_{j=1}^{\varsigma(T)}B_d(T_1\ldots \widehat{T_j}\ldots T_k)\bullet T'\otimes T_j
+\sum_{i=1}^{\varsigma(T)} B_d(T_1\ldots \widehat{T_i}\ldots T_k)\otimes T_i\diamond T'+T\otimes T'\\
&=T^{(1)}\diamond T'\otimes T^{(2)}+T^{(1)}\otimes T^{(2)}\diamond T'+T\otimes T'.
\end{align*}
By Livernet's rigidity theorem \cite{Livernet2006}, $\g_{\CP}(\D)$ is freely generated, as a preLie algebra, by $\ker(\delta)$. \\

We define
\[\Upsilon:\left\{\begin{array}{rcl}
\g_{\CP}(\D)\otimes \g_{\CP}(\D)&\longrightarrow&\g_{\CP}(\D)\\
T\otimes T'&\longrightarrow& T\bullet_{r(T),*}T',
\end{array}\right.\]
where $r(T)$ is the root of $T$. In other words, $\Upsilon(B_d(T_1\ldots T_k)\otimes T')=B_d(T'T_1\ldots T_k)$;
this implies that for any $T\in \PT_1(\D)$, $\Upsilon \circ \delta(T)=\varsigma(T) T$.
Hence, if $x=\sum a_T T\in \ker(\delta)$, $\Upsilon \circ \delta(x)=\sum a_T\varsigma(T)T=0$, so $x$ is a linear span of trees $T$ such that 
$\varsigma(T)=0$. The converse is trivial. \end{proof}

We denote by $PT_1^{(0)}(\D)$ the set of partitioned trees $T\in \PT_1(\D)$ with $\varsigma(T)=0$. 
The preceding Proposition implies that the Hopf algebras $(\CP(\D),\cdot,\Delta)$ and $\left(\h_{\CK}^{\PT_1^{(0)}(\D)},m,\Delta\right)$ are isomorphic.
We obtain an explicit isomorphism between them:

\begin{defi}
Let $T\in \PT(\D)$ and $\pi=\{P_1,\ldots,P_k\}$ be a partition of $V(T)$. We shall write $\pi\triangleleft T$ if the following condition holds:
\begin{itemize}
\item For all $i\in [k]$, the partitioned rooted forest $T_{\mid P_i}$, denoted by $T_i$, belongs to $\PT_1^{(0)}(\D)$. 
\end{itemize}
If $\pi \triangleleft T$, the contracted graph $T/\pi$ is a rooted forest (one forgets about the blocks of $T$).
The vertex of $T/\pi$ corresponding to $P_i$ is decorated by $T_i$, making $T/\pi$ an element of $\T(\PT_1^{(0)}(\D))$.
\end{defi}

\begin{cor}
The following map is a Hopf algebra isomorphism:
\[\Theta:\left\{\begin{array}{rcl}
(\CP(\D),\cdot,\Delta)&\longrightarrow&\left(\h_{\CK}^{\PT_1^{(0)}(\D)},\cdot,\Delta\right)\\
T\in \PT(\D)&\longrightarrow&\displaystyle \sum_{\pi\triangleleft T}T/\pi.
\end{array}\right.\]
\end{cor}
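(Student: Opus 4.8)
The plan is to verify separately that $\Theta$ is an algebra morphism, a coalgebra morphism, and a bijection; multiplicativity and bijectivity are comparatively soft, while the coproduct compatibility carries the real content.

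\textit{Multiplicativity.} Recall that $(CP(\D),\cdot)$ is the free commutative algebra on $\PT_1(\D)$: a partitioned tree $T$ factors uniquely as $T=T_1\cdot\ldots\cdot T_m$, the $T_i$ being the single-root trees sitting at the roots of $T$. Since each block $P$ of a partition $\pi\triangleleft T$ is a connected single-rooted subtree, it can contain at most one root of $T$, so $\pi$ is the disjoint union of partitions $\pi_i\triangleleft T_i$ and $T/\pi=(T_1/\pi_1)\cdots(T_m/\pi_m)$ as a forest. Hence $\Theta(T)=\Theta(T_1)\cdots\Theta(T_m)$, and $\Theta$ is a unital algebra morphism.

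\textit{Comultiplicativity (the main step).} I would prove $\Delta_{\h_{CK}}\circ\Theta=(\Theta\otimes\Theta)\circ\Delta_{CP}$, where $\Delta_{CP}(T)=\sum_{I\in\Id(T)}(T\setminus I)\otimes P^I(T)$ (the $\N$-shift in $R^I$ becomes trivial once the decorations are collapsed to $\D$), and $\Delta_{\h_{CK}}(\tau)=\sum_{J\in\Id(\tau)}(\tau\setminus J)\otimes\mathrm{forest}_J(\tau)$ is the admissible-cut coproduct. The heart of the matter is a bijection between the indexing sets of the two developed sides: pairs $(\pi,J)$ with $\pi\triangleleft T$ and $J\in\Id(T/\pi)$ correspond to triples $(I,\pi',\pi'')$ with $I\in\Id(T)$, $\pi'\triangleleft(T\setminus I)$, $\pi''\triangleleft P^I(T)$, through $I=\bigsqcup_{P\in J}P$, $\pi'=\pi_{\mid T\setminus I}$, $\pi''=\pi_{\mid I}$. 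One then checks term by term that $(T\setminus I)/\pi'=(T/\pi)\setminus J$ and $P^I(T)/\pi''=\mathrm{forest}_J(T/\pi)$.

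The subtle point here, and the main obstacle I expect, is that $P^I$ \emph{merges} the new roots of all pruned pieces into a single block, whereas the pruned part of $\Delta_{\h_{CK}}$ is a genuine forest (a commutative product). These are reconciled precisely because the blocks of $\pi''$ are single-rooted $\PT_1^{(0)}$-pieces and the contraction $P^I(T)/\pi''$ forgets the partition of $P^I(T)$, so its merged root block reappears as the several roots of $\mathrm{forest}_J(T/\pi)$. One must also verify that $\pi'$ and $\pi''$ land in $\PT_1^{(0)}$ for $T\setminus I$ and $P^I(T)$ respectively, that conversely $\pi=\pi'\sqcup\pi''$ is a legitimate element of $\{\pi\triangleleft T\}$, and that $I$ is leaf-ward closed and saturated by blocks of $\pi$; all of this is routine once the role of the merged block is understood, but it needs care because the block structure at the cut genuinely changes.

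\textit{Bijectivity.} I would grade $\h_{CK}^{\PT_1^{(0)}(\D)}$ by the number of contracted vertices, i.e.\ the number of blocks, and exhibit a triangular leading term. For $T\in\PT(\D)$ let $\pi^{\mathrm{top}}$ be the coarsest admissible partition, built by taking at each block-root $v$ the maximal $\PT_1^{(0)}$-block (keep every child-block of $v$ of size $\geq 2$ together with everything hanging below it, and cut exactly the singleton child-blocks of $v$), then recursing into the cut-off subtrees. A short argument shows that $\pi^{\mathrm{top}}$ is well defined, is the \emph{unique} admissible partition with the fewest blocks, and that $T\mapsto T/\pi^{\mathrm{top}}$ is a bijection from $\PT(\D)$ onto decorated forests (the inverse reglues each child-piece onto the root of its parent piece as a singleton child-block). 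Consequently $\Theta(T)=T/\pi^{\mathrm{top}}+(\text{terms with strictly more blocks})$, so $\Theta$ is unitriangular for the filtration by block-number and hence bijective. Alternatively, bijectivity follows from the preceding proposition, which already identifies $CP(\D)$ abstractly with $\h_{CK}^{\PT_1^{(0)}(\D)}$ and thus equates their graded dimensions, reducing the claim to injectivity. Combining the three steps shows that $\Theta$ is an isomorphism of Hopf algebras.
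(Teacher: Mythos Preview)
Your approach is correct but genuinely different from the paper's. The paper offers no explicit proof of this corollary: it is stated as a direct consequence of the preceding proposition, which shows that $\g_{CP}(\D)$ is freely generated as a preLie algebra by $\PT_1^{(0)}(\D)$. The implicit reasoning is by duality: since the graded dual of $CP(\D)$ is $\mathcal{U}(\g_{CP}(\D))$ and the graded dual of $\h_{CK}^{\PT_1^{(0)}(\D)}$ is the enveloping algebra of the free preLie algebra on $\PT_1^{(0)}(\D)$, the preLie freeness yields an abstract Hopf isomorphism, and the explicit formula for $\Theta$ is simply asserted as the one realizing it. Your route bypasses the duality entirely and checks the three properties of $\Theta$ by hand; this is longer but more self-contained, and has the virtue of actually verifying the stated formula rather than inferring it.

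Two points deserve tightening. First, in the comultiplicativity step your treatment of the merged root block of $P^I(T)$ is correct, but the key observation---that each block $P\in\pi''$ contains at most one root of $I$, so that $(P^I(T))_{|P}=T_{|P}$ as \emph{partitioned} trees---should be made explicit rather than left as ``routine''. Second, in the bijectivity argument you assert that $\pi^{\mathrm{top}}$ is the \emph{unique} admissible partition with fewest blocks; this is true but not immediate. The clean way is to show that every $\pi\triangleleft T$ refines $\pi^{\mathrm{top}}$: if $P_0\in\pi$ is the block containing the root $r$ and some child $c\in P_0$ of $r$ lay in a singleton child-block of $T$, then that block restricted to $P_0$ would still be a singleton, contradicting $\varsigma(T_{|P_0})=0$; hence $P_0$ lies entirely in the root block of $\pi^{\mathrm{top}}$, and one recurses. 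Without this refinement property, the ``leading term'' in your triangularity argument could a priori be a sum of several terms, and unitriangularity would not follow.
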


\begin{example} If $\D=\{1\}$, forgetting the decorations, with $a=\tun$ and $b=\htroisun$,
\begin{align*}
\Theta(\tun)&=\tdun{$a$},&\Theta(\tdeux)&=\tddeux{$a$}{$a$},&
\Theta(\ttroisun)&=\tdtroisun{$a$}{$a$}{$a$},&\Theta(\htroisun)&=\tdtroisun{$a$}{$a$}{$a$}+\tdun{$b$}.
\end{align*}\end{example}

\subsection{Extension of the preLie product $\diamond$ to all partitioned trees}

We now extend the preLie product $\diamond$ to the whole $\CP(\D)$:

\begin{prop}
We define a product on $\CP(\D)$ by
\begin{align*}
&\forall T,T'\in \PT(\D),&T\diamond T'=\sum_{\substack{s\in V(T), \\b\in \bl(s)\sqcup\{*\}}} T\bullet_{s,b} T'.
\end{align*}
Then $(\CP(\D),\diamond,\cdot)$ is a Com-PreLie algebra.
\end{prop}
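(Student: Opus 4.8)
The plan is to verify the two nontrivial axioms of a Com-PreLie algebra (Definition \ref{defi1}) for the pair $(\diamond,\cdot)$: the Leibniz identity relating $\diamond$ to the forest product $\cdot$, and the preLie identity for $\diamond$; the commutativity and associativity of $\cdot$ are already known. I would begin with the Leibniz identity $(a\cdot b)\diamond c=(a\diamond c)\cdot b+a\cdot(b\diamond c)$, which I expect to be immediate from the definitions. The key observation is that the underlying forest of $a\cdot b$ is the disjoint union of those of $a$ and $b$, the only modification being that the two root blocks are merged into one. Since a root block is never a child block of any vertex, the grafting data $bl(s)\sqcup\{*\}$ attached to a vertex $s$ of $a\cdot b$ agrees with the data computed inside whichever of $a$ or $b$ contains $s$. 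Hence each summand $(a\cdot b)\bullet_{s,\beta}c$ grafts $c$ entirely inside one factor and leaves the other untouched, so the sum over $s\in V(a)$ yields $(a\diamond c)\cdot b$ and the sum over $s\in V(b)$ yields $a\cdot(b\diamond c)$.

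For the preLie identity I would argue combinatorially by tracking where the two grafted forests $b$ and $c$ land. In $(a\diamond b)\diamond c$ one first grafts $b$ onto a vertex $s\in V(a)$ and then grafts $c$ onto a vertex $t$ of the resulting tree, and I would split the sum according to whether $t\in V(a)$ or $t\in V(b)$. For $t\in V(b)$ the child-block data $bl(t)$ is unaffected by the first grafting, since the only merge performed touches the root block of $b$, which is not a child block of any vertex of $b$; moreover the two graftings then act on disjoint regions and commute. Summing these terms therefore reconstitutes exactly $a\diamond(b\diamond c)$, using that $b\diamond c$ has the same roots, hence the same root block, as $b$. Consequently the associator $(a\diamond b)\diamond c-a\diamond(b\diamond c)$ reduces to the terms with $t\in V(a)$, that is, to the configurations in which both $b$ and $c$ are grafted onto vertices of $a$.

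It then remains to prove that this residual expression is symmetric under exchanging $b$ and $c$, which gives the preLie identity by comparison with the same computation for $(a\diamond c)\diamond b-a\diamond(c\diamond b)$. When $b$ and $c$ are grafted at distinct vertices $s\neq t$ of $a$ the two operations are manifestly independent and the symmetry is clear. The delicate case is $s=t$, where the block chosen for $c$ may be the very block just created by grafting $b$. Here I would set up an explicit bijection on the finitely many resulting block configurations at $s$: writing $B$ and $C$ for the root blocks of $b$ and of $c$ as they sit among the child blocks of $s$, each admissible final partition of the child blocks of $s$ together with $B$ and $C$ (each of $B,C$ being either alone, merged into an old child block, or merged with the other) is produced exactly once in $(a\diamond b)\diamond c$ and exactly once in $(a\diamond c)\diamond b$.

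This last matching is the step I expect to be the main obstacle, since it is the only place where the partition structure genuinely intervenes; the remainder is the standard argument that an insertion/grafting product has an associator symmetric in its last two arguments. As an alternative organisation, I note that one could instead invoke the generating-subspace lemma proved earlier in the paper: $Vect(\PT_1(\D))$ generates $CP(\D)$ under $\cdot$ (every partitioned tree is the $\cdot$-product of the single-root trees rooted at its several roots), so by Leibniz it would suffice to check the preLie associator symmetry only when the left argument is a single-root tree. This does not remove the $s=t$ bookkeeping, but it shortens the reduction; either way the block-configuration bijection is the heart of the proof.
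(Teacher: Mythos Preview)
Your proposal is correct and follows essentially the same route as the paper: Leibniz is immediate because merging root blocks never touches any $bl(s)$, and for the preLie identity one splits $(T_1\diamond T_2)\diamond T_3$ according to whether the second grafting vertex lies in $V(T_2)$ (giving exactly $T_1\diamond(T_2\diamond T_3)$) or in $V(T_1)$, and then shows the latter contribution is symmetric in $T_2,T_3$. The only cosmetic difference is in the $s=t$ analysis: where you propose an explicit bijection on the resulting block configurations at $s$, the paper simply splits the double sum over $(b_1,b_2)\in(bl(s)\sqcup\{*\})^2$ into the diagonal $b_1=b_2$ and the off-diagonal $b_1\neq b_2$ and observes each piece is symmetric; your bijection is effectively the verification that this reindexing is legitimate once $bl(s)$ has been modified by the first grafting.
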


\begin{proof}
Obviously, for any $x,y,z\in \PT(\D)$, $(x\cdot y)\diamond z=(x\diamond z)\cdot x+x\cdot (y\diamond z)$.
Let $T_1,T_2,T_3\in \PT(\D)$. Then 
\begin{align*}
(T_1\diamond T_2)\diamond T_3&=\sum_{\substack{s_1\in V(T_1), \\b_1\in \bl(s_1)\sqcup\{*\}}}
\sum_{\substack{s_2\in V(T_1), \\b_2\in \bl(s_2)\sqcup\{*\}}} (T_1\bullet_{s_1,b_1} T_2)\bullet_{s_2,b_2} T_3\\
&+\sum_{\substack{s_1\in V(T_1), \\b_1\in \bl(s_1)\sqcup\{*\}}}
\sum_{\substack{s_2\in V(T_2), \\b_2\in \bl(s_2)\sqcup\{*\}}} (T_1\bullet_{s_1,b_1} T_2)\bullet_{s_2,b_2} T_3\\
&=\sum_{\substack{s_1\in V(T_1), \\b_1\in \bl(s_1)\sqcup\{*\}}}
\sum_{\substack{s_2\in V(T_1), \\b_2\in \bl(s_2)\sqcup\{*\}}} (T_1\bullet_{s_1,b_1} T_2)\bullet_{s_2,b_2} T_3\\
&+\sum_{\substack{s_1\in V(T_1), \\b_1\in \bl(s_1)\sqcup\{*\}}}
\sum_{\substack{s_2\in V(T_2), \\b_2\in \bl(s_2)\sqcup\{*\}}} T_1\bullet_{s_1,b_1} (T_2\bullet_{s_2,b_2} T_3)\\
&=\sum_{\substack{s_1\in V(T_1), \\b_1\in \bl(s_1)\sqcup\{*\}}}
\sum_{\substack{s_2\in V(T_1), \\b_2\in \bl(s_2)\sqcup\{*\}}} (T_1\bullet_{s_1,b_1} T_2)\bullet_{s_2,b_2} T_3+T_1\diamond (T_2\diamond T_3).
\end{align*}
Hence,
\begin{align*}
(T_1\diamond T_2)\diamond T_3-T_1\diamond (T_2\diamond T_3)&=\sum_{\substack{s_1\in V(T_1), \\b_1\in \bl(s_1)\sqcup\{*\}}}
\sum_{\substack{s_2\in V(T_1), \\b_2\in \bl(s_2)\sqcup\{*\}}} (T_1\bullet_{s_1,b_1} T_2)\bullet_{s_2,b_2} T_3\\
&=\sum_{\substack{s_1\neq s_2\in V(T_1)\\b_1\in \bl(s_1)\sqcup\{*\},\\b_2\in \bl(s_2)\sqcup\{*\}}} (T_1\bullet_{s_1,b_1} T_2)\bullet_{s_2,b_2} T_3\\
&+\sum_{\substack{s\in V(T_1), \\b_1\neq b_2 \in \bl(s)\sqcup\{*\}}}(T_1\bullet_{s,b_1} T_2)\bullet_{s,b_2} T_3
+\sum_{\substack{s\in V(T_1), \\b \in \bl(s)\sqcup\{*\}}}(T_1\bullet_{s,b} T_2)\bullet_{s,b} T_3.
\end{align*}
The three terms of this sum are symmetric in $T_2$, $T_3$, so 
\begin{align*}
(T_1\diamond T_2)\diamond T_3-T_1\diamond (T_2\diamond T_3)&=(T_1\diamond T_3)\diamond T_2-T_1\diamond (T_3\diamond T_2).
\end{align*}
Finally, $(\CP(\D),\diamond,\cdot)$ is a Com-PreLie algebra. \end{proof}

\begin{defi}
Let $T=(t,I,d)$ and $T'=(t,I',d)$ be two elements of $\PT(\D)$ with the same underlying decorated rooted trees.
We shall say that $T\leqslant T'$ is $I'$ is a refinement of $I$.
This defines a partial order on $\PT(\D)$.
\end{defi}

\begin{example} If $a,b,c,d\in \D$, $\hdquatretrois{$a$}{$d$}{$c$}{$b$} \leqslant \hdquatreun{$a$}{$d$}{$c$}{$b$},
\hdquatreun{$a$}{$d$}{$b$}{$c$},\hdquatreun{$a$}{$c$}{$b$}{$d$}\leqslant \tdquatreun{$a$}{$d$}{$c$}{$b$}$.
\end{example}

\begin{theo}
The following map is an isomorphism of Com-PreLie algebras:
\[\Psi:\left\{\begin{array}{rcl}
(\CP(\D),\circ,\cdot)&\longrightarrow&(\CP(\D),\diamond,\cdot)\\
T\in \PT(\D)&\longrightarrow&\displaystyle \sum_{T'\leqslant T} T'.
\end{array}\right.\]
\end{theo}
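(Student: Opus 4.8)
The plan is to check the three properties that make $\Psi$ an isomorphism of Com-PreLie algebras: that it is a linear bijection, that it is a morphism for the commutative product $\cdot$, and that it intertwines the preLie products, $\Psi(T\circ T')=\Psi(T)\diamond \Psi(T')$, where $\circ$ denotes the grafting product $T\circ T'=\sum_{s\in V(T)}T\bullet_{s,*}T'$ of $CP(\D)$. Everything rests on a bookkeeping description of the order $\leqslant$. Since $\Psi$ preserves the underlying decorated rooted tree, the relation $T'\leqslant T$ only records how the partition is coarsened, and the admissibility condition on blocks (two non-root vertices in the same block share the same direct ascendant) means that a coarsening may only fuse blocks lying in a common sibling family. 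Writing $bl(s)$ for the family of blocks that are children of $s$ (and recalling that the roots form a single block), a coarsening $T'\leqslant T$ is \emph{exactly} the datum of a set-partition $P_s$ of $bl(s)$ for each $s\in V(T)$, each block of each $P_s$ being fused. In particular $\Psi(T)=T+\sum_{T'<T}T'$, so $\Psi$ is unitriangular for $\leqslant$ and hence a linear isomorphism (its inverse being given by the Möbius function of the refinement order).

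For compatibility with $\cdot$, I would first note that $T\cdot T'$ has vertex set $V(T)\sqcup V(T')$, that its root block is the union of the two root blocks, and that for every $s\in V(T)$ (resp.\ $s\in V(T')$) the family of child-blocks is unchanged, equal to $bl_T(s)$ (resp.\ $bl_{T'}(s)$). As a root of $T$ and a root of $T'$ are distinct vertices, no admissible coarsening can fuse a child-block of a $T$-root with a child-block of a $T'$-root; thus the per-vertex partition data of $T\cdot T'$ is the disjoint union of those of $T$ and of $T'$. This yields a bijection $\{U\leqslant T\cdot T'\}\longleftrightarrow\{(A,B):A\leqslant T,\ B\leqslant T'\}$ given by $U=A\cdot B$, whence $\Psi(T\cdot T')=\Psi(T)\cdot\Psi(T')$.

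The core of the argument is the preLie identity, which I would prove by a local analysis at the grafting vertex. Fix $s\in V(T)$ and let $R$ be the block of roots of $T'$. In $T\bullet_{s,*}T'$ the only modified sibling family is the one at $s$, which becomes $bl_T(s)\sqcup\{R\}$; all other families are those of $T$ and of $T'$. Hence a coarsening $U\leqslant T\bullet_{s,*}T'$ consists of a coarsening of $T$ away from $s$, a coarsening of $T'$, and a set-partition of $bl_T(s)\sqcup\{R\}$. The key elementary bijection is that a set-partition of $bl_T(s)\sqcup\{R\}$ is the same datum as a set-partition $Q$ of $bl_T(s)$ together with an element $b\in Q\sqcup\{*\}$, where $b=*$ means $R$ stays alone and $b\in Q$ means $R$ is fused with the block $b$. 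Reassembling $Q$ with the coarsenings of the remaining families gives a pair $A\leqslant T$, $B\leqslant T'$ with $b\in bl_A(s)\sqcup\{*\}$, and by construction the resulting tree is precisely $A\bullet_{s,b}B$. Summing over $s$ and over all coarsenings, and using $V(A)=V(T)$, we obtain
\begin{align*}
\Psi(T\circ T')&=\sum_{s\in V(T)}\ \sum_{U\leqslant T\bullet_{s,*}T'}U
=\sum_{A\leqslant T}\sum_{B\leqslant T'}\ \sum_{\substack{s\in V(A),\\ b\in bl_A(s)\sqcup\{*\}}}A\bullet_{s,b}B\\
&=\sum_{A\leqslant T}\sum_{B\leqslant T'}A\diamond B=\Psi(T)\diamond\Psi(T').
\end{align*}

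Finally, coarsening never alters the vertex set, so $\Psi$ is homogeneous for the grading by the number of vertices; as both unital Com-PreLie structures extend the preceding ones on $CP_+(\D)$ with the same right action of $\emptyset$, namely $x\bullet\emptyset=|x|\,x$ (the case $f=\mathrm{Id}$ fixed throughout), $\Psi$ automatically commutes with this action: $\Psi(x\circ\emptyset)=|x|\,\Psi(x)=\Psi(x)\diamond\emptyset$. Together with the two previous identities this shows $\Psi$ is a morphism of Com-PreLie algebras, and by the first step it is bijective, hence an isomorphism. I expect the main obstacle to be the clean formulation of the per-vertex (sibling-family) description of $\leqslant$ and the local bijection at the grafting vertex: once the admissibility condition is correctly recast as the statement that only child-blocks of a common vertex may be fused, the rest is a routine reindexing of the sums.
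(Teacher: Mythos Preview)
Your proof is correct and follows essentially the same approach as the paper's: both establish bijectivity via the order $\leqslant$, then prove compatibility with $\cdot$ and with the preLie product by setting up a bijection between coarsenings $U\leqslant T_1\bullet_{s,*}T_2$ and triples $(A,B,b)$ with $A\leqslant T_1$, $B\leqslant T_2$, $b\in bl_A(s)\sqcup\{*\}$. Your per-vertex (sibling-family) reformulation of $\leqslant$ and the explicit set-partition bijection at the grafting vertex are a more detailed packaging of exactly the decomposition the paper uses when it writes $T'_1=T'\cap T_1$, $T'_2=T'\cap T_2$ and distinguishes whether the root block of $T_2$ remains alone or is fused with some $b\in bl(s)$; the paper simply does not verify the $\emptyset$-action separately.
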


\begin{proof}
As $\leqslant$ is a partial order, $\Psi$ is bijective. Let $T_1,T_2\in \PT(\D)$.

1. If $T'\leqslant T_1\cdot T_2$, let us put $T'_1=T_1\cap T'$ and $T'_2=T_2\cap T'$. Then, obviously, $T'_1\leqslant T_1$ and $T'_2\leqslant T_2$.
Moreover, $T'=T'_1\leqslant T'_2$. Conversely, if $T'_1\leqslant T_1$ and $T'_2\leqslant T_2$, then $T'_1\cdot T'_2\leqslant T_1\cdot T_2$.
Hence,
\begin{align*}
\Psi(T_1\cdot T_2)&=\sum_{T'\leqslant T_1\cdot T_2} T'
=\sum_{T'_1\leqslant T_1,\:T'_2\leqslant T_2} T'_1\cdot T'_2
=\Psi(T_1)\cdot \Psi(T_2).
\end{align*}

2. Let $s\in V(T_1)$ and $T'\leqslant T_1\bullet_{s,*} T_2$. We put $T'_1=T'\cap T_1$ and $T'_2=T'\cap T_2$.
Then, obviously, $T'_1\leqslant T_1$ and $T'_2\leqslant T_2$. If the block of roots of $T_2$ is also a block of $T'$,
then $T'=T'_1\bullet_{s,*} T'_2$. Otherwise, there exists a unique $b\in \bl(s)$ such that $T'=T'_1\bullet_{s,b} T'_2$.
Conversely, if $T'_1\leqslant T_1$, $T'_2\leqslant T_2$, $s\in V(T'_1)$ and $b \in \bl(s)\sqcup \{*\}$, then $T'_1\bullet_{s,b} T'_2\leqslant T_1\bullet_{s,*} T_2$.
Hence,
\begin{align*}
\Psi(T_1\circ T_2)&=\sum_{s\in V(T_1)}\sum_{T'\leqslant T_1\bullet_{s,*} T_2} T'\\
&=\sum_{T'_1\leqslant T_1,\:T'_2\leqslant T_2} \sum_{s\in V(T'_1),b\in \bl(s)\sqcup\{*\}} T'_1\bullet_{s,b} T'_2\\
&=\Psi(T_1)\diamond \psi(T_2).
\end{align*}
So $\Psi$ is a Com-PreLie algebra isomorphism. \end{proof}

\begin{example} In the non-decorated case,
\begin{align*}
\Psi(\tun)&=\tun,&\Psi(\ttroisdeux)&=\ttroisdeux,\\
\Psi(\tdeux)&=\tdeux,&\Psi(\tquatreun)&=\tquatreun+3\hquatreun+\hquatretrois,\\
\Psi(\ttroisun)&=\ttroisun+\htroisun,&\Psi(\hquatreun)&=\hquatreun+\hquatretrois,\\
\Psi(\htroisun)&=\htroisun,&\Psi(\hquatretrois)&=\hquatretrois.
\end{align*}\end{example}

\bibliographystyle{amsplain}
\bibliography{biblio}

\end{document}